\newcommand{\z}{^{\scalebox{.7}{$\scriptstyle (0)$}}} 
\newcommand{\inv}{^{\scalebox{.7}{$\scriptstyle -1$}}}
\newcommand{\comp}{^{\scalebox{.7}{$\scriptstyle (2)$}}}
\newcommand{\restrict}[2]{{
		#1%
		\left.%
		\kern-\nulldelimiterspace
		\vphantom{|}
		\right|_{#2}
}}
\DeclareMathOperator{\supp}{supp}
\DeclareMathOperator{\Prim}{Prim}
\DeclareMathOperator{\dr}{dr}
\DeclareMathOperator{\dad}{DAD}
\newcommand{\etale}{{\'e}tale}
\newcommand{\LCH}{locally compact, Hausdorff}
\newcommand{\cpc}{c.p.c.}
\newcommand{\red}{{\mathrm{r}}}
\newcommand{\ess}{{\mathrm{ess}}}
\newcommand{\nuc}{{\mathrm{nuc}}}
\newcommand{\dd}{\mathrm{d}}
\newcommand{\id}{\mathrm{id}}
\newcommand{\NN}{\mathbf{N}}
\newcommand{\RR}{\mathbf{R}}
\newcommand{\ZZ}{\mathbf{Z}}
\newcommand{\CC}{\mathbf{C}}
\newcommand{\TT}{\mathbf{T}}
\newcommand{\Ff}{\mathcal{F}}
\newcommand{\Aa}{\mathcal{A}}
\newcommand{\Rr}{\mathcal{R}}
\newcommand{\Hh}{\mathcal{H}}
\newcommand{\Dd}{\mathcal{D}}
\newcommand{\Cst}{\mathrm{C}^*}
\newcommand{\cX}{X_{+}}
\newcommand{\cG}{\widetilde{G}} 
\newcommand{\cl}[2][]{{\overline{{#2}}}^{#1}}
\newcommand{\net}[2]{\{#1\}_{#2}}
\theoremstyle{plain} 
    \newtheorem{thm}{Theorem}[section]
    \newtheorem{prop}[thm]{Proposition}
    \newtheorem{lemma}[thm]{Lemma}
    \newtheorem{corollary}[thm]{Corollary}
\numberwithin{equation}{section}
\theoremstyle{remark}
    \newtheorem{remark}[thm]{Remark}
\theoremstyle{definition} 
    \newtheorem{example}[thm]{Example}
    \newtheorem{defn}[thm]{Definition}
\title[Nuclear dimension of twisted groupoid $\Cst$-algebras]{%
Alexandrov groupoids and the
nuclear dimension of twisted groupoid  $\Cst$-algebras
}
\author[K.\ Courtney]{Kristin Courtney}
    \address{%
    Department of Mathematics and Computer Science,
    University of Southern Denmark, 
    Campusvej 55, 
    DK-5230 Odense M, 
    Denmark
    }%
    \email{kcourtney@imada.sdu.dk}
\author[A.\ Duwenig]{Anna Duwenig}
\address{Department of Mathematics, KU Leuven, Celestijnenlaan 200B, 3001 Leuven, Belgium}
\email{anna.duwenig@kuleuven.be}
\author[M.\ C.\ Georgescu]{Magdalena C.\ Georgescu}
    \address{Toronto, Ontario, Canada}
    \email{mcgeorgescu@gmail.com}
    \author[A.\ an Huef]{Astrid an Huef}
    \address{School of Mathematics and Statistics, Victoria University of Wellington, P.O.\ Box 600, Wellington 6140, Aotearoa New Zealand}
    \email{astrid.anhuef@vuw.ac.nz}
\author[M.\ G.\ Viola]{Maria Grazia Viola}
    \address{Lakehead University, Orillia ON L3V 0B9, Canada, and University of Toronto, Bahen Centre, 40 St. George Street, Room 6290, Toronto, ON M5S 2E4, Canada}
    \email{mviola@lakeheadu.ca}
\subjclass[2010]{46L05, 
22A22
}
\keywords{Dynamic asymptotic dimension, nuclear dimension, topological dimension, groupoid, twist, twisted groupoid $\Cst$-algebra, unitization}
\thanks{%
This research was initiated during  the project-oriented workshop ``Women in Operator Algebras II'' funded and hosted by the Banff International Research Station. It was further supported by the Deutsche Forschungsgemeinschaft (DFG, German Research Foundation) under Germany's Excellence Strategy -- EXC 2044 -- 390685587, Mathematics M\"unster -- Dynamics -- Geometry -- Structure; the Deutsche Forschungsgemeinschaft (DFG, German Research Foundation) – Project-ID 427320536 – SFB 1442; and ERC Advanced Grant 834267 - AMAREC. 
Astrid an~Huef was supported by  the  Marsden Fund of the Royal Society of New Zealand (grant number 21-VUW-156). Maria Grazia Viola was supported by the National Sciences and Engineering Research Council of Canada (RGPIN-386687).  
We thank the referees for helpful comments.
}%
\date{\today}
\begin{document}

\maketitle

\begin{abstract}
We consider a twist $E$ over an \etale\ groupoid $G$. When $G$ is principal, we  prove that the nuclear dimension of the reduced twisted groupoid $\Cst$-algebra is bounded by a number  depending on the dynamic asymptotic dimension of $G$ and the topological covering dimension of its unit space. This generalizes an analogous theorem by \citeauthor{GWY:2017:DAD} for the  $\Cst$-algebra of $G$. Our proof uses  a reduction to the  unital case where $G$ has compact unit space, via a construction of ``groupoid unitizations''  $\widetilde{G}$ and $\widetilde{E}$ of $G$ and  $E$ such that  $\widetilde{E}$ is a twist over $\widetilde{G}$. The construction of $\widetilde G$ is for r-discrete (hence 
for \etale)   groupoids $G$ which are not necessarily principal. When $G$ is \etale, the dynamic asymptotic dimension of $G$ and $\cG$ coincide.
We show that the minimal unitizations of the full and reduced twisted groupoid $\Cst$-algebras of the twist over $G$ are isomorphic to the twisted groupoid $\Cst$-algebras of the twist over $\widetilde{G}$.
We apply our result about the nuclear dimension of the twisted groupoid $\Cst$-algebra to obtain a  similar  bound on the nuclear dimension of the $\Cst$-algebra of an \etale\ groupoid with closed orbits and  abelian stability subgroups that vary continuously.
\end{abstract}

\section{Introduction}

Noncommutative generalizations of topological covering dimension have been employed in numerous contexts to capture 
rigidity phenomena. One that has received considerable attention in recent years in the study of 
$\Cst$-algebras is a refinement of nuclearity called nuclear dimension \cite{WinterZacharias:nuclear}, which provided the necessary additional 
rigidity for the completion of the classification program: all simple, separable, unital $\Cst$-algebras with finite nuclear dimension which satisfy a universal coefficient theorem (UCT) are classified by their K-theoretic and tracial data \cite{Tikuisis-White-Winter:QD}.  While finite nuclear dimension is a prerequisite of the classification result, simplicity forces the nuclear dimension of a classifiable $\Cst$-algebra to be either 0 or 1 \cite{CETWW}.

In recent years, the attention of $\Cst$-algebraists has  turned towards both determining whether a given $\Cst$-algebra is classifiable  (e.g., \cite{AAFGJST22,   EG16, GGKN,  GGKNV, GGNV, KeSz20}) and expanding classification to
the non-simple setting (e.g., \cite{BGSW22, W+20, EGM19, EM18, Evi22, RST15}). Results and techniques for  non-simple $\Cst$-algebras 
remain ad hoc, and a wealth of regularity properties 
are required for progress. Because of its permanence properties under various constructions, nuclear dimension remains an invariant of great interest  in this context.  A promising approach is the emerging correspondence between nuclear dimension and other noncommutative generalizations of topological covering dimension, including tower dimension \cite{Kerr:dim},  dynamic asymptotic dimension \cite{GWY:2017:DAD}, and diagonal dimension \cite{LLW}.

Asymptotic dimension was originally introduced by Gromov
as a coarse geometric analogue of covering dimension in topology \cite{Gro93}. 
In \cite{GWY:2017:DAD}, Guentner, Willett, and Yu generalized the notion of asymptotic dimension of a group to \LCH\ and 
\etale\  groupoids. 
 Groupoids and their associated $\Cst$-algebras have long been studied by  $\Cst$-algebraists  
 because they provide a rich class of natural and useful examples, and, more recently,
 because they have been shown to have deep connections to lingering problems and future tracks in  the aforementioned classification program in $\Cst$-algebras.

A major remaining problem of the classification program is whether 
the UCT assumption is redundant. 
 A remarkable breakthrough in \cite{BL:Cartan-UCT-I},  building on \cite{Tu99} and \cite{Kum:Diags, Renault:Cartan},  says that if a nuclear $\Cst$-algebra is isomorphic to a twisted groupoid $\Cst$-algebra, then it automatically satisfies the UCT.   Moreover, by recent work of \citeauthor{Li:Classif-Cartan}, any classifiable $\Cst$-algebra is isomorphic to a twisted groupoid $\Cst$-algebra  \cite{Li:Classif-Cartan}.  
 This has contributed to a flurry of research in twisted groupoid $\Cst$-algebras (e.g., \cite{Austad-Ortega, Bonicke, BFRP:Twisted, KLS, Li-Menger}), and it is hence of significant interest to detect, at the groupoid level, when an associated $\Cst$-algebra is classifiable.

For most of the assumptions, this has been explored:
the twists $E$ that give rise to classifiable $\Cst$-algebras are over second-countable, \LCH\ and \etale\ groupoids $G$ which satisfy an aperiodicity condition called effectiveness, and the associated reduced twisted groupoid $\Cst$-algebra $\Cst_\red(E;G)$ is simple if and only if $G$ is minimal  \cite{Armstrong:twistedgpds}. If $\Cst_{\red}(E;G)$ is nuclear, then we know from \cite{BL:Cartan-UCT-I, Tu99} that it automatically satisfies the UCT. A missing piece is conditions that translate to finite nuclear dimension (or, in the presence of nuclearity, other equivalent regularity properties of the $\Cst$-algebra; for example, see~\cite{CETWW}). Such conditions will also propel developments in non-simple classification.

 In Theorem~8.6 of \cite{GWY:2017:DAD}, Guentner, Willett and Yu showed that the nuclear dimension of the reduced $\Cst$-algebra of a second-countable, \LCH, principal and \etale\ groupoid $G$ is bounded by a number depending on the dynamic asymptotic dimension of the groupoid and the topological covering dimension of the  unit space of the groupoid. Many interesting $\Cst$-algebras, such as separable AF algebras, fit into this picture.  However, as far as we currently know, twists may be required to pick up all classifiable $\Cst$-algebras and there exist twisted groupoid $\Cst$-algebras that cannot be realized as non-twisted groupoid $\Cst$-algebras  \cite{BS21}. Thus, we want to detect finite nuclear dimension for a twisted groupoid $\Cst$-algebra.
 
 Our main result (Theorem \ref{holy grail}) shows that if $E$ is a twist over a second-countable, \LCH, principal and \etale\ groupoid $G$, then the nuclear dimension of $C_{\red}^\ast(E;G)$ is bounded  by  $(N+1)(d+1)-1$ where $N$ is the topological covering dimension of the unit space of $G$ and $d$ is the dynamic asymptotic dimension of $G$. 
 To prove this, we show that this $\Cst$-algebra satisfies a colored version of local subhomogeneity that appears implicitly in various proofs in the literature, including
\cite[Theorem~8.6]{GWY:2017:DAD} and \cite[Theorem~6.2]{Kerr:dim}.
 We show in \Cref{prop: colored loc subhom}, that in all these proofs it is this colored local subhomogeneity that determines the nuclear dimension of the associated $\Cst$-algebra.
 To prove that our twisted groupoid $\Cst$-algebras satisfy the assumptions of \Cref{prop: colored loc subhom}, we take inspiration from \cite{GWY:2017:DAD}  but need to take into account the  extra structure coming from the twist. 
 The subgroupoids that feature in the definition of dynamic asymptotic dimension 
 give rise to twists whose  $\Cst$-algebras have primitive ideal spaces that may not be Hausdorff, and hence finding bounds on the  topological dimension of these spaces is delicate (see \Cref{subgroupoid}). Crucial for this analysis is a description of the primitive ideal space of twisted groupoid $\Cst$-algebras by Clark and the fourth author in \cite{Clark-anHuef-RepTh}, and an understanding of the spaces of certain regular representations from work of Muhly and Williams in \cite{Muhly-Williams-CtsTr2}.  Building on these results, we show in Proposition~\ref{subgroupoid} that the twisted-groupoid $\Cst$-algebras of these subgroupoids have a recursive subhomogeneous decomposition, as studied in \cite{Phillips:recursive}, with  topological dimension at most that of their unit spaces.

 We also develop the Alexandrov compactifications of  an \'etale groupoid $G$  and of a twist $E$ over $G$; these   ``compactifications''  produce a twist $\widetilde{E}$ over $\widetilde{G}$ that witnesses the unitization of  the twisted groupoid $\Cst$-algebra at the groupoid level. 
 Thus the reduced (respectively, full) twisted groupoid $\Cst$-algebra
of the  twist $\widetilde{E}$  over $\widetilde{G}$
is isomorphic to the minimal unitization of the reduced (respectively, full) twisted groupoid $\Cst$-algebra of the twist over $G$.
We then show that  the 
dynamic
asymptotic dimension of the unit space of $G$ coincides with that of  $\widetilde{G}$, and this allows us to reduce the proof of Theorem~\ref{holy grail} to the unital case.

A surprising application of our main theorem is  a bound on the nuclear dimension of the $\Cst$-algebra  of  an \'etale  groupoid  $G$ with closed orbits but  potentially large abelian isotropy subgroups, provided these subgroups vary continuously  (\Cref{non-principal groupoids}).  
Here  $\Cst_\red(G)$ is isomorphic to a twisted groupoid $\Cst$-algebra over  a principal groupoid by \cite{Clark-anHuef-RepTh}.  This principal groupoid  is a  transformation groupoid $\widehat{\Aa}\rtimes\Rr$ obtained from an action of  the quotient groupoid $\Rr=G/\Aa$ of $G$ by the isotropy subgroupoid $\Aa$   on the spectrum $\widehat{\Aa}$ of the abelian $\Cst$-algebra of $\Aa$.  We show that if $\Rr$ has finite dynamic asymptotic dimension at most $d$, then so does $\widehat{\Aa}\rtimes\Rr$; thus  our \Cref{holy grail}  applies, provided a bound on the topological covering dimension of the unit space $\widehat{\Aa}$ can be computed.

We illustrate \Cref{non-principal groupoids} with two examples. The first, \Cref{ex: transformation}, is a transformation group whose $\Cst$-algebra has continuous trace but is not AF;
here we find that the nuclear dimension is $1$. The second,  \Cref{ex-CaH}, is the groupoid   of a directed graph $E$ whose $\Cst$-algebra  is AF embeddable. Since $\Cst(E)$ does not have a purely infinite ideal, it is outside the scope of the results on  nuclear dimension of graph $\Cst$-algebras in \cite{RST15}.    Moreover, since $E$ has return paths,  by \Cref{lemma-directed-graph-groupoids} the dynamic asymptotic dimension of the graph groupoid is  $\infty$, and hence is a poor predictor of the nuclear dimension of its $\Cst$-algebra. But in \Cref{non-principal groupoids} we consider a  groupoid 
modulo its isotropy groupoid, and 
this gives more information about the $\Cst$-algebra: we find that the nuclear dimension of $\Cst(E)$ is actually $1$.

\subsubsection*{Structure of the paper}
In \Cref{Preliminaries} we recall the definition of a groupoid twist, exhibit an induced Haar system on a twist over an \etale\ groupoid
and define the twisted groupoid $\Cst$-algebra. We also recall the definitions of nuclear dimension, decomposition rank and dynamic asymptotic dimension. In \Cref{unitization} we introduce the Alexandrov groupoid $\widetilde{G}$ of an r-discrete groupoid $G$, construct a twist over $\widetilde{G}$ from a twist over $G$, and study the associated $\Cst$-algebras. In \Cref{twistedgroupoidCalgebras} we prove our main 
    theorem; in 
particular, for \Cref{holy grail} and \Cref{subgroupoid} we assume that $E$ is a twist over an \etale\ and principal groupoid.  In \Cref{sec-application} we  develop our application to non-principal groupoids and discuss the two examples mentioned above.

\section{Preliminaries}\label{Preliminaries}

\subsection{Groupoids}

Let $E$ be a  \LCH\ groupoid. We denote by $E\z$ the unit space of $E$. The range and source maps $r,s\colon E\to E\z  $ are given by $r(e)=ee\inv$ and $s(e)=e\inv e$, respectively. The set of composable pairs $\{(d,e) : s(d)=r(e)\}$ is denoted by $E\comp $.  Then  $E$ is \emph{principal} if the map $e\mapsto (r(e),s(e))$ is injective. We say that $E$ is  {\em r-discrete} if its unit space $E\z$ is open and that $E$ is \emph{\etale} if the range map is a local homeomorphism. Notice that every \etale\ groupoid is  r-discrete.  For a subset $X\subset E$ we write $\langle X \rangle$ for the subgroupoid of $E$ generated by $X$. 

Let $W\subset E\z  $. We define the \emph{restriction of $E$ to $W$} to be 
\[
\restrict{E}{W}=\{e\in E : s(e), r(e)\in W\}.
\]
Note that $\restrict{E}{W}$ is an algebraic groupoid  with unit space $W$.  If, for example, $W$ is locally closed in $E\z  $, then $\restrict{E}{W}$ is a \LCH\ groupoid (see \Cref{restriction of twist non-etale} below).

The \emph{saturation} of $W$ is $r(s\inv (W))=s(r\inv (W))$, and if $W=r(s\inv (W))$ then we say that $W$ is \emph{saturated}. If $x\in E\z  $, we call the saturation of $\{x\}$ the \emph{orbit} of $x$ and denote it by $[x]$.  We also  set $xE\coloneqq\{e\in E : r(e)=x\}$ and $Ex\coloneqq\{e\in E : s(e)=x\}$. The group $xE\cap Ex$ is called the \emph{stability subgroup at $x$} and is also known as the \emph{isotropy subgroup at $x$}.

Now suppose that $E$ has a left Haar system $\sigma=\{\sigma^x : x\in E\z\}$; if $E$ is \etale, then we choose the Haar system of counting measures. We  recall the definition of the usual full and reduced $\Cst$-algebras of $E$. Let  $C_c(E)$ be the vector space of continuous and compactly supported $\CC$-valued functions on $E$, equipped  with convolution and involution given for $e\in E$ and $f,g\in C_c(E)$ by
\[
f*g(e)=\int_E f(d)g(d\inv e)\,\dd\sigma^{r(e)}(d)\quad\text{and}\quad f^*(e)=\overline{f(e\inv )} .
\]
A $*$-homomorphism $L\colon C_c(E)\to B(H_L)$
into the bounded operators on some Hilbert space $H_L$
is a \emph{representation} if it is $I$-norm bounded, or equivalently, if it is continuous when $C_c(E)$ has the inductive limit topology and  $B(H_L)$ the weak operator topology \cite[Remark~1.46]{Williams:groupoid}. Then the \emph{full $\Cst$-algebra} $\Cst(E,\sigma)$ of $E$ is the completion of $C_c(E)$ in the norm
\[
\|f\|_{\Cst(E,\sigma)}=\sup\{\|L(f)\| : \text{$L$ is a representation of $C_c(E)$}\}.
\]

As usual, we define another measure $\sigma_x$ on $E$ with support in $Ex$ by $\sigma_x(U)=\sigma^x(U\inv )$.
For each $x\in E\z  $, define $L^x\colon C_c(E)\to B(L^2(Ex,\sigma_x))$ for $e\in Ex$ by
 \begin{equation}\label{Lrep}
 \big(L^x(f)\xi\big)(e)=\int_E f(d)\xi(d^{\inv} e)\, \dd\sigma^{r(e)} (d)
 \end{equation}
 Notice that if $\xi\in C_c(Ex) \subset L^2(Ex,\sigma_x)$, then $L^x(f)\xi=f*\xi$.  By \cite[Proposition~1.41]{Williams:groupoid}, $L^x$ is a representation of $C_c(E,\sigma)$  and hence extends to a representation $L^x\colon \Cst(E)\to B(L^2(Ex,\sigma_x))$.  The  \emph{reduced $\Cst$-algebra} $\Cst_\red(E,\sigma)$ of $E$  is the completion of $C_c(E)$ in the norm
 \[\|f\|_{\Cst_\red(E,\sigma)} =\sup\{\|L^x(f)\| : x\in E\z  \}.\]

\subsection{Twists}

There are various definitions of twists in the literature, for example, \cite[Definition~2.4]{Kum:Diags}, \cite[\S2]{Muhly-Williams-CtsTr2} \cite[\S4]{Renault:Cartan}, \cite[Definition~11.1.1]{Sims:gpds} and \cite[Definition~3.1]{Bonicke}, and they are not always consistent.  Here we want to allow twists over possibly non-\etale\ and non-principal groupoids. We start with the definition from \cite{Bonicke}.

 \begin{defn}\label{def:twist2} Let $G$ be a locally compact and Hausdorff groupoid, and regard $G\z \times\TT$ as a trivial group bundle with fibers $\TT$.  A \emph{twist} $(E, \iota, \pi)$ over $G$ consists of a locally compact and Hausdorff groupoid $E$ and groupoid homomorphisms $\iota$, $\pi$ such that 
  \[G\z  \times \TT \stackrel{\iota}{\longrightarrow} E \stackrel{\pi}{\longrightarrow} G\]
is a central groupoid extension, which means that
\begin{enumerate}
    \item\label{item:iota}  $\iota\colon G\z \times\TT\to \pi\inv (G\z)$ is a homeomorphism, where $\pi\inv (G\z)$ has the subspace topology from $E$, and it satisfies
    $\iota(\pi(u),1) = u$ for all $u\in E\z$;
    \item $\pi$ is a continuous, open surjection; and
    \item 
    $\iota(\pi(r(e)), z)e=e\iota(\pi(s(e)), z)$
    for all $e\in E$ and $z\in\TT$.
\end{enumerate}
\end{defn}
The twists in \cite{Muhly-Williams-CtsTr2} and \cite{ Clark-anHuef-RepTh} are  twists in the sense of  \Cref{def:twist2} with the additional assumption that $G$ is principal. Because of Condition~(\ref{item:iota}), we may identify $E\z$ with $G\z$
via the homeomorphism $\pi|_{E\z}$.

Let $(E, \iota, \pi)$ be a twist over a locally compact and Hausdorff groupoid~$G$. For $z\in \TT$ and $e\in E$, define
 \[z\cdot e \coloneqq\iota(r(e), z)e = e\iota(s(e), z).\] 
 This gives  a  continuous action of $\TT$ on $E$ which is free because $\iota$ is injective. If $d, e\in E$ such that $\pi(d)=\pi(e)$, then there exists a unique $z\in\TT$ such that~$d=z\cdot e$. 

We equip the orbit space $E/\TT$ with the quotient topology with respect to the orbit map  $q\colon E\to E/\TT$. Then $q$ is open by \cite[Lemma~ 4.57]{RaWi:Morita}, and so by exactness, $\pi$ induces a homeomorphism of $E/\TT$ onto $G$. Thus,  we may identify $E/\TT$ and $G$, and hence also $q$ and~$\pi$.
Since $E$ is locally compact and Hausdorff, it is completely regular \cite[p.~151]{Pedersen:AnaNow}.  Now~$\TT$ is a Lie group acting freely and, by compactness, properly on the completely regular  space~$E$, and it follows from \cite[Theorem on p.~315]{Palais:slices} that $\pi$ is a locally trivial principal bundle, that is, for very $\alpha\in G$ there exist a neighborhood $U$ of $\alpha$ and  a continuous  map $S\colon U\to E$ such that $\pi\circ S=\id_U$ (see \cite[Proposition~4.65 and Hooptedoodle~4.68]{RaWi:Morita}). It is then routine to check that  $(\beta, z)\mapsto z\cdot S(\beta)$ is a homeomorphism of $U\times\TT$ onto~$\pi\inv (U)$.  

The existence of continuous local sections and the local trivializations are often assumed in the definition of a twist over an \etale\ groupoid (see, for example, \cite{Sims:gpds}), and it then follows that $\pi$ is automatically an open map. Indeed, when $G$ is \etale, \cite[Definition~11.1.1]{Sims:gpds} and \Cref{def:twist2} are equivalent, as outlined in  \cite[Remark~2.6]{Armstrong:twistedgpds}.

If $G$ is r-discrete, then $G\z =E\z$ is open in $G$ and in $E$, and hence $\iota$ is an open map into $E$ as it is a homeomorphism onto an open subset. If $G$ is even \etale, then  the range and  source maps on $E$ are open, and the multiplication map on $E\comp $ is open (see, for example, \cite[Lemma~2.7]{Armstrong:twistedgpds}).

The following \namecref{lem:pi proper} is well known, but we could not find a reference for it.

\begin{lemma}\label{lem:pi proper}
 Let $(E, \iota, \pi)$ be a twist over a \LCH\ groupoid $G$.   Then $\pi$ is a proper map.
\end{lemma}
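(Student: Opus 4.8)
The plan is to use the structure of $\pi$ established just above: it is a locally trivial principal $\TT$-bundle, and in particular its fibres are homeomorphic to the compact group $\TT$. Recall that ``proper'' here means that $\pi\inv(K)$ is compact whenever $K\subseteq G$ is compact, so fix a compact set $K\subseteq G$.

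First I would cover $K$ by finitely many open sets $U_1,\dots,U_n\subseteq G$ over each of which $\pi$ trivialises, i.e.\ there is a homeomorphism $\pi\inv(U_i)\cong U_i\times\TT$ intertwining $\pi$ with the projection onto the first factor. The key point is then to \emph{shrink} this cover to one by compact sets: since $G$ is locally compact and Hausdorff, every point of $K$ has a compact neighbourhood contained in some $U_i$, and extracting a finite subcover of $K$ by the interiors of such neighbourhoods, grouping them by a chosen index, and intersecting with $K$ yields compact sets $K_1,\dots,K_n$ with $K_i\subseteq U_i$ and $K=\bigcup_i K_i$. Under the homeomorphism $\pi\inv(U_i)\cong U_i\times\TT$, the set $\pi\inv(K_i)$ corresponds to $K_i\times\TT$, which is compact because $\TT$ is compact; hence $\pi\inv(K)=\bigcup_{i=1}^n\pi\inv(K_i)$ is a finite union of compact sets and therefore compact.

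I expect the only step requiring any care is the shrinking of the finite open cover $\{U_i\}$ to a cover by compact sets, which is exactly where local compactness of $G$ is used; everything else is immediate from the local product structure and compactness of $\TT$. (Alternatively, one can avoid local triviality altogether: since $\TT$ is a compact group acting freely on $E$ with orbit map $\pi$, a short net argument shows that $\TT\cdot C$ is closed for every closed $C\subseteq E$, so $\pi$ is a closed map; and a closed continuous surjection with compact fibres is automatically proper, the fibres of $\pi$ being single $\TT$-orbits, hence copies of $\TT$.)
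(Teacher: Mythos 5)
Your argument is correct, but it proves the lemma by a genuinely different route than the paper. The paper gives a direct net-and-subnet argument: for a net in $\pi^{-1}(K)$, compactness of $K$ gives a convergent subnet of the images, Fell's criterion (using only that $\pi$ is an open surjection) lifts this to a convergent net in $E$ over the same base points, and compactness of $\TT$ then corrects the $\TT$-discrepancy so that a subnet of the original net converges; closedness of $\pi^{-1}(K)$ finishes. Your main argument instead leans on the locally trivial principal $\TT$-bundle structure (which the paper does establish just before the lemma, via Palais' slice theorem), covering $K$ by finitely many trivializing charts and shrinking to a cover of $K$ by compact sets $K_i$ contained in the charts, so that each $\pi^{-1}(K_i)\cong K_i\times\TT$ is compact and $\pi^{-1}(K)$ is a finite union of compacta; the shrinking step you flag is fine, since closed subsets of $K$ meeting finitely many compact neighbourhoods do the job. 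What each approach buys: yours is shorter and more conceptual once local triviality is in hand, but it inherits the (comparatively heavy) dependence on Palais' theorem; the paper's argument needs only openness of $\pi$ and compactness of $\TT$, so it is more self-contained and would survive in settings where local sections are not available. Your parenthetical alternative --- $\TT$ compact acting freely, so saturations of closed sets are closed, hence $\pi$ is a closed map with compact fibres and therefore proper --- is also correct and is in fact the closest in spirit to the paper's proof, essentially repackaging the same net argument into the standard fact that a closed continuous surjection with compact fibres is proper.
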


\begin{proof}
    Let $K\subset G$ be a compact set; we show that any net $\net{e_{i}}{i}$ in $\pi\inv (K)$ has a convergent subnet. Since $G$ is Hausdorff, both $K$ and hence $\pi\inv(K)$ are closed; therefore, if a subnet of $\net{e_{i}}{i}$ converges in $E$, its limit must be in $\pi\inv (K)$.
    
    Let $\gamma_{i}\coloneqq \pi(e_{i})$. Since $\gamma_i\in K$ and $K$ is compact, the net has a convergent subnet; without loss of generality, we may assume that $\net{\gamma_i}{i}$ itself converges to, say, $\gamma$. Let $e\in \pi{\inv} (\gamma)$ be arbitrary. Since $\pi$ is an open surjection, we can use Fell's criterion (\cite[Proposition~1.1]{Williams:groupoid}): there exists a subnet $\net{\gamma_{g(m)}}{m\in M}$ and a net $\net{\tilde{e}_{m}}{m\in M}$, indexed by the same directed set $M$, such that $\tilde{e}_{m}\to e$ in $E$ and $\pi(\tilde{e}_{m})=\gamma_{g(m)}$ for all $m$. By choice of $\gamma_i$, the latter implies that there exist $z_{m}\in \TT$ such that $\tilde{e}_{m} = z_{m}\cdot e_{g(m)}$.
    Since $\TT$ is compact, there exists a subnet $\net{z_{h(n)}}{n\in N}$ of $\net{z_{m}}{m\in M}$ that converges to, say, $z$. Since everything in sight is continuous, we conclude that
    \[\lim_n e_{g(h(n))} = \lim_n \overline{z_{h(n)}}\cdot \tilde{e}_{h(n)}=\overline{z}\cdot e. \]
    In other words, we have found a subnet of $\net{e_{i}}{i}$ that converges. 
\end{proof}

A version of the following \namecref{restriction of twist non-etale} was implicitly used in \cite[Lemma~3.1]{Clark-anHuef-RepTh}. 
Recall that a subset $W$ of a topological space $X$ is  {\em locally closed} if each point in $W$ has an open neighborhood $U$ in $X$ such that $U\cap W$ is closed in $U$.

\begin{lemma}\label{restriction of twist non-etale}
    Let $G$ be a locally compact and  Hausdorff groupoid, and  let $(E, \iota, \pi)$ be a twist over $G$. Let $W$ be a locally closed subset of $G\z  $. Then $\restrict{E}{W}$  and $\restrict{G}{W}$ are locally compact and Hausdorff groupoids, and $(\restrict{E}{W}, \iota|_{W\times\TT}, \pi|_{\restrict{E}{W}})$ is a twist over $\restrict{G}{W}$. If $G$ is r-discrete, \etale\ or principal, then so is $\restrict{G}{W}$.
\end{lemma}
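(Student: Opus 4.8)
The plan is to verify each assertion in turn, starting with the purely topological fact that $\restrict{E}{W}$ and $\restrict{G}{W}$ are locally compact and Hausdorff, then checking the twist axioms, and finally handling the three "if $G$ is \dots" clauses. The Hausdorff property is immediate since $\restrict{E}{W}$ and $\restrict{G}{W}$ inherit it as subspaces of $E$ and $G$. For local compactness, the key observation is that $W$ locally closed in $G\z$ forces $\restrict{G}{W} = \{\gamma \in G : s(\gamma), r(\gamma) \in W\} = (s\times r)^{-1}(W\times W)$ to be locally closed in $G$: indeed $W\times W$ is locally closed in $G\z\times G\z$ (a product of two locally closed sets), and the continuous map $(s,r)\colon G\to G\z\times G\z$ pulls locally closed sets back to locally closed sets, because "locally closed" means "open in its closure" and preimages respect both open sets and closures-of-preimages appropriately — more concretely, one writes $W = U\cap C$ with $U$ open and $C$ closed in $G\z$, so $\restrict{G}{W} = (s,r)^{-1}(U\times U) \cap (s,r)^{-1}(C\times C)$ is the intersection of an open and a closed subset of $G$. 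A locally closed subset of a locally compact Hausdorff space is again locally compact Hausdorff in the subspace topology, so $\restrict{G}{W}$ is a \LCH\ groupoid; the same argument applied to $\pi^{-1}$ of things, or directly to $\restrict{E}{W} = \{e \in E : s(e), r(e)\in W\}$ using the continuity of $s, r$ on $E$ together with $E\z = G\z$, shows $\restrict{E}{W}$ is \LCH.

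\textbf{Next I would check the twist axioms} for $(\restrict{E}{W}, \iota|_{W\times\TT}, \pi|_{\restrict{E}{W}})$ over $\restrict{G}{W}$. First one must confirm these maps land where claimed: if $e \in \restrict{E}{W}$ then $\pi(e) \in \restrict{G}{W}$ since $\pi$ intertwines range and source and is the identity on unit spaces; and $\iota(W\times\TT) = \pi^{-1}(W) \cap (\restrict{E}{W})^{\scriptscriptstyle(0)}$-type identifications go through because $\iota(x,z)$ has range and source both equal to $x \in W$. Axiom (1): $\iota|_{W\times\TT}\colon W\times\TT \to \pi^{-1}(W)$ (computed inside $\restrict{E}{W}$, which equals $\restrict{E}{W}^{(0)}\times\TT$ under $\iota$) is a homeomorphism because it is the restriction of the homeomorphism $\iota$ to a subspace, with the subspace topology on both sides matching by definition of $\restrict{E}{W}$ carrying the subspace topology from $E$. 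Axiom (3), centrality, is inherited verbatim from $E$: for $e \in \restrict{E}{W}$ and $z\in\TT$ we have $s(e), r(e)\in W$, so $\iota(r(e),z)$ and $\iota(s(e),z)$ lie in $\restrict{E}{W}$, and the identity $\iota(r(e),z)e = e\iota(s(e),z)$ already holds in $E$. Axiom (2) — that $\pi|_{\restrict{E}{W}}$ is a continuous open surjection onto $\restrict{G}{W}$ — is the only slightly delicate point: surjectivity and continuity are clear, and openness should follow from the local triviality of the bundle $\pi\colon E\to G$ established in the preliminaries, since local triviality restricts to local triviality over $\restrict{G}{W}$ (a product projection $U'\times\TT \to U'$ is open for $U' \subseteq \restrict{G}{W}$ open), and a map that is locally a trivial $\TT$-bundle projection is open.

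\textbf{The main obstacle} I anticipate is exactly this openness of $\pi|_{\restrict{E}{W}}$, because $\restrict{E}{W}$ may fail to be open in $E$ (it is only locally closed), so one cannot simply say "a restriction of an open map to an open set is open." The cleanest route is to use the local triviality: cover $G$ by open sets $U$ with $\pi^{-1}(U)\cong U\times\TT$; then $\pi^{-1}(U)\cap \restrict{E}{W} \cong (U\cap \restrict{G}{W})\times\TT$ under this homeomorphism (since the $\TT$-action preserves range and source, hence preserves $\restrict{E}{W}$), and the $\TT$-coordinate-forgetting map from $(U\cap\restrict{G}{W})\times\TT$ onto $U\cap\restrict{G}{W}$ is open; patching these local statements gives that $\pi|_{\restrict{E}{W}}$ is open. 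Finally, for the last sentence: if $G$ is r-discrete then $G\z$ is open in $G$, hence $\restrict{G}{W}^{(0)} = W$ is open in $\restrict{G}{W}$ (it is $W = \restrict{G}{W}\cap G\z$), so $\restrict{G}{W}$ is r-discrete; if $G$ is \etale, the range map of $\restrict{G}{W}$ is the restriction of a local homeomorphism to the locally closed set $\restrict{G}{W}$ with codomain restricted to $W$, which is again a local homeomorphism onto $W$ — one checks this on a chart where $r\colon G\to G\z$ is a homeomorphism, intersecting with $\restrict{G}{W}$; and if $G$ is principal then so is any restriction, since injectivity of $\gamma\mapsto(r(\gamma),s(\gamma))$ on $G$ restricts to injectivity on the subset $\restrict{G}{W}$.
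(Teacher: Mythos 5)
Most of your proposal is correct and runs parallel to the paper's argument. Your proof of local compactness (writing $W=U\cap C$ with $U$ open and $C$ closed, so that $\restrict{G}{W}$ and $\restrict{E}{W}$ are intersections of an open and a closed subset of $G$ resp.\ $E$) is a legitimate variant of the paper's two-step argument (open and closed cases first, then ``$W$ is open in $\cl{W}$''), and your verification of the twist axioms other than openness of $\pi|_{\restrict{E}{W}}$ matches the paper. For that openness you take a genuinely different route, restricting the local trivializations of the principal $\TT$-bundle and patching; this works, but it is heavier than needed. The paper instead uses only that $\restrict{E}{W}=\pi\inv(\restrict{G}{W})$ (because $\pi$ preserves range and source once $E\z$ is identified with $G\z$), so that for $U=U'\cap\restrict{E}{W}$ with $U'$ open in $E$ one gets $\pi(U'\cap\restrict{E}{W})=\pi(U')\cap\restrict{G}{W}$, which is open in $\restrict{G}{W}$ since $\pi$ is open; the relevant point is that $\restrict{E}{W}$ is $\pi$-saturated, not that the bundle is locally trivial.

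The genuine gap is in your \etale\ clause. Restricting a local homeomorphism to $\restrict{G}{W}$ and corestricting to $W$ need not give a local homeomorphism: on a chart (open bisection) $B$ of $G$, the image $r(B\cap\restrict{G}{W})$ need not be open in $W$, because membership of $\gamma$ in $\restrict{G}{W}$ also constrains $s(\gamma)$ and $W$ is only locally closed. Concretely, take $G=(\ZZ/2\ZZ)\ltimes[-1,1]$ for the flip $t\mapsto -t$ and $W=[0,1]$, which is closed (hence locally closed) but not invariant. Then $\restrict{G}{W}=(\{0\}\times[0,1])\cup\{(1,0)\}$, the singleton $\{(1,0)\}=\restrict{G}{W}\cap(\{1\}\times(-\tfrac12,\tfrac12))$ is open in $\restrict{G}{W}$, and its range image $\{0\}$ is not open in $W$; since the range map of an \etale\ groupoid is open, $\restrict{G}{W}$ is not \etale, so the chart check you propose fails exactly at such points. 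You should also be aware that the paper's own one-line justification of this clause (that the counting measures restrict to a Haar system on $\restrict{G}{W}$) founders on the same example: with $f$ the characteristic function of the clopen set $\{(1,0)\}$, the map $x\mapsto\sum_{\gamma\in x\restrict{G}{W}}f(\gamma)$ is $1$ at $x=0$ and $0$ elsewhere, hence not continuous. The \etale\ assertion is correct, and your chart argument goes through, when $W$ is open in $G\z$ or when $W$ is invariant (then $B\cap\restrict{G}{W}=(r|_B)\inv(r(B)\cap W)$ and $r(B\cap\restrict{G}{W})=r(B)\cap W$ is open in $W$), and these are the only situations in which the lemma is invoked later in the paper; for an arbitrary non-invariant locally closed $W$ the clause fails, so a correct write-up must add such a hypothesis rather than argue as you (or the paper's one-liner) do.
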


\begin{proof} 
In the special case where  $W$ is either open or closed in $G\z  $, we have that $\restrict{E}{W}=r_E^{\inv}(W)\cap s_E^{\inv}(W)$ is an open (respectively, closed) subset of a locally compact space, hence is locally compact.  Next, suppose that $W$ is locally closed in $G\z  $, so that $W$ is open in its closure $\cl{W}$ by \cite[Lemma~1.25]{Williams:Crossed}. Now $\restrict{E}{\cl{W}}$ is locally compact and $W$ is an open subset of  its unit space $\cl{W}$, and so $\restrict{E}{W}$ is locally compact by the previous argument. Similarly, $\restrict{G}{W}$ is locally compact.
Subsets of Hausdorff spaces are Hausdorff and  restrictions of continuous maps are continuous, and so both $\restrict{E}{W}$ and $\restrict{G}{W}$ are locally compact and   Hausdorff  groupoids with unit spaces $W$. It remains to show that $(\restrict{E}{W}, \iota|_{W\times\TT}, \pi|_{\restrict{E}{W}})$ is a twist over $\restrict{G}{W}$.

The restrictions $\iota|_{W\times\TT}$ and $\pi|_{\restrict{E}{W}}$ are still continuous groupoid homomorphisms, and  $\iota|_{W\times\TT}$ is still injective and its range is contained in $\restrict{E}{W}$. 
    Restricting a homeomorphism to a subset of its domain yields a homeomorphism onto the image of that subset; thus, $\iota|_{W\times\TT}$ is a homeomorphism onto $\iota(W\times\TT)=\pi\inv (W)=\pi\inv (\restrict{E}{W}\z )$.
It is immediate that $\iota|_{W\times\TT}(W\times \TT)$ is central in $\restrict{E}{W}$ because it  is central in $E$.
To see that $\pi|_{\restrict{E}{W}}$ is open, let $U$ be open in $\restrict{E}{W}$. Then there exists an open subset $U'$ of $E$ such that $U=U'\cap \restrict{E}{W}$. Then
\begin{align*}
    \pi|_{\restrict{E}{W}}(U)=\pi(U'\cap \restrict{E}{W})=\{\pi(e) : e\in U', r_E(e), s_E(e)\in W\}=\pi(U')\cap \restrict{G}{W}
\end{align*}
because $\pi$ is range and source preserving. Since $\pi$ is open, $\pi(U')$ is open in $G$, and hence $\pi|_{\restrict{E}{W}}(U)$ is open in $\restrict{G}{W}$. 
Thus, $\pi|_{\restrict{E}{W}}$ is open and $(\restrict{E}{W}, \iota|_{W\times\TT}, \pi|_{\restrict{E}{W}})$ is a twist over $\restrict{G}{W}$. 

Clearly,  if $G$ is principal, then so is $G|_W$.
Now suppose that $G$ is r-discrete, i.e.,  $G\z  $ is open in $G$. Hence, $W=\restrict{G}{W}\cap G\z  $ is open in $\restrict{G}{W}$ and so $\restrict{G}{W}$ is r-discrete. If $G$ is even \etale, then since the counting measures form a Haar system for $G$, they also form a Haar system for $\restrict{G}{W}$.  It follows from \cite[Proposition~1.29]{Williams:groupoid} that  $\restrict{G}{W}$ is also \etale.
\end{proof}

\subsubsection{An induced Haar system on a twist}

In this paper we consider $r$-discrete as well as \etale\ groupoids $G$. To consider a $\Cst$-algebra of $G$, we need to assume that $G$ admits a Haar system. But an r-discrete groupoid with a Haar system is \etale\ by \cite[Propositions~1.23 and 1.29]{Williams:groupoid}, and then we may as well use the  Haar system of counting measures. So whenever we consider a $\Cst$-algebra of $G$ or a twisted groupoid $\Cst$-algebra over $G$, then we will assume that $G$ is \etale\ and we will write $\Cst_\red (G)$ and $\Cst(G)$ for its reduced respectively full $\Cst$-algebra. Moreover, for a twist over $G$, we will also always use the induced Haar system  that we now construct. See also \Cref{counting measures good}.

Let $(E, \iota, \pi)$ be a twist over a \LCH\ groupoid $G$, let~$\lambda$ be normalized Haar measure on $\TT$, and assume that $G$ is \etale. Fix $x\in E\z  =G\z  $ and let $\gamma\in xG$. For each $e\in\pi\inv (\gamma)$ there is a homeomorphism $\rho_{e}\colon\TT\to\pi^{\inv} (\gamma)$ given by $z\mapsto z\cdot e$. 
We equip $\pi^{\inv} (\gamma)$ with the measure $\sigma^{x, \gamma}$ defined by $\sigma^{x, \gamma}(U)=\lambda(\rho_{e}{\inv}(U))$, where $e\in \pi\inv(\gamma)$ is arbitrary. This notation is justified, for if $\pi(d)=\pi(e)$, then $d=w\cdot e$ for some $w\in\TT$, and since $\lambda$ is rotation invariant  it follows that $\sigma^{x, \gamma}$ indeed does not depend on the choice of  $e\in\pi\inv (\gamma)$. Because $xE$ is the disjoint union $\bigsqcup_{\gamma\in xG}\pi\inv (\gamma)$ we obtain a measure $\sigma^x$ on $E$, taking values in $[0,\infty]$ and with support in $xE$, such that \begin{equation}\label{sigma}\sigma^x(U)=\sum_{\gamma\in xG}\sigma^{x, \gamma}(U).
\end{equation}

The following \namecref{Haar system} is well known; we provide a proof for completeness.

\begin{lemma}\label{Haar system} Let $(E, \iota, \pi)$ be a twist over a \LCH\ and \etale\ groupoid~$G$. Let $\mathfrak{s}\colon G\to E$ be a (not necessarily continuous) section  of $\pi$ and for $x\in G\z  $ let $\sigma^x$ be the measure in ~\eqref{sigma}. Then for $f\in C_c(E)$ we have
\begin{equation}\label{sigma again}
\int f(e)\, \dd \sigma^x(e)=\sum_{\gamma\in xG}\int_\TT f\big(z\cdot\mathfrak{s}(\gamma)\big)\, \dd\lambda(z).
\end{equation}
Moreover, $\{\sigma^x : x\in G\z  \}$ is a left Haar system on $E$.
\end{lemma}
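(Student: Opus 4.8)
The goal is to prove the two assertions of Lemma~\ref{Haar system}: the integration formula \eqref{sigma again}, and that $\{\sigma^x : x\in G\z\}$ is a left Haar system on $E$. I would organize the proof around the three defining properties of a Haar system: (i) the support of $\sigma^x$ is $xE$; (ii) the function $x\mapsto \int f\,\dd\sigma^x$ is continuous and compactly supported for each $f\in C_c(E)$; and (iii) left-invariance, $\int f(d\inv e)\,\dd\sigma^{s(d)}(e)=\int f(e)\,\dd\sigma^{r(d)}(e)$ for $d\in E$. The formula \eqref{sigma again} is the natural first step, since it makes all three properties accessible.

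**Step 1: the formula \eqref{sigma again}.** Fix $x\in G\z$ and $f\in C_c(E)$. Unwinding the definition \eqref{sigma}, we have $\int f\,\dd\sigma^x = \sum_{\gamma\in xG}\int_{\pi\inv(\gamma)} f\,\dd\sigma^{x,\gamma}$, and by the change-of-variables along the homeomorphism $\rho_{\mathfrak s(\gamma)}\colon\TT\to\pi\inv(\gamma)$, $z\mapsto z\cdot\mathfrak s(\gamma)$, together with the pushforward definition $\sigma^{x,\gamma}=(\rho_{\mathfrak s(\gamma)})_*\lambda$, each inner integral equals $\int_\TT f(z\cdot\mathfrak s(\gamma))\,\dd\lambda(z)$. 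This is a routine computation, and independence of the choice of section follows from rotation-invariance of $\lambda$, as already noted in the construction. One should remark that the sum over $\gamma\in xG$ is effectively finite: since $\pi$ is proper (\Cref{lem:pi proper}) and $G$ is \etale, $\supp f$ meets only finitely many fibers $\pi\inv(\gamma)$ with $\gamma\in xG$ — indeed $\pi(\supp f)$ is compact and $xG\cap\pi(\supp f)$ is a compact discrete set, hence finite — so \eqref{sigma again} is a well-defined finite sum and $\sigma^x$ is a Radon measure.

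**Step 2: support, continuity, compact support.** That $\supp\sigma^x\subseteq xE$ is immediate from the construction, and equality holds because $\lambda$ has full support on $\TT$ and $\pi$ is onto $xG$. For continuity in $x$: writing $F(x):=\int f\,\dd\sigma^x=\sum_{\gamma\in xG}\int_\TT f(z\cdot\mathfrak s(\gamma))\,\dd\lambda(z)$, I would pass to the function $\bar f\in C_c(G)$ defined by $\bar f(\gamma):=\int_\TT f(z\cdot e)\,\dd\lambda(z)$ for any $e\in\pi\inv(\gamma)$ — this is well-defined by rotation-invariance, continuous because $\pi$ admits continuous local sections and the $\TT$-action is continuous, and compactly supported because $\pi(\supp f)$ is compact. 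Then $F(x)=\sum_{\gamma\in xG}\bar f(\gamma)=\int_G\bar f\,\dd\sigma_G^x$ where $\sigma_G$ is the counting-measure Haar system on the \etale\ groupoid $G$; since that is a Haar system, $F\in C_c(G\z)$. So continuity and compact support of $x\mapsto\int f\,\dd\sigma^x$ reduce cleanly to the corresponding (known) facts for the counting Haar system on $G$.

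**Step 3: left-invariance, and the main obstacle.** Fix $d\in E$ and set $x=r(d)$, $y=s(d)$, $\beta=\pi(d)\in xG$. Left-translation by $d$ maps $yE$ homeomorphically onto $xE$, carrying $\pi\inv(\eta)$ onto $\pi\inv(\beta\eta)$ for $\eta\in yG$. The crux is that this translation is compatible with the measures $\sigma^{y,\eta}$ and $\sigma^{x,\beta\eta}$: one checks $d\cdot(z\cdot e)=z\cdot(de)$ for $e\in\pi\inv(\eta)$, $z\in\TT$ (this uses centrality, Definition~\ref{def:twist2}(3)), so left-translation by $d$ intertwines $\rho_e$ with $\rho_{de}$, hence pushes $\sigma^{y,\eta}$ forward to $\sigma^{x,\beta\eta}$. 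Summing over $\eta\in yG$ and using that $\eta\mapsto\beta\eta$ is a bijection $yG\to xG$ gives $\int f(d\inv e)\,\dd\sigma^x(e)=\int f(e)\,\dd\sigma^y(e)$. I expect the main obstacle to be purely bookkeeping: keeping the fiberwise change-of-variables, the reindexing of the sum over $G$-fibers, and the measure-theoretic justification (monotone convergence / finiteness of the sums, interchanging sum and integral) all straight simultaneously. No deep input is needed beyond properness of $\pi$ (\Cref{lem:pi proper}), existence of continuous local sections (established in the preliminaries), and the fact that counting measures form a Haar system on the \etale\ groupoid $G$.
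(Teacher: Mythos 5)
Your proposal is correct and follows essentially the same route as the paper: formula \eqref{sigma again} by unwinding the definition, full support from the fact that $\lambda$ has full support, continuity by passing to the function $\gamma\mapsto\int_\TT f(z\cdot\mathfrak{s}(\gamma))\,\dd\lambda(z)$ in $C_c(G)$ (via continuous local sections) and invoking the counting-measure Haar system on $G$, and left-invariance from rotation invariance of $\lambda$ together with the bijection of fibers, which your pushforward phrasing packages in the same way the paper's explicit change of variables does. One small slip: in your opening plan the left-invariance identity has the superscripts $s(d)$ and $r(d)$ interchanged (for $e$ in the support of $\sigma^{s(d)}$ the product $d^{-1}e$ is not defined), but Step 3 states and proves the correct identity, so this does not affect the argument.
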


\begin{proof}
Fix $x\in G\z$. By definition, each $\sigma^x$ has support contained in $xE$ and the formula~\eqref{sigma again} holds. 
Next, fix $e\in xE$ and let  $U$ be a precompact neighborhood of $e$.
For each $\gamma\in xG$, choose $e_\gamma\in\pi\inv (\gamma)$. Since $\rho_{e_\gamma}\inv (U)$ is open in $\TT$ and since $\lambda$ is a Haar measure, we get
 \[
 \sigma^x (U)=\sum_{\gamma\in xG}\sigma^{x,\gamma}(U)=\sum_{\gamma\in xG}\lambda (\rho_{e_\gamma}\inv(U)) >0.
\]
Thus, the support of $\sigma^x$ is $xE$. 

Next, we show that the collection of measures is left-invariant, i.e., that for $d\in E$ and $f\in C_c(E)$ we have 
\[
\int f(de)\, \dd \sigma^{s(d)}(e)=\int f(e)\, \dd \sigma^{r(d)}(e).
\]
The integrand on the left-hand side is the function $e\mapsto f(de)$. Thus, using \eqref{sigma again}
\begin{align*}
   \int f(de)\, \dd \sigma^{s(d)}(e)&= \sum_{\gamma\in s(d)G}\int_\TT f\big(d(z\cdot\mathfrak{s}(\gamma))\big)\, \dd\lambda(z)\\
   &= \sum_{\gamma\in s(d)G}\int_\TT f\big(z\cdot(d\mathfrak{s}(\gamma))\big)\, \dd\lambda(z).
   \intertext{Since $\pi\big( d\mathfrak{s}(\gamma) \big)=\pi(d)\gamma=\pi\big(\mathfrak{s}(\pi(d)\gamma)\big)$ there exists a unique $z_\gamma\in \TT$ such that $d\mathfrak{s}(\gamma)=z_\gamma\cdot \mathfrak{s}(\pi(d)\gamma)$. Together with our above computation, we get
   }
  \int f(de)\, \dd \sigma^{s(d)}(e)&= 
  \sum_{\gamma\in s(d)G}\int_\TT f\big(z\cdot \big( z_\gamma\cdot \mathfrak{s}(\pi(d)\gamma)\big)\big)\, \dd\lambda(z)\\
  &=\sum_{\gamma\in s(d)G}\int_\TT f\big( w\cdot \mathfrak{s}(\pi(d)\gamma)\big)\, \dd\lambda(w)\\
  \intertext{by the change of variable $w=zz_\gamma$. Since $\gamma\mapsto \pi(d)\gamma$ is a bijection of
$s(d)G$ onto $r(d)G$ we get}
  \int f(de)\, \dd \sigma^{s(d)}(e)&=\sum_{\alpha\in r(d)G}\int_\TT f\big( w\cdot \mathfrak{s}(\alpha)\big)\, \dd\lambda(w)
  =\int f(e)\, \dd \sigma^{r(d)}(e),
\end{align*}
as needed.

Next, we show that \[E\z   \to \CC, \quad x\mapsto \int f(e)\, \dd \sigma^x(e),\] is continuous for any fixed $f\in C_c (E)$. Let $x_n\to x$ in $E\z$; we need to show that 
\begin{equation}\label{cty Haar}
\Big|\sum_{\gamma\in x_nG}\int_\TT f(z\cdot\mathfrak{s}(\gamma))\, \dd\lambda(z)-\sum_{\alpha\in xG}\int_\TT f(z\cdot\mathfrak{s}(\alpha))\, \dd\lambda(z)\Big|\to 0
\end{equation}
as $n\to\infty$. We will show  that $\hat f\colon G\to \CC$ defined by $\hat{f} (\gamma)=\int_\TT f(z\cdot\mathfrak{s}(\gamma))\, \dd\lambda(z)$ is in $C_c(G)$; then \eqref{cty Haar} follows because the counting measures form a Haar system on $G$.

Fix $\epsilon>0$ and $\gamma\in G$, and suppose that $\gamma_n\to \gamma$ in $G$. 
Since $E$ is a twist over $G$ there exist a neighborhood $U_\gamma$ and  a continuous  $S\colon U_\gamma\to E$ such that $\pi\circ S=\id_{U_\gamma}$.  Let $\beta\in U_\gamma$. Then $\pi(\mathfrak{s}(\beta))=\beta=\pi(S(\beta))$, and hence there exists $z_\beta\in \TT$ such that $\mathfrak{s}(\beta)=z_\beta \cdot S(\beta)$. Now
\begin{equation}\label{eq:mfs replaced by S}
\hat f(\beta)=\int_\TT f(zz_\beta\cdot S(\beta))\, \dd\lambda(z) =\int_\TT f(w\cdot S(\beta))\, \dd\lambda(w)
\end{equation}
via the change of variable $zz_\beta\mapsto w$.  In other words, in the neighborhood $U_\gamma$ of $\gamma$, we were able to write $\hat{f}$ using the continuous but only locally defined section $S$ instead of  the not necessarily continuous but global section $\mathfrak{s}$.  For fixed $z\in\TT$, the function $\beta\mapsto f(z\cdot S(\beta))$ is continuous on $U_\gamma$. So there exists a neighborhood $V\subset U_\gamma$ of $\gamma$ such that
\[\eta\in V\implies  |f(z\cdot S(\eta))-f(z\cdot S(\gamma))|<\epsilon.
\]
Then for all  $n$ large enough such that $\gamma_n\in V\subset U_\gamma$, it follows with Equation~\eqref{eq:mfs replaced by S}
\begin{align*}
|\hat f(\gamma_n)-\hat f(\gamma)|&= \Big|\int_\TT\big( f(z\cdot S(\gamma_n))-f(z\cdot S(\gamma))\big)\, \dd\lambda(z)  \Big|\\
&\leq \int_\TT\big| f(z\cdot S(\gamma_n))-f(z\cdot S(\gamma))  \big|\dd\lambda(z) <\epsilon .
\end{align*}
Thus, $\hat f$ is continuous on $G$.  To see that $\hat f$ has compact support, suppose that $\hat f(\beta)\neq 0$. Then there exists $z\in\TT$ such that $z\cdot\mathfrak{s}(\beta)\in\supp f$ and so $\beta=\pi(z\cdot\mathfrak{s}(\beta))\in \pi(\supp f)$. It follows that $\hat f$ has support contained in $\pi(\supp f)$, which is compact since $f\in C_c (E)$.  Thus, $\hat f\in C_c(G)$ and \eqref{cty Haar} follows. We have shown that $\{\sigma^x : x\in G\z  \}$ is a Haar system.
\end{proof}

Given a twist $(E,\iota,\pi)$ over an \etale\ groupoid $G$, we will always choose the above Haar system on $E$ and therefore write $\Cst(E)$ and $\Cst_\red (E)$ instead of $\Cst(E,\sigma)$ and $\Cst_\red(E,\sigma)$.

\subsubsection{The $\Cst$-algebras associated to a twist}
Let $(E, \iota, \pi)$ be a twist over a \LCH\ and \etale\ groupoid $G$, let $\{\sigma^x : x\in G\z  \}$ be the left Haar system defined in \eqref{sigma} and let $\mathfrak{s}\colon G\to E$ be a  (not necessarily continuous) section  of $\pi$.
The full and reduced \emph{twisted groupoid $\Cst$-algebras}  $\Cst(E;G)$ and $\Cst_\red(E;G)$ are obtained from $\Cst(E)$ and $\Cst_\red(E)$ as follows. Set\footnote{In other places in the literature,  $C_c(E;G)$ is the subset of $C_c(E)$ whose elements satisfy $f(z\cdot e)=\bar{z}f(e)$ instead. Our convention is consistent with \cite{Clark-anHuef-RepTh,Kum:Diags, Muhly-Williams-CtsTr2}.}
\[
C_c(E;G)\coloneqq\{f\in C_c(E) : f(z\cdot e)=zf(e)\text{ for all $z\in\TT$ and $e\in E$}\}.
\]
It is straightforward to check that $C_c(E;G)$ is an ideal in $C_c(E)$: for example,  if $f\in C_c(E)$ and $g\in C_c(E;G)$, then for $e\in E$ and $z\in \TT$ we have
\begin{align*}
g*f(z\cdot e)&= \int_E g(d) f(d\inv (z\cdot e) ) \, \dd\sigma^{r(z\cdot e)}(d)\\
\intertext{which, using the change of variable $c=d\inv (z\cdot e)$ becomes}
&=\int_E g(z\cdot ec\inv ) f(c) \, \dd\sigma_{s( e)}(c)\\
&= z\int_E g(ec\inv ) f(c) \,  \dd\sigma_{s( e)}(c)\\
&=z(g*f)(e)
\end{align*} by another change of variable. Thus, $C_c(E; G)$  extends to an ideal $\Cst(E;G)$ of $\Cst(E)$.
By \cite[Lemma~3.3]{Renault:Reps}, the map $\Upsilon \colon C_c(E)\to C_c(E;G)$ defined by
\[
\Upsilon(f)(e)=\int_\TT f(z\cdot e)\bar z\, \dd z
\]
is the identity on $C_c(E;G)$, a surjective $*$-homomorphism which is continuous in the inductive limit topology, and hence extends to a  homomorphism $\Upsilon\colon \Cst(E)\to \Cst(E;G)$. Thus, $\Cst(E;G)$ is also a quotient of $\Cst(E)$, and hence is a direct summand.

Similarly, $\Cst_\red(E;G)$ is the completion of $C_c(E;G)$ in $\Cst_\red(E)$, and is a direct summand of $\Cst_\red(E)$.
Let \[L^2(Ex;Gx)=\{\xi\in L^2(Ex,\sigma_x) : \xi(z\cdot e)=z\xi(e) \text{\ for $e\in E$ and $z\in\TT$}\}.\] 
For each $x\in G\z  $, let $L^x\colon \Cst(E)\to B(L^2(Ex,\sigma_x))$  be the extension to $\Cst(E)$ of the representation defined at \eqref{Lrep}. We   write $\pi^x$ for the nondegenerate part of $L^x|_{\Cst(E;G)}$, so that 
\begin{equation}\label{repn pi}\pi^x\colon \Cst(E; G)\to B(L^2(Ex;Gx)).
\end{equation}
In particular, $\Cst_\red(E;G)$ is the completion of $C_c(E;G)$ with respect to the norm defined by \[
\|f\|_{\Cst_\red(E;G)}=\sup\{\|\pi^x(f)\| : x\in G\z \}\]
for $f\in C_c(E;G)$.

It is useful to note that the integral formula for convolution simplifies to a sum: for $f, g\in C_c(E;G)$   we have
\begin{align*}
    f*g(e)&=\int_E f(d)g(d\inv e)\,  \dd\sigma^{r(e)}(d)\\
    &=\sum_{\gamma\in r(e)G}\int_\TT f(z\cdot \mathfrak{s}(\gamma))\,g(\bar z\cdot \mathfrak{s}(\gamma)\inv e)\, \dd z\\
    &=\sum_{\gamma\in r(e)G}  f( \mathfrak{s}(\gamma))\, g(\mathfrak{s}(\gamma)\inv e)
    \end{align*}
   since $f,g$ are $\TT$-equivariant and the measure of $\TT$ is $1$. Similarly, for $\xi\in L^2(Ex;Gx)$ we have
    \[
    \|\xi\|^2=\int_E|\xi(e)|^2\, \dd\sigma_x(e)=\sum_{\gamma\in Gx}\int_\TT|\xi(z\cdot\mathfrak{s}(\gamma))|^2\, \dd(z)=\sum_{\gamma\in Gx}|\xi(\mathfrak{s}(\gamma))|^2.
\]

Finally, for $h\in C_c(G\z )$ define a function $f_h$ on $E$ by 
\begin{equation}\label{eq-diagonal}
f_h(e)=\begin{cases} z h(x) &\text{if $e = \iota(x, z)$ for some $(x,z)\in G\z \times\TT$}\\
0&\text{else.}\end{cases}
\end{equation} 
Then $f_{h}\in C_c(E;G)$ and $h\mapsto f_{h}$ extends to an isomorphism from $C_0(G\z )$ onto the closure of the $*$-subalgebra  $\{f\in C_c(E;G) : \supp f\subset\iota(G\z \times\TT )\}$ of $\Cst_\red(E;G)$.

\bigskip
We will use the following lemma.
\begin{lemma}[{\cite[Lemma 2.7]{BFRP:Twisted}}]\label{lem-right-up} 
Let $G$ be a \LCH\  and  \etale\ groupoid and  let $(E, \iota, \pi)$ be a twist over $G$. Suppose that $H$ is an open subgroupoid of~$G$. Then the open subgroupoid $\pi\inv (H)$ of $E$ gives a twist  $(\pi\inv (H), \iota|_{H\z \times\mathbb{T}}, \pi|_{\pi\inv (H)})$ over $H$ and there is an injective  homomorphism from $\Cst_\red(\pi\inv (H);H)$ into $\Cst_\red(E;G)$ induced by inclusion and extension by zero.
\end{lemma}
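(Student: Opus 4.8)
The plan is to first verify the purely groupoid-theoretic claim, then construct the claimed $\Cst$-algebra homomorphism, and finally check injectivity via the reduced representations. For the first part, since $H$ is an open subgroupoid of $G$ with unit space $H\z = H \cap G\z$ open in $G\z$, the preimage $\pi\inv(H)$ is an open subset of $E$ (as $\pi$ is continuous), and it is a subgroupoid because $\pi$ is a groupoid homomorphism. The restriction $\iota|_{H\z \times \TT}$ maps onto $\pi\inv(H\z)$, which equals $(\pi|_{\pi\inv(H)})\inv((\pi\inv(H))\z)$; it remains a homeomorphism onto its (open) image, and centrality is inherited from $E$. Openness of $\pi|_{\pi\inv(H)}$ follows exactly as in the proof of \Cref{restriction of twist non-etale}: an open set in $\pi\inv(H)$ is $U' \cap \pi\inv(H)$ for $U'$ open in $E$, and its image is $\pi(U') \cap H$, open in $H$. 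Since $H$ is open in the \etale\ groupoid $G$, it is itself \etale\ (the counting measures restrict to a Haar system), so the induced Haar system of \Cref{Haar system} on $\pi\inv(H)$ is visibly the restriction of the one on $E$: for $x \in H\z$, the sum in \eqref{sigma again} over $\gamma \in xH$ is a sub-sum of the one over $\gamma \in xG$.

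For the homomorphism, I would note that an element $f \in C_c(\pi\inv(H); H)$ extends by zero to a function on $E$; since $\pi\inv(H)$ is open in $E$, this extension is continuous and compactly supported, lies in $C_c(E)$, and satisfies the $\TT$-equivariance condition, so it lies in $C_c(E;G)$. This gives an inclusion $\iota_* \colon C_c(\pi\inv(H);H) \hookrightarrow C_c(E;G)$. That $\iota_*$ is a $*$-homomorphism is immediate from the convolution formula: because $H$ is a subgroupoid, for $e \in \pi\inv(H)$ the sum $f * g(e) = \sum_{\gamma \in r(e)G} f(\mathfrak{s}(\gamma)) g(\mathfrak{s}(\gamma)\inv e)$ only receives nonzero contributions from $\gamma \in r(e)H$ (otherwise $\mathfrak{s}(\gamma)\inv e \notin \pi\inv(H)$), and for $e \notin \pi\inv(H)$ every term vanishes; involution is preserved since $\pi\inv(H)$ is inverse-closed.

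It remains to show $\iota_*$ is isometric for the reduced norms, hence extends to an injective homomorphism $\Cst_\red(\pi\inv(H);H) \to \Cst_\red(E;G)$. The key observation is that for $x \in H\z$, the Hilbert space $L^2(\pi\inv(H)x; Hx)$ embeds isometrically as a subspace of $L^2(Ex; Gx)$ (the subspace of functions supported on $\pi\inv(H)x = \{e \in Ex : r(e) \in H\z\}$, which is open in $Ex$), and this subspace is invariant under $\pi^x(\iota_*(f))$ for $f \in C_c(\pi\inv(H);H)$, with the restriction of $\pi^x(\iota_*(f))$ to this subspace equal to the corresponding reduced representation $\pi^x_H(f)$ of $\Cst_\red(\pi\inv(H);H)$ — this is again a direct check using the summation formula for the action on $L^2$, since convolving a function supported on $\pi\inv(H)x$ by $\iota_*(f)$ produces a function supported on $\pi\inv(H)x$ and the formula matches that of $\pi^x_H$. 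Consequently $\|\pi^x_H(f)\| \le \|\pi^x(\iota_*(f))\| \le \|\iota_*(f)\|_{\Cst_\red(E;G)}$ for each $x \in H\z$, giving $\|f\|_{\Cst_\red(\pi\inv(H);H)} \le \|\iota_*(f)\|_{\Cst_\red(E;G)}$. For the reverse inequality one uses that for $x \in G\z \setminus H\z$ lying in an orbit meeting $H\z$ one gets nothing new, and for $x \in H\z$ the representation $\pi^x$ restricted to $\iota_*(C_c(\pi\inv(H);H))$ is unitarily equivalent to a multiple/summand of representations of $\Cst_\red(\pi\inv(H);H)$ — more precisely, decomposing $L^2(Ex;Gx)$ according to which $H$-orbit (inside the $G$-orbit of $x$) each $\gamma \in Gx$ lands in shows $\pi^x \circ \iota_*$ is a direct sum of copies of reduced representations $\pi^y_H$ of $\Cst_\red(\pi\inv(H);H)$ for various $y \in H\z$, so $\|\pi^x(\iota_*(f))\| \le \|f\|_{\Cst_\red(\pi\inv(H);H)}$. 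Combining the two bounds yields $\|\iota_*(f)\|_{\Cst_\red(E;G)} = \|f\|_{\Cst_\red(\pi\inv(H);H)}$, so $\iota_*$ is isometric and extends to the desired injective homomorphism. The main obstacle is this last equivalence-of-representations bookkeeping — tracking how a $G$-orbit decomposes into $H$-orbits and checking the representations match up under the resulting direct-sum decomposition of $L^2(Ex;Gx)$; since this is cited as \cite[Lemma 2.7]{BFRP:Twisted}, I would either reproduce that argument or simply invoke it.
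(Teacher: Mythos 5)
The paper itself gives no proof of this lemma: it is imported verbatim as \cite[Lemma 2.7]{BFRP:Twisted}, so there is no in-paper argument to compare against. Your sketch is essentially the standard proof of that cited result, and it is sound in outline: extension by zero on the open, $\TT$-invariant subgroupoid $\pi\inv(H)$ is a $*$-homomorphism at the $C_c$-level; the subspace of $L^2(Ex;Gx)$ of functions supported on $\pi\inv(H)x$ is invariant and carries a copy of $\pi^x_H$, giving $\|f\|_{\Cst_\red(\pi\inv(H);H)}\leq\|\iota_*(f)\|_{\Cst_\red(E;G)}$; and the reverse inequality comes from decomposing each $\pi^x\circ\iota_*$ into copies of the representations $\pi^y_H$, $y\in H\z$.

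Two points should be tightened. First, the parenthetical identification $\pi\inv(H)x=\{e\in Ex : r(e)\in H\z\}$ is false for a general open subgroupoid $H$ (take $H=G\z$: then $\pi\inv(H)x=\iota(\{x\}\times\TT)$ while the right-hand side is all of $Ex$); it holds only when $H=\restrict{G}{H\z}$. Your argument does not actually need it: openness of $\pi\inv(H)x$ in $Ex$ and invariance of the corresponding subspace under $\pi^x(\iota_*(f))$ suffice, and both hold. Second, in the contractivity direction the correct decomposition of $Gx$ is by the equivalence relation $\alpha\sim\beta\iff\alpha\beta\inv\in H$ on $\{\alpha\in Gx : r(\alpha)\in H\z\}$, together with the remaining block (spanned by those $\alpha$ with $r(\alpha)\notin H\z$) on which $\pi^x(\iota_*(f))$ acts as zero; grouping merely by the $H$-orbit of $r(\gamma)$ is coarser, and those blocks are direct sums of copies of the $\pi^y_H$ rather than single copies. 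For a $\sim$-class $C$ with a chosen $\alpha_0\in C$ and $y\coloneqq r(\alpha_0)\in H\z$, the map $\alpha\mapsto\alpha\alpha_0\inv$ is a bijection of $C$ onto $Hy$, and right translation by a lift of $\alpha_0$ in $E$ is a unitary intertwining the corresponding block of $\pi^x(\iota_*(f))$ with $\pi^y_H(f)$; this treats all $x\in G\z$ uniformly, including $x\in G\z\setminus H\z$ and those whose orbit misses $H\z$ (where the operator is simply zero). With these adjustments the bookkeeping you defer goes through and the two inequalities combine exactly as you state.
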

The above twist over $H$ is often denoted by $E_H$. But to avoid confusion with $\restrict{E}{W}=r\inv (W)\cap s\inv (W)$ for $W\subset E\z$, we use the notation $\pi\inv(H)$ instead.

\subsection{Topological, nuclear, and dynamic asymptotic dimension}

Nuclear dimension and its predecessor, decomposition rank, arose as key structural properties in the classification program for separable nuclear $\Cst$-algebras.  Both were modeled on Lebesgue covering dimension for topological spaces. 

Let $X$ be a topological space.  An open cover $\mathcal{U}$ of $X$ has \emph{order} $m$ if each element of $X$ belongs to at most $m$ elements of $\mathcal{U}$, i.e., $\bigcap_{i=1}^{m+1} U_i=\emptyset$ for any distinct $U_{1},\ldots ,U_{m+1}\in \mathcal{U}$. The \emph{Lebesgue} (or \emph{topological}) \emph{covering dimension} of $X$, written $\dim X$, is the  smallest integer~$N$ such that every open cover of $X$ admits an open refinement of order $N+1$.

We let $\cX\coloneqq X\cup \{\infty\}$ denote the \emph{Alexandrov} (or \emph{minimal one-point}) \emph{compactification} of the topological space $X$. Its topology consists of open sets in $X$ and all sets of the form $(X\setminus K)\cup\{\infty\}$, where $K$ is 
a closed and compact subset of $X$, making $\cX$ compact and $X$ an open subspace. If $X$ is locally compact and Hausdorff, then  $\cX$ is Hausdorff.
We will need the following \namecref{dimension of unitization}, which may be well known but does not seem to be well documented. 

\begin{lemma}\label{dimension of unitization}
Let $X$ be a second-countable, locally compact and Hausdorff space, and let $\cX $ be its one-point compactification. Then $\dim X=\dim \cX $.
\end{lemma}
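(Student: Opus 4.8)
The plan is to prove both inequalities $\dim X \leq \dim \cX$ and $\dim \cX \leq \dim X$ separately, noting that the case $\dim X = \infty$ is trivial for the first and the case $\dim \cX = \infty$ for the second, so we may assume the relevant dimension is finite. The inequality $\dim X \leq \dim \cX$ is the easy direction: $X$ is an open subspace of $\cX$, and topological covering dimension is monotone under passing to open (indeed arbitrary, for well-behaved spaces, but open suffices here and is elementary) subspaces. Concretely, given an open cover $\mathcal U$ of $X$, each $U \in \mathcal U$ is open in $\cX$, so $\mathcal U \cup \{X\}$... no — rather, one extends to a cover of $\cX$ by adjoining one open set containing $\infty$; but the cleanest route is the standard fact that $\dim$ of an open subspace of a normal (or just: of any) space is at most $\dim$ of the whole space. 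Since $\cX$ is compact Hausdorff hence normal, and $X$ is open in it, $\dim X \leq \dim \cX$ follows from the subspace theorem for covering dimension of normal spaces (or, since $X$ is second-countable locally compact Hausdorff, it is metrizable and separable, and one can invoke the coincidence of covering dimension with other dimension functions for which monotonicity is standard).

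The substantive direction is $\dim \cX \leq \dim X$. Assume $\dim X = N < \infty$. The idea is: $\infty$ has a neighborhood base of sets of the form $(X \setminus K) \cup \{\infty\}$ with $K$ compact, and near $\infty$ the space $\cX$ looks like "$X$ minus a compact set plus a point", which should not increase dimension because the extra point is, in a suitable sense, absorbed. I would argue as follows. Let $\mathcal V$ be a finite... no, let $\mathcal V$ be an arbitrary open cover of $\cX$; since $\cX$ is compact we may assume $\mathcal V$ is finite, $\mathcal V = \{V_0, V_1, \dots, V_k\}$ with $\infty \in V_0$. Then $K := \cX \setminus V_0$ is a compact subset of $X$. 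The sets $V_1 \cap X, \dots, V_k \cap X$ together with $V_0 \cap X$ form an open cover of $X$; restrict attention to $K$: the sets $V_i \cap X$ for $i = 1, \dots, k$ cover a neighborhood... actually $\{V_1, \dots, V_k\}$ need not cover $K$ — wait, they do cover $K$ since $K \cap V_0 = \emptyset$ so $K \subseteq V_1 \cup \dots \cup V_k$. Now choose, using local compactness and that $K$ is compact, an open set $W$ in $X$ with $K \subseteq W \subseteq \overline{W} \subseteq V_1 \cup \dots \cup V_k$ and $\overline{W}$ compact. Apply $\dim X \leq N$ to the open cover $\{V_1 \cap X, \dots, V_k \cap X, X \setminus \overline{W'}\}$ for a slightly smaller compact neighborhood, or more cleanly to an open cover of the compact set $\overline W$ (using that $\dim$ of a closed subspace is $\leq \dim X$), to obtain a finite open refinement $\mathcal W = \{W_1, \dots, W_m\}$ of order $\leq N+1$ covering $\overline W$, with each $W_j$ contained in some $V_i$, $i \geq 1$, and each $W_j \subseteq W$ (shrink if necessary so the $W_j$ stay inside $W$). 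Then $\mathcal W \cup \{V_0\}$ is an open cover of $\cX$: it covers $K$ via $\mathcal W$ and everything else via $V_0$; it refines $\mathcal V$; and its order is $\leq N+2$? That is off by one — the point $\infty$ lies only in $V_0$, but a point of $\overline W$ could lie in $V_0$ and in up to $N+1$ of the $W_j$. To fix the off-by-one, arrange at the outset that $\mathcal W$ covers not $\overline W$ but $K$ with the additional property that each $W_j$ is disjoint from a fixed closed neighborhood of $\infty$ inside $V_0$: replace $V_0$ by $V_0' := \cX \setminus \overline{W''}$ where $K \subseteq W'' \subseteq \overline{W''} \subseteq W$, so that $\mathcal W$ (chosen to cover $\overline{W''}$ with members inside $W''$) and $\{V_0'\}$ have disjoint... no, still a point of $\overline{W''} \setminus K$ could be in $V_0'$? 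No: $V_0' = \cX \setminus \overline{W''}$ is disjoint from $\overline{W''}$, so no point of $\overline{W''}$ lies in $V_0'$, hence the order of $\mathcal W \cup \{V_0'\}$ equals $\max(\mathrm{ord}\,\mathcal W, 1) \leq N+1$, and $\{V_0'\} \cup \mathcal W$ refines $\{V_0\} \cup \mathcal V \supseteq \mathcal V$... but I need it to refine $\mathcal V$, and $V_0' \subseteq V_0$, $W_j \subseteq$ some $V_i$, so yes. This gives $\dim \cX \leq N$.

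The main obstacle, and the place requiring care, is exactly this order bookkeeping: one must separate the "compact core" $K$ (handled by a dimension-$(N{+}1)$ refinement drawn from $\{V_1, \dots, V_k\}$ and pushed strictly inside a neighborhood $W''$ of $K$) from the "neighborhood of infinity" (handled by the single set $V_0' = \cX \setminus \overline{W''}$), ensuring these two families are *disjoint as subsets of $\cX$* so that orders simply take the maximum rather than adding. Second-countability and local compactness of $X$ are used to produce the nested compact neighborhoods $K \subseteq W'' \subseteq \overline{W''} \subseteq W \subseteq V_1 \cup \dots \cup V_k$ with $\overline{W''}$ compact, and to justify applying the finite-cover characterization of $\dim$ on the compact metrizable space $\overline{W''}$ together with the closed-subspace monotonicity $\dim \overline{W''} \leq \dim X = N$. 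I would also remark that second-countability guarantees $X$ (hence $\cX$) is metrizable, so all the standard covering-dimension theorems (monotonicity for subspaces, the finite-cover criterion on compacta, countable sum theorems) are available without set-theoretic caveats, which is why the hypothesis appears in the statement.
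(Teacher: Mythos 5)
Your first inequality is fine once it is routed through metrizability: $\cX$ is second-countable, compact and Hausdorff, hence metrizable, and covering dimension is monotone on arbitrary subspaces of metrizable spaces (normality of $\cX$ alone would not justify monotonicity for the \emph{open} subspace $X$, but the metrizable route you mention as an alternative is exactly what is needed, and is also how the paper argues). The genuine gap is in the substantive direction $\dim\cX\le\dim X$, precisely at your ``fix'' of the off-by-one. Your final family is $\mathcal W\cup\{V_0'\}$ with $V_0'=\cX\setminus\overline{W''}$, where you require simultaneously that every member of $\mathcal W$ be contained in $W''$ and that $\mathcal W$ cover $\overline{W''}$. These two requirements are incompatible unless $\overline{W''}=W''$ is clopen: open sets contained in $W''$ cannot contain any point of the boundary $\overline{W''}\setminus W''$, so that boundary is covered neither by $\mathcal W$ nor by $V_0'$, and your family fails to cover $\cX$. (Take $X=\RR^2$ and $K$ a closed disc: there is no compact open set containing $K$, so no choice of $W''$ makes the construction work.) If instead you let the members of $\mathcal W$ spill into the larger set $W$, coverage is restored but the order problem you yourself flagged reappears on $W\setminus\overline{W''}$, where a point can lie in $V_0'$ and in $N+1$ members of $\mathcal W$, giving order $N+2$. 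So the tension between ``cover up to the boundary'' and ``stay disjoint from the set at infinity'' is not resolved; this is a missing idea, not bookkeeping.

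Two ways to close it. The paper's route avoids cover manipulation entirely: in the metrizable space $\cX$ the open set $X$ is an $F_\sigma$, $\{\infty\}$ is closed, both pieces have dimension at most $\dim X$, and a sum theorem for $F_\sigma$ subsets (Pears, Chapter~3, Proposition~5.3) gives $\dim\cX\le\dim X$. If you want to keep your direct argument, you need an absorption step instead of a disjointness step: first shrink $V_1,\dots,V_k$ to precompact open sets still covering $K$ (possible since $K$ is compact and $X$ is locally compact); refine the resulting cover of $X$ to a family $\mathcal U$ of order at most $N+1$, with each member assigned to one of $V_0\cap X,V_1,\dots,V_k$; choose an open neighborhood $O\subseteq V_0$ of $\infty$ disjoint from the compact closure of the union of all members assigned to $V_1,\dots,V_k$; finally replace $O$ by $O'=O\cup\bigcup\{U\in\mathcal U: U\cap O\neq\emptyset\}$ and delete those $U$ from $\mathcal U$. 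Every deleted $U$ is assigned to $V_0$, so $O'\subseteq V_0$ and the refinement property survives, and a short count (any point of $O'$ lies either in $O$ and in no retained member, or in a deleted member, which frees one slot) shows the order is still at most $N+1$. Either way an ingredient beyond your construction is required.
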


\begin{proof}
Since $\cX $ is second-countable, compact and Hausdorff, it is normal \cite[Theorem~32.1]{Munkres} and metrizable \cite[Theorem~34.1]{Munkres}. Now $X$ is a
subspace
of the metrizable space $\cX $, and hence $\dim X\leq \dim \cX $ \cite[Theorem~1.8.3]{Coornaert}. Further, in a metrizable space every open set is an F$_\sigma$ set \cite[Lemma~1.8.2]{Coornaert}. Now $\cX =X\cup\{\infty\}$ is a union of two F$_\sigma$ sets of dimension at most~$\dim X$. Thus, $\dim \cX \leq\dim X$ by \cite[Proposition~5.3, Chapter 3]{Pears:dim}.
\end{proof}

Now let $A$ and $B$ be $\Cst$-algebras. Recall that a linear map $\phi\colon A\to B$ is {\em positive} if it maps positive elements in $A$ to positive elements in $B$. It is {\em completely positive} (c.p.) if this also holds for all matrix amplifications $\phi^{(n)}\colon M_n(A)\to M_n(B)$ where $\phi^{(n)}([a_{ij}])=[\phi(a_{ij})]$. A completely positive norm-decreasing map is called {\em completely positive contractive} (\cpc).  A c.p.\ map  $\phi\colon A\to B$ between $\Cst$-algebras is called \emph{order zero} if for any 
    positive elements $a,b\in A$
with $ab=0$, we have $\phi(a)\phi(b)=0$.

One indispensable result for \cpc\ maps is Arveson's extension theorem \cite{Arv69}. For easy reference, we state 
the particular consequence that
we use in this article. 
\begin{thm}[Arveson]\label{thm: arv for us}
Let $A$ and $D$ be unital $\Cst$-algebras, and let $B\subset A$ be a $\Cst$-subalgebra. Then any \cpc\ map $\varphi\colon B\to D$ extends to a \cpc\ map $\psi\colon  A\to D$. 
\end{thm}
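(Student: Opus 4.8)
The plan is to deduce this from Arveson's original extension theorem, which handles the case where the target is the algebra $B(H)$ of bounded operators on a Hilbert space: if $B\subseteq A$ is an inclusion of unital $\Cst$-algebras and $\varphi\colon B\to B(H)$ is \cpc, then there is a \cpc\ map $\psi\colon A\to B(H)$ with $\psi|_{B}=\varphi$ \cite{Arv69}. Moving between the unital, merely positive, and contractive normalizations of this statement is routine, and I would not dwell on it; the cleanest route is to invoke the unital completely positive version and then compress.

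Next I would pass from $B(H)$ to the target $D$ in the statement. The extension into $D$ itself, rather than into all of $B(H)$, forces $D$ to be injective; this is automatic in every application in this paper, where $D$ is finite-dimensional, say $D\cong\bigoplus_{i}M_{n_i}$ (it need not hold for a general unital $\Cst$-algebra, e.g.\ not for $D=C([0,1])$). Realizing $D$, via its identity representation, as the block-diagonal unital $\Cst$-subalgebra of $B(H)$ with $H=\bigoplus_{i}\CC^{n_i}$, the block-diagonal compression $E\colon B(H)\to D$, $E(x)=\sum_i p_i x p_i$ with $p_i$ the projection onto the $i$-th summand, is a unital completely positive map whose range is exactly $D$ and which is the identity on $D$. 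So, given a \cpc\ map $\varphi\colon B\to D$, I would regard it as a \cpc\ map into $B(H)$, extend it to a \cpc\ map $\tilde\psi\colon A\to B(H)$ by the case above, and set $\psi\coloneqq E\circ\tilde\psi$. Then $\psi$ is \cpc\ as a composition of a \cpc\ map with a unital completely positive one, and $\psi|_{B}=E\circ\varphi=\varphi$ since $\varphi$ already lands in $D$ and $E$ fixes $D$ pointwise.

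The step I expect to need the most care is exactly this normalization/target issue: the unital-to-\cpc\ reduction must be carried out at the level of $B(H)$, \emph{before} composing with $E$ (compressing a non-contractive map would not help), and one should make explicit that a conditional-expectation-type map $B(H)\to D$ is available only because the $D$'s occurring here are injective. Granting Arveson's theorem for $B(H)$, everything else is formal.
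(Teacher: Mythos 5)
Your argument is correct for the targets that actually occur in this paper, and it takes a genuinely different route from the paper's. The paper's proof concentrates entirely on the domain side: it distinguishes the cases $1_A\in B$, $B$ non-unital, and $1_B\in B$ but $1_A\notin B$ (the last handled via the ``forced unitization'' $\Cst(B,1_A)\cong B\oplus\CC$ and the \cpc\ map $b\oplus\lambda\mapsto\varphi(b)$), reduces each to an extension problem from a unital subalgebra, and then cites \cite[Theorem~1.6.1]{BrOz:FinDimApp}. You instead take the $B(H)$-valued extension theorem as a black box and repair the target side by compressing with the block-diagonal conditional expectation $E\colon B(H)\to D$ onto a finite-dimensional $D$. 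The two treatments are complementary: the ``routine normalization'' you decline to spell out is exactly the content of the paper's case analysis on units of $B$ (the only mildly delicate case being $1_B\in B$ with $1_A\notin B$, which is why the paper introduces the forced unitization), while the injectivity point you insist on is exactly what the paper's citation glosses over, since \cite[Theorem~1.6.1]{BrOz:FinDimApp} is stated with target $B(H)$ rather than $D$. Your further observation is correct and worth recording: as stated, with an arbitrary unital $D$, the theorem fails (e.g.\ for $D=C([0,1])$), and extendability into $D$ for all inclusions $B\subset A$ characterizes injectivity of $D$; since every application in the paper has $D$ finite-dimensional, your composition $\psi=E\circ\tilde\psi$ both proves the statement in the generality actually needed and supplies a step that the paper's sketch leaves implicit.
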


In case $1_A\in B$, the reader is referred to \cite[Theorem 1.6.1]{BrOz:FinDimApp} for a proof. If $B$ is not unital, then we may replace $B$ with $\Cst(B,1_A)$, which is canonically isomorphic to the minimal unitization $\widetilde{B}$ of $B$, and replace $\varphi$ with its \cpc\ unitization $\tilde{\varphi}\colon \Cst(B,1_A)\to D$ given by $\tilde{\varphi}(x+\lambda 1_A)=\varphi(x)+ \lambda 1_D$ for $x \in B$, $\lambda \in \CC$, as in \cite[Proposition 2.2.1]{BrOz:FinDimApp}. If $1_A\notin B$ but $B$ still has a unit $1_B\in B$, then we consider the ``forced unitization'' $\Cst(B,1_A)\cong B\oplus \CC$. Since the map $B\oplus \CC \to D$ given by $b\oplus \lambda\mapsto \varphi(b)$ is clearly \cpc, it follows that there exists a \cpc\ extension $\tilde{\varphi}\colon \Cst(B,1_A)\to D$ of $\varphi$. Then we may apply \cite[Theorem 1.6.1]{BrOz:FinDimApp} to extend $\tilde{\varphi}\colon \Cst(B,1_A)\to D$ to a \cpc\ map $\psi\colon A\to D$ where $\psi|_B=\tilde{\varphi}|_B=\varphi$.

\begin{defn}[{\cite[Definition 2.1]{WinterZacharias:nuclear}}]\label{def 2.1 WZ}
Let $A$ be a $\Cst$-algebra and let $d\in \NN$. Then $A$ has \emph{nuclear dimension} at most $d$, written $\dim_{\text{nuc}}(A)\leq d$, if for any finite subset $\Ff\subset A$ and $\varepsilon>0$, there exist a finite-dimensional $\Cst$-algebra $F=\bigoplus_{j=0}^d F_j$, a \cpc\ map $\psi\colon A\to F$ and a c.p.\ map $\phi\colon F\to A$ such that $\phi_j\coloneqq \phi|_{F_j}$ is \cpc\ and  order zero for each $0\leq j\leq d$, and  for all $a\in \Ff$,
\[\|\phi(\psi(a))-a\|<\varepsilon.\]
\end{defn}

A subtle yet stark strengthening of nuclear dimension is decomposition rank, where we add the assumption that $\phi$, and not just each $\phi_j$, is contractive:
\begin{defn}[{\cite[Definition~3.1]{KirWin}}]
 A $\Cst$-algebra $A$ has \emph{decomposition rank}  at most  $d\in\NN$, written $\dr(A)\leq d$,  if for any finite subset $\Ff\subset A$ and $\varepsilon>0$, there exists a finite-dimensional $\Cst$-algebra $F=\bigoplus_{j=0}^d F_j$ and \cpc\ maps $\psi\colon A\to F$ and $\phi\colon F\to A$ such that  $\phi_j\coloneqq\phi|_{F_j}$ is order zero for each $0\leq j\leq d$ and such that for all $a\in \Ff$,
 \[\|\phi(\psi(a))-a\|<\varepsilon.\]
\end{defn}

There are numerous reformulations of nuclear dimension and decomposition rank. In \cite[Definition 8.1]{GWY:2017:DAD}, for example,  \citeauthor{GWY:2017:DAD} use a version of finite nuclear dimension which is also more fitting to our purposes.  For the reader's benefit, we provide a proof that \cite[Definition 8.1]{GWY:2017:DAD} and \Cref{def 2.1 WZ} are equivalent. 

\begin{lemma}
\label{lemma: Same defns}
Let $A$ be a $\Cst$-algebra and let $d\in \NN$. Then \mbox{$\dim_{\nuc}(A)\leq d$} if and only if for any finite subset $\Ff\subset A$ and $\varepsilon>0$, there exist finite-dimensional $\Cst$-algebras $F_0,\ldots,F_d$ and \cpc\ maps $A\xrightarrow{\psi_j}F_j \xrightarrow{\phi_j}A$ with each $\phi_j$ order zero   such that for all $a\in \Ff$,
 \[\left\|\textstyle{\sum_{j=0}^d} (\phi_j\circ \psi_j)(a)-a\right\|<\varepsilon.\]
\end{lemma}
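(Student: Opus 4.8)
The two formulations differ only in whether one packages the target finite-dimensional algebras as a single direct sum $F = \bigoplus_{j=0}^d F_j$ with one \cpc\ map $\psi\colon A \to F$ and one c.p.\ map $\phi\colon F\to A$ (as in \Cref{def 2.1 WZ}), or as separate pieces $F_j$ with separate \cpc\ maps $\psi_j$ and $\phi_j$ (as in \cite[Definition~8.1]{GWY:2017:DAD}). The plan is to prove both implications by translating between these two packagings, the main subtlety being the normalization of $\psi$ and the $\phi_j$ in the ``separate pieces'' picture: there each $\psi_j$ is required to be contractive and each $\phi_j$ order zero, but $\phi_j$ need not be contractive, exactly mirroring the condition on $\phi|_{F_j}$ in \Cref{def 2.1 WZ}.

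\textbf{From \Cref{def 2.1 WZ} to the ``separate pieces'' form.} Given $\Ff$ and $\varepsilon$, take $F = \bigoplus_{j=0}^d F_j$, $\psi\colon A\to F$ \cpc, and $\phi\colon F\to A$ c.p.\ with each $\phi_j := \phi|_{F_j}$ \cpc\ order zero, as provided by \Cref{def 2.1 WZ}. Let $\psi_j := \iota_j^*\circ\psi\colon A\to F_j$ be the compression of $\psi$ to the $j$-th summand, i.e.\ the composition of $\psi$ with the canonical (norm-decreasing, c.p.) projection $F\to F_j$. Each $\psi_j$ is then \cpc, and since the summands of $F$ are orthogonal, $\phi(\psi(a)) = \sum_{j=0}^d \phi_j(\psi_j(a))$ for every $a\in A$. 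Hence $\bigl\|\sum_{j=0}^d (\phi_j\circ\psi_j)(a) - a\bigr\| = \|\phi(\psi(a)) - a\| < \varepsilon$ for all $a\in\Ff$, which is exactly the required estimate.

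\textbf{From the ``separate pieces'' form to \Cref{def 2.1 WZ}.} Conversely, given $\Ff$, $\varepsilon$ and \cpc\ maps $A\xrightarrow{\psi_j}F_j\xrightarrow{\phi_j}A$ with each $\phi_j$ order zero, set $F := \bigoplus_{j=0}^d F_j$, define $\phi\colon F\to A$ by $\phi((x_j)_j) := \sum_{j=0}^d \phi_j(x_j)$, and define $\psi\colon A\to F$ by $\psi(a) := (\psi_j(a))_j$. Then $\phi$ is c.p.\ (a finite sum of c.p.\ maps, defined on the direct sum) with $\phi|_{F_j} = \phi_j$ \cpc\ order zero for each $j$. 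The only point needing care is that $\psi$ as defined is c.p.\ but a priori only $d{+}1$-contractive rather than contractive; this is fixed by a standard rescaling. Replace each $\psi_j$ by $(d+1)^{-1}\psi_j$ and each $\phi_j$ by $(d+1)\phi_j$: scaling by a positive constant preserves both ``\cpc'' for $\psi_j$ (as long as we scale down) and ``order zero'' for $\phi_j$ (order zero is preserved under multiplication by nonnegative scalars), the products $\phi_j\circ\psi_j$ are unchanged, and now $\psi = (\psi_j)_j$ satisfies $\|\psi(a)\| = \max_j \|(d+1)^{-1}\psi_j(a)\| \le \|a\|$, i.e.\ $\psi$ is \cpc. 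With this normalization, $\phi(\psi(a)) = \sum_{j=0}^d \phi_j(\psi_j(a))$ and the estimate carries over verbatim, so $\dim_{\nuc}(A)\le d$ in the sense of \Cref{def 2.1 WZ}.

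\textbf{Main obstacle.} There is no real obstacle here; the content is entirely the bookkeeping around how the finite-dimensional algebra and the contractivity normalization are bundled. The one point worth stating explicitly (and the only place a reader might stumble) is the rescaling argument showing that orthogonality of the summands lets one pass freely between a single pair $(\psi,\phi)$ and a family $(\psi_j,\phi_j)$ without disturbing either the order-zero property of the $\phi_j$ or the contractivity of $\psi$; once that is noted, both directions are immediate.
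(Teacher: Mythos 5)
Your first direction matches the paper's proof exactly (the paper compresses $\psi$ by the unit of each summand, $\psi_j(\cdot)=1_{F_j}\psi(\cdot)1_{F_j}$, which is the same as composing with the canonical projection). The problem is in your second direction, and specifically in the rescaling step, which is based on a false premise and then introduces a real error. The map $\psi=(\psi_j)_j\colon A\to\bigoplus_{j=0}^d F_j$ is \emph{already} contractive without any rescaling: the $\Cst$-norm on the direct sum is the maximum of the coordinate norms, so $\|\psi(a)\|=\max_j\|\psi_j(a)\|\leq\|a\|$ --- your own displayed computation uses exactly this max norm, so your claim that $\psi$ is a priori only $(d{+}1)$-contractive is inconsistent with it. Worse, your proposed fix of replacing $\phi_j$ by $(d+1)\phi_j$ breaks the verification of \Cref{def 2.1 WZ}: that definition requires $\phi|_{F_j}$ to be \cpc\ (contractive) and order zero, and $(d+1)\phi_j$ is in general no longer contractive. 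So as written you have not produced data satisfying \Cref{def 2.1 WZ}, even though the intermediate estimate is fine.

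The repair is simply to delete the rescaling, which recovers the paper's argument verbatim: set $F=\bigoplus_j F_j$, $\psi=\oplus_j\psi_j$ (cpc by the max-norm observation), extend each $\phi_j$ to $F$ by zero off $F_j$, and put $\phi=\sum_j\phi_j$; then $\phi$ is c.p., each $\phi|_{F_j}=\phi_j$ is \cpc\ order zero, and $\|\phi(\psi(a))-a\|=\bigl\|\sum_{j=0}^d\phi_j(\psi_j(a))-a\bigr\|<\varepsilon$. (Note that \Cref{def 2.1 WZ} does not require $\phi$ itself to be contractive --- that stronger requirement is decomposition rank --- so no normalization of $\phi$ is needed at all.)
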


\begin{proof}

Fix $\Ff\subset A$ finite and $\varepsilon>0$. 

First, assume that $\dim_\nuc(A)\leq d$. Then there exist a finite-dimensional $\Cst$-algebra $F$, c.p.\ maps $A\xrightarrow{\psi}F \xrightarrow{\phi}A$ such that $\psi$ is \cpc, $F$ decomposes as $\bigoplus_{j=0}^d F_j$, $\phi_j\coloneqq \phi|_{F_j}$ is \cpc\ and order zero for each $0\leq j\leq d$, and for all $a\in \Ff$,
\begin{align*}
    \| \phi(\psi(a))-a\|&<\varepsilon.
\end{align*}
For each $0\leq j\leq d$, let  $1_{F_j}$ be the projection onto the $j^\text{th}$ summand and set $\psi_j(\cdot)\coloneqq 1_{F_j} \psi (\cdot)1_{F_j}$. Then each $\psi_j$ is \cpc, and for $a\in \Ff$, 
\[\psi(a)=\sum_{j=0}^d \psi_j(a) =\bigoplus_{j=0}^d \psi_j(a), 
\]
so that 
\[\varepsilon>\|\phi(\psi(a))-a\|=\Big\|\phi\Big(\sum_{j=0}^d \psi_j(a)\Big)-a\Big\|=\big\| \sum_{j=0}^d (\phi_j\circ \psi_j)(a) -a\big\|, 
\]
as needed.

\medskip

Second, assume that there exist finite-dimensional $\Cst$-algebras $F_0,\ldots,F_d$ and \cpc\ maps $A\xrightarrow{\psi_j}F_j \xrightarrow{\phi_j}A$ with each $\phi_i$ order zero  such that for all $a\in \Ff$,
 \begin{align*}
     \left\|\textstyle{\sum_{j=0}^d} (\phi_j\circ \psi_j)(a)-a\right\|&<\varepsilon.
 \end{align*}
 Set $F\coloneqq \bigoplus_{j=0}^d F_j$ and $\psi\coloneqq \oplus_{j=0}^d \psi_j$. Extend each $\phi_j$ to $F$ by setting $\phi_j\equiv 0$ off $F_j$, and set $\phi\coloneqq \sum_{j=0}^d \phi_j$. Then for $a\in \Ff$, 
 \[\|\phi(\psi(a))-a\|=\|\phi\big(\sum_{j=0}^d \psi_j(a)\big)-a\|=\big\|\sum_{j=0}^d \phi_j(\psi_j(a))-a\big\|<\varepsilon.\]
 Thus, $\dim_\nuc(A)\leq d$, as needed.
\end{proof}

\begin{defn}[{\cite[Definition 5.1]{GWY:2017:DAD}}]\label{defn-dad}
Let~$G$ be a \LCH\ and \etale\ groupoid. Then~$G$ has \emph{dynamic asymptotic dimension} $d\in\NN$ if $d$ is the smallest natural number with the property that for every open and precompact subset $V\subset G$, there are open subsets $U_0, U_{1},\dots, U_d$ of $G\z $ that cover $s(V)\cup r(V)$ such that for each $i\in\{0,\dots, d\}$ the set $\{\gamma \in V : s(\gamma), r(\gamma)\in U_i\}$ is contained in a precompact subgroupoid of~$G$. If no such $d$ exists, then we say that $G$ has infinite dynamic asymptotic dimension. We write $\dad(G)$ for the dynamic  asymptotic dimension of $G$.
\end{defn}

In Definition~\ref{defn-dad} we could have equivalently asked that for each $0\leq i\leq d$ the subgroupoid \( \langle V\cap \restrict{G}{U_i}\rangle\) generated by the set $\{\gamma\in V  : s(\gamma),r(\gamma)\in U_i\}$ is precompact. Notice that each $\langle V\cap \restrict{G}{U_i}\rangle$  is open in~$G$.
If the unit space of~$G$ is compact, then we can relax the definition of dynamic asymptotic dimension by only considering precompact sets $V$ that contain $G\z $.

\begin{remark}\label{compute-dad}
To compute the dynamic asymptotic dimension of a \LCH\ and \etale\ groupoid $G$, many helpful results have been established. By \cite[Example~5.3]{GWY:2017:DAD}, $\dad(G)=0$  if and only if $G$ is locally finite in the sense that it is  an increasing union of open,
precompact
subgroupoids. (In particular, the dynamic asymptotic dimension of a compact, Hausdorff and \etale\ groupoid is~$0$.) 

In \cite{GWY:2017:DAD}, Guentner, Willett and Yu also defined the notion of dynamic asymptotic dimension for an action of a discrete group 
$\Gamma$
on a \LCH\ space $X$, and they showed that the associated
transformation-group groupoid $X\rtimes\Gamma$ has dynamic asymptotic dimension equal to the dynamic asymptotic dimension of the group action \cite[Lemma~5.4]{GWY:2017:DAD}. In particular, if
 $\mathbb{Z}$
acts minimally on an infinite compact space $X$, then
$\dad(X\rtimes \mathbb{Z})=1$ 
by \cite[Theorem~3.1]{GWY:2017:DAD}. There are more results about dynamic asymptotic dimension of transformation-group groupoids in, for example,    \cite{Amini-et-al, Pilgrim-dad, Pilgrim-dad-geometry, Pilgrim-dad-sharp}; see also
\Cref{prop finite dad for ANY transformation groupoid} below for results about a transformation groupoid where a groupoid acts on a space.

When $X$ is a coarse space, the associated coarse
groupoid has dynamic asymptotic dimension equal to the asymptotic dimension of $X$ by \cite[Theorem~6.4]{GWY:2017:DAD} and the dynamic asymptotic dimension of a graph groupoid is either $0$ or $\infty$ by \Cref{lemma-directed-graph-groupoids} below.

Recently, some very useful reduction results have appeared in \cite[Theorem~A]{Bo:2023:DAD-pp}:  equivalent \etale\ Hausdorff groupoids  have the same  dynamic asymptotic dimension. Moreover, if $U_1, \dots, U_n$ is an open cover of the unit space of 
such a groupoid
$G$, then $\dad(G)$ is the maximum of the $\dad(G|_{U_i})$.
Finally, by \Cref{thm: unitization summary} below, for any \LCH\ \etale\ groupoid $G$ there exists a \LCH\ \etale\ groupoid $\cG$ with compact unit space such that $\dad(G)=\dad(\cG)$, and that is a key tool in the proof of our main theorem, \Cref{holy grail}. \end{remark}

\section{Unitizations}\label{unitization}
    For every $\Cst$-algebra $A$ there exists a unique unital $\Cst$-algebra $\widetilde A$, called the \emph{minimal unitization} of $A$,  that contains $A$ as an ideal with $\widetilde A/A$ isomorphic to $\CC$ via  $1+A\mapsto 1$. A favorite technique among $\Cst$-algebraists is to reduce a proof to the unital setting using the minimal unitization. In this section, we describe how such a unitization for a (twisted) groupoid $\Cst$-algebra is realized at the level of the groupoid by taking the Alexandrov one-point-compactification of the unit space. Our construction in the non-twisted case agrees with the Alexandrov groupoid of \cite{KKLRU}. Just as for the associated $\Cst$-algebras, the original groupoid will share many key properties with its associated Alexandrov groupoid, such as dynamic asymptotic dimension and topological  covering dimension. The following \namecref{thm: unitization summary} summarizes our main results for \etale\ groupoids, though some of our results are shown for r-discrete groupoids (see \Cref{counting measures good} on our hypotheses). 
    
 \begin{thm}\label{thm: unitization summary}
 Let $(E, \iota, \pi)$ be a twist over a \LCH\ and \etale\ groupoid $G$ with non-compact unit space $G\z $. Then there exists a \LCH\ and \etale\ groupoid~$\cG$ with compact unit space and a twist $(\widetilde E,\widetilde\iota, \widetilde{\pi})$ over $\cG$ such that the minimal unitization of $\Cst_\red(E;G)$ is isomorphic to $\Cst_\red(\widetilde E; \cG)$. Similarly, the minimal unitization of $\Cst(E;G)$ is isomorphic to $\Cst(\widetilde E; \cG)$. Moreover, $\dad(G)=\dad(\cG)$ and $\dim(G\z )=\dim(\cG\z )$.
 \end{thm}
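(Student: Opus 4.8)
The plan is to realise the unitisation at the level of the groupoid, by adjoining a single point to the unit space, and then to verify the $\Cst$-algebraic and dimension-theoretic claims directly.

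\textbf{Construction of $\cG$ and $\widetilde E$.} As a set, put $\cG=G\sqcup\{\infty\}$, declare $\infty$ a unit with $r(\infty)=s(\infty)=\infty$ composable only with itself, and keep the groupoid operations of $G$ on $G$. Topologise $\cG$ by declaring $U\subseteq\cG$ open exactly when $U\cap G$ is open in $G$ and, whenever $\infty\in U$, the set $G\z\setminus U$ is compact; this is the Alexandrov groupoid of \cite{KKLRU}. Then $G$ is open in $\cG$, the unit space $\cG\z=G\z\cup\{\infty\}$ is open in $\cG$ with subspace topology that of the one-point compactification of $G\z$ (hence compact, as $G\z$ is \LCH), and $\restrict{\cG}{G\z}=G$. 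The remaining checks that $\cG$ is \LCH\ and \etale\ — in particular, that $r,s$ and multiplication are continuous at $\infty$ — reduce to the observation that, $\cG\z$ being an open neighbourhood of $\infty$, any net converging to $\infty$ in $\cG$ is eventually a unit; continuity of multiplication at $(\infty,\infty)$ is then checked by testing it on the neighbourhood $W\times W$ of $(\infty,\infty)$ with $W=(G\z\setminus K)\cup\{\infty\}$. Next, put $\widetilde E=E\sqcup(\{\infty\}\times\TT)$, extend $\iota,\pi$ by $\widetilde\iota(\infty,z)=(\infty,z)$ and $\widetilde\pi(\infty,z)=\infty$, and topologise $\widetilde E$ so that $\widetilde\pi$ agrees with $\pi$ over $G$ and over the neighbourhood $W$ of $\infty$ restricts, through $\widetilde\iota$, to the trivial bundle $W\times\TT$. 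As approach to $\infty$ in $\cG$ happens only through units, $\widetilde\pi$ is a locally trivial principal $\TT$-bundle, and a routine check of \Cref{def:twist2} shows $(\widetilde E,\widetilde\iota,\widetilde\pi)$ is a twist over $\cG$; by \Cref{Haar system} it carries an induced Haar system, restricting over $G$ to the one on $E$.

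\textbf{The unitisation isomorphisms.} Note that $\restrict{\widetilde E}{G\z}=E$, that $\restrict{\cG}{\{\infty\}}$ is the trivial group with twist $\restrict{\widetilde E}{\{\infty\}}=\{\infty\}\times\TT$, and that the regular representation $\pi^{\infty}$ of \eqref{repn pi} at $\infty$ has one-dimensional range, hence defines a surjective $*$-homomorphism $\mathrm{ev}_\infty\colon\Cst_\red(\widetilde E;\cG)\to\CC$. By \Cref{lem-right-up}, extension by zero gives an isometric embedding $\Cst_\red(E;G)\hookrightarrow\Cst_\red(\widetilde E;\cG)$ whose range $I$ is an ideal (one checks $C_c(E;G)$ is a $*$-ideal of $C_c(\widetilde E;\cG)$), and $\mathrm{ev}_\infty$ annihilates $I$. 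To conclude $\ker\mathrm{ev}_\infty=I$ it suffices to show $\Cst_\red(\widetilde E;\cG)=I+\CC f_{\mathbf 1}$, where $f_{\mathbf 1}$ is the analogue for $\widetilde E$ of the element of \eqref{eq-diagonal} for the constant function $\mathbf 1\in C(\cG\z)$: for $f\in C_c(\widetilde E;\cG)$ the function $f-\mathrm{ev}_\infty(f)\,f_{\mathbf 1}$ vanishes on $\widetilde\pi\inv(\infty)$, and for an approximate unit $(h_\lambda)\subseteq C_c(G\z)$ of $C_0(G\z)$ with $0\le h_\lambda\le1$ the elements $f_{h_\lambda}*(f-\mathrm{ev}_\infty(f)f_{\mathbf 1})*f_{h_\lambda}$ lie in $C_c(E;G)$ and converge to $f-\mathrm{ev}_\infty(f)f_{\mathbf 1}$ (using uniform continuity of $f-\mathrm{ev}_\infty(f)f_{\mathbf 1}$ on its compact support), so $f-\mathrm{ev}_\infty(f)f_{\mathbf 1}\in I$. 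Since $\cG\z$ is compact, $f_{\mathbf 1}$ is the unit of $\Cst_\red(\widetilde E;\cG)$, and $f_{\mathbf 1}\notin I$ because $\mathrm{ev}_\infty(f_{\mathbf 1})=1$; hence $\Cst_\red(\widetilde E;\cG)$ is the minimal unitisation of $\Cst_\red(E;G)$. The same argument with the universal norm replacing the reduced one — \Cref{lem-right-up}, the $C_c$-computations, and the quotient construction being insensitive to the choice — gives $\Cst(\widetilde E;\cG)\cong\widetilde{\Cst(E;G)}$.

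\textbf{Dynamic asymptotic dimension.} For $\dad(G)\le\dad(\cG)$: $G$ is an open subgroupoid of $\cG$, and every precompact open $V\subseteq G$ is precompact open in $\cG$ because $\overline V^{\cG}=\overline V^{G}$ (a net leaving the compact set $\overline V^{G}$ can be separated from $\infty$); applying \Cref{defn-dad} for $\cG$ to such $V$ and intersecting the resulting cover of $s(V)\cup r(V)$ with $G\z$ yields a witnessing cover in $G$, since $\langle V\cap\restrict{\cG}{\widetilde U_i}\rangle=\langle V\cap\restrict{G}{U_i}\rangle$ for $U_i:=\widetilde U_i\cap G\z$, and the $\cG$-closure of this subgroupoid — compact, with range contained in $\overline{r(V)\cup s(V)}^{\cG}\subseteq G\z$ — lies in $G$. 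For $\dad(\cG)\le\dad(G)$: since $\cG\z$ is compact we may test $\dad(\cG)$ on precompact open $V\supseteq\cG\z$; then $V_1:=V\setminus\cG\z$ is a precompact open subset of $G$ (it lies in the compact set $\overline V^{\cG}\cap(G\setminus G\z)$), so \Cref{defn-dad} for $G$ supplies open $U_0,\dots,U_d\subseteq G\z$ covering $s(V_1)\cup r(V_1)$ with each $\langle V_1\cap\restrict{G}{U_i}\rangle$ precompact. Replacing $U_0$ by $\widetilde U_0:=U_0\cup(G\z\setminus C)\cup\{\infty\}$ for a compact $C\subseteq\bigcup_iU_i$ with $\overline{s(V_1)\cup r(V_1)}^{G}\subseteq C$, and keeping $\widetilde U_i=U_i$ for $i\ge1$, gives an open cover of $\cG\z$; since the units of $\cG$ form the \emph{compact} subgroupoid $\cG\z$ and $V$ differs from $V_1$ only by units, each $\langle V\cap\restrict{\cG}{\widetilde U_i}\rangle\subseteq\cG\z\cup\langle V_1\cap\restrict{G}{U_i}\rangle$ is precompact. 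Hence $\dad(G)=\dad(\cG)$.

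\textbf{Topological dimension, and the main difficulty.} Finally, $\cG\z$ is the one-point compactification of the \LCH\ space $G\z$, so $\dim(G\z)=\dim(\cG\z)$ by \Cref{dimension of unitization}. The bulk of the work, and the only genuinely delicate part, is the construction step: choosing the topology on $\cG$ and $\widetilde E$ so as to obtain \emph{simultaneously} a \LCH\ \etale\ groupoid — which forces $\cG\z$ to be open, and hence neighbourhoods of $\infty$ to be small — and a \emph{topological} groupoid, with multiplication continuous at $(\infty,\infty)$. Once that is in place the $\Cst$-algebraic claims reduce to the density argument of the second paragraph (that $I$ has codimension exactly one), and the two dimension equalities are essentially immediate.
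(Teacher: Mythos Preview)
Your overall plan matches the paper's: construct $\cG$ and $\widetilde E$ explicitly, identify the twisted $\Cst$-algebras with minimal unitisations, then handle $\dad$ and $\dim$ separately. Two steps, however, are incomplete as written.

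In the $\dad(\cG)\leq\dad(G)$ argument you ask for a compact $C$ with $\overline{s(V_1)\cup r(V_1)}\subseteq C\subseteq\bigcup_iU_i$, but no such $C$ need exist: the $U_i$ were chosen only to cover $s(V_1)\cup r(V_1)$, and there is no reason their union should contain the closure. The paper repairs this by first enlarging $V_1$ to a precompact open $W'\subset G\setminus G\z$ with $\overline{V_1}\subset W'$, then applying the definition of $\dad(G)$ to $W'$ so that the $U_i$ cover $s(W')\cup r(W')\supseteq s(\overline{V_1})\cup r(\overline{V_1})$, and only then invoking normality of $\cG\z$ to interpolate a closed set between the compact $s(\overline{V_1})\cup r(\overline{V_1})$ and the open $\bigcup_iU_i$. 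Without this preliminary enlargement your cover $\{\widetilde U_0,U_1,\dots,U_d\}$ need not cover $\cG\z$, and the argument breaks down.

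For the full $\Cst$-algebra isomorphism you write ``the same argument with the universal norm replacing the reduced one --- \Cref{lem-right-up} \ldots'', but \Cref{lem-right-up} is a statement about reduced norms only. The nontrivial point for the full case is that extension by zero is \emph{isometric} for the universal norm (equivalently, that every $I$-norm bounded representation of $C_c(E;G)$ extends to one of $C_c(\widetilde E;\cG)$), and your codimension-one argument presupposes this. The paper handles both cases at once via the short-exact-sequence lemma \Cref{reduced and full ses}, which for the full norm ultimately rests on \cite[Theorem~5.1]{Williams:groupoid}; your direct approximate-unit argument for the reduced case is fine, but for the full case you need either to invoke that result or to supply the extension argument explicitly.
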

 
 \begin{remark}\label{counting measures good}
 We state and prove \Cref{thm: unitization summary} for $\Cst$-algebras of \etale\ groupoids instead of r-discrete groupoids for the following reason. To consider a groupoid $\Cst$-algebra of $G$, we need to assume that $G$ carries a Haar system, and to consider $\cG$, we need to assume that  $G$ is r-discrete. These two assumptions combined imply that $G$ is \etale\ by \cite[Propositions~1.23 and~1.29]{Williams:groupoid}.
 \end{remark}

We  call the groupoid $\cG$ the \emph{Alexandrov groupoid} of~$G$ after \cite[Definition 7.7]{KKLRU}, and we call the twist $(\widetilde E, \widetilde\iota, \widetilde{\pi})$ over $\cG$ the \emph{Alexandrov twist}. We will prove \Cref{thm: unitization summary} via a series of lemmas below. In \Cref{sect: Alexandrov}, we give precise descriptions of the Alexandrov groupoid and the Alexandrov twist and establish their relevant groupoid properties (\Cref{lem:etale:tilde G:topologically'} and \Cref{tilde E' v2}). In \Cref{sect: cstar unitization}, we show that the correspondence between the unitizations of the groupoid $\Cst$-algebras and the Alexandrov groupoid $\Cst$-algebras. Finally, in \Cref{sect: dad for Alexandrov}, \Cref{DAD of Gtilde'} proves our claim about dynamic asymptotic dimension.

\subsection{The Alexandrov groupoid and twist}
\label{sect: Alexandrov}

In order to elucidate some topological minutiae that will feature in subsequent proofs, we state the following lemma for easy reference. 

\begin{lemma}\label{lem:cpctness}
Let $X$ be a topological space and let $Y$ be a subspace. Then for $C\subset Y$, we have the following. 
\begin{enumerate}[label=\textup{(\arabic*)}]
    \item\label{cpctness:item:cpct in X vs Y} $C$ is compact in $X$ if and only if it is compact in $Y$. 
    \item\label{cpctness:item:precpct in X vs Y} If $\cl[X]{C}\subset Y$, then $C$ is precompact in $X$ if and only if it is precompact in $Y$.
    \item\label{cpctness:item:precpct in Y} If $X$ is Hausdorff and $C$ is precompact in $Y$, then $\cl[X]{C}=\cl[Y]{C}\subset Y$ and $C$ is precompact in $X$. 
\end{enumerate} 
\end{lemma}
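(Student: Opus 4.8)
The plan is to prove each of the three items by carefully tracking the difference between subspace-closures in $Y$ versus $X$, using only that compactness is an intrinsic property (independent of ambient space) and, for the Hausdorff item, that compact subsets of Hausdorff spaces are closed.

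\textbf{Item \ref{cpctness:item:cpct in X vs Y}.} First I would recall that compactness of a set $C$ depends only on the collection of open subsets of $C$ in its own subspace topology, and the subspace topology that $C$ inherits from $X$ agrees with the one it inherits from $Y$ (since $Y$ has the subspace topology from $X$, a standard transitivity fact). Hence $C$ is compact as a subset of $X$ if and only if it is compact as a subset of $Y$; this is immediate and needs no further work.

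\textbf{Item \ref{cpctness:item:precpct in X vs Y}.} Here I would use the hypothesis $\cl[X]{C}\subset Y$. The closure of $C$ in $Y$ is $\cl[Y]{C}=\cl[X]{C}\cap Y$, which under the hypothesis equals $\cl[X]{C}$. Thus $C$ is precompact in $Y$ iff $\cl[Y]{C}=\cl[X]{C}$ is compact in $Y$, and by item \ref{cpctness:item:cpct in X vs Y} this holds iff $\cl[X]{C}$ is compact in $X$, i.e.\ iff $C$ is precompact in $X$.

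\textbf{Item \ref{cpctness:item:precpct in Y}.} This is the one requiring the Hausdorff hypothesis and is the main (if still modest) obstacle, because a priori $\cl[Y]{C}$ could be strictly smaller than $\cl[X]{C}$, so precompactness in $Y$ need not obviously give precompactness in $X$. The key observation is that if $C$ is precompact in $Y$, then $\cl[Y]{C}$ is compact in $Y$, hence by item \ref{cpctness:item:cpct in X vs Y} compact in $X$; since $X$ is Hausdorff, $\cl[Y]{C}$ is closed in $X$. But $\cl[Y]{C}$ is a closed subset of $X$ containing $C$, so $\cl[X]{C}\subset\cl[Y]{C}$; the reverse inclusion $\cl[Y]{C}\subset\cl[X]{C}$ always holds, giving $\cl[X]{C}=\cl[Y]{C}\subset Y$. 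Compactness of $\cl[X]{C}$ in $X$ then follows from the displayed equality and item \ref{cpctness:item:cpct in X vs Y}, so $C$ is precompact in $X$. I would then remark that items \ref{cpctness:item:precpct in X vs Y} and \ref{cpctness:item:precpct in Y} together say that when $X$ is Hausdorff and $Y$ is a subspace, precompactness of a subset of $Y$ in $X$ and in $Y$ are equivalent, with matching closures.
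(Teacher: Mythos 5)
Your proposal is correct and follows essentially the same route as the paper's proof: item (1) via the intrinsic nature of compactness and transitivity of subspace topologies, item (2) via $\cl[Y]{C}=\cl[X]{C}$ under the hypothesis, and item (3) via compactness of $\cl[Y]{C}$ in $X$ plus the Hausdorff property forcing $\cl[Y]{C}=\cl[X]{C}$. The only cosmetic difference is that the paper closes item (3) by citing item (2), whereas you argue the final compactness step directly; the content is identical.
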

\begin{proof}
A subset of a topological space is compact if and only if it is compact with respect to the subspace topology.  Since the topologies on $C$ that are induced from $X$ resp.\ from $Y$ coincide, Part~\ref{cpctness:item:cpct in X vs Y} follows.
    
For Part~\ref{cpctness:item:precpct in X vs Y}, note that if $\cl[X]{C}\subset Y$, then  $\cl[Y]{C}=\cl[X]{C}$, and it then follows from Part~\ref{cpctness:item:cpct in X vs Y} that $C$ is precompact in $X$ if and only if it is precompact in $Y$.

For Part~\ref{cpctness:item:precpct in Y}, we note that if $C$ is precompact in $Y$, then $\cl[Y]{C}$ is compact in $X$ by Part~\ref{cpctness:item:cpct in X vs Y}. Since $X$ is Hausdorff it follows that $\cl[Y]{C}$ is closed in $X$, and hence $\cl[Y]{C}=\cl[X]{C}$. Then Part~\ref{cpctness:item:precpct in X vs Y} finishes the claim.
\end{proof}

\begin{lemma}\label{lem:etale:tilde G:topologically'}
Let $G$ be a \LCH\ and r-discrete groupoid with non-compact unit space $G\z $, and let $G\z_+\coloneqq G\z \cup \{\infty\}$ denote the Alexandrov one-point compactification of $G\z $. 
Set $\cG\coloneqq G\cup \{\infty\}$, $\cG\comp \coloneqq G\comp \cup\{(\infty, \infty)\}$, and $\cG\z \coloneqq G\z_+$. We define composition of tuples in $G\comp  \subset 
    \cG\comp $ as in~$G$, and extend $r$ and $s$ to $\cG$ by setting $r\inv (\infty)=s\inv (\infty)=\{\infty\}$. The set $\mathfrak{U}$ consisting of $\cG$, all open sets in $G\z_+$ and all open sets in $G$  is a basis for a topology on $\cG$. Equipped with this basis, $\cG$ is a \LCH\ and r-discrete groupoid with compact unit space $\cG\z$, and it contains $G$ as a subgroupoid. If in addition $G$ is \etale, then so is $\cG$.
\end{lemma}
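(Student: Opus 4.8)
The strategy is to verify, one property at a time, that $\cG$ with the proposed basis $\mathfrak{U}$ is a \LCH\ r-discrete groupoid containing $G$ as a subgroupoid, adding \'etaleness at the end. First I would check that $\mathfrak{U}$ is genuinely a basis, i.e.\ that the intersection of two basic sets is a union of basic sets; the only slightly delicate case is the intersection of an open set $U$ in $G$ with an open set $V$ in $G\z_+$, which (since $G\z$ is open in $G$ by r-discreteness) is just $U\cap (V\setminus\{\infty\})\cap G\z$, an open subset of $G\z$, hence of $G$. Then I would observe that $\cG$ is open in itself (it's a basic set), that $\cG\z=G\z_+$ carries exactly its Alexandrov topology as a subspace of $\cG$ (basic sets meeting $G\z_+$ are precisely the open sets of $G\z_+$ plus possibly pieces of $G$ that miss $\infty$ and, intersected with $G\z$, are open there), and that $G$ sits inside $\cG$ as an open topological subspace with its original topology.

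Next come the three topological adjectives. \emph{Compactness of $\cG\z$}: this is immediate since $G\z_+$ is the one-point compactification of the \LCH\ space $G\z$, and I'd use \Cref{lem:cpctness}\ref{cpctness:item:cpct in X vs Y} to transfer compactness between the subspace and $\cG$. \emph{Local compactness of $\cG$}: points of $G$ have precompact neighborhoods inside $G$ (hence inside $\cG$) because $G$ is \LCH, and for the point $\infty$ I would note that $\cG$ itself is a neighborhood of $\infty$ and is compact---because $\cG = \cG\z \cup (G\setminus G\z)$ is the union of the compact set $\cG\z$ with... actually the cleanest route is to show $\cG$ is compact directly: given an open cover, one member contains $\infty$ and hence (being basic or a union of basic sets) contains a set of the form $(G\z_+\setminus K)$ with $K\subset G\z$ compact, leaving only the compact-in-$G$ remainder $r_G\inv(K)\cap s_G\inv(K)$... hmm, that need not be compact. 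Let me instead argue local compactness of $\cG$ at $\infty$ via a \emph{compact} neighborhood: take any open set of the form $V=(G\z\setminus K)\cup\{\infty\}$ with $K$ closed compact in $G\z$; its closure in $\cG$ is contained in $\cG\z$ (since $V\subset \cG\z$ and $\cG\z$ is closed, being compact in the---to-be-verified---Hausdorff space $\cG$), and $\cl[\cG]{V}$ is a closed subset of the compact set $\cG\z$, hence compact. So $\infty$ has a compact neighborhood. \emph{Hausdorffness of $\cG$}: two points of $G$ separate inside $G$; a point $\gamma\in G$ and $\infty$ separate because $\gamma$ has a precompact open neighborhood $O$ in $G$ (so $\cl[G]{O}$ is compact in $G$, hence $r_G(\cl[G]O)\cup s_G(\cl[G]O)=:K$ is compact in $G\z$) and $(G\z_+\setminus K)\cup(\text{nothing})$... wait, I need the neighborhood of $\infty$ to miss $O$. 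Better: the precompact open $O\ni\gamma$ has $\cl[G]O$ compact in $G$, and $(\cG\setminus \cl[G]O)$ contains $\infty$ and is open in $\cG$ (it contains the basic open set $(G\z_+\setminus (r_G(\cl[G]O)\cup s_G(\cl[G]O)))$ around $\infty$ together with the open-in-$G$ set $G\setminus\cl[G]O$). This gives disjoint neighborhoods.

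Finally, the groupoid structure. The multiplication on $\cG\comp$ extends that on $G\comp$ with $(\infty,\infty)\mapsto\infty$, and $r,s$ extend by fixing $\infty$; the groupoid axioms are inherited from $G$ together with the trivial group $\{\infty\}$ at the new unit. Continuity of $r$ and $s$: on the open subspace $G$ they are the original continuous maps, and continuity at $\infty$ follows because the preimage under $r$ (or $s$) of a basic neighborhood $(G\z_+\setminus K)\cup\{\infty\}$ of $\infty$ is $(G\z_+\setminus K)\cup\{\infty\}$ together with... no: $r\inv((G\z_+\setminus K)\cup\{\infty\}) = \{\infty\}\cup (G\setminus r_G\inv(K))$, which is open in $\cG$ since $r_G\inv(K)$ is closed in $G$ and the set contains the basic open neighborhood of $\infty$ obtained by enlarging $K$ to $K\cup(\text{its closure})$---here I use that $r_G\inv(K)$ is closed and that a closed-in-$G$ set whose complement contains $\infty$ meets $G\z$ in a set whose complement-in-$G\z$ is open, but I also need that set to be \emph{compact} to get a basic open neighborhood; this is where I would be slightly careful, possibly shrinking to precompact $K$ and using that $r_G\inv(K)\cap G\z=K$ is compact while the full preimage is handled by the $G$-open part. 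Continuity of inversion and multiplication is checked similarly, the only new point being near $(\infty,\infty)$, where a basic neighborhood pulls back along the (jointly continuous on $G\comp$) multiplication to an open set. \emph{r-discreteness}: $\cG\z=G\z_+$ is a basic open set in $\cG$, so it is open, giving r-discreteness. \emph{\'Etaleness when $G$ is}: we must see $r$ (equivalently $s$) is a local homeomorphism; on $G$ it already is, and at $\infty$ the set $V=(G\z_+\setminus K)\cup\{\infty\}$ is open, $r|_V$ is a bijection onto the open set $V\subset\cG\z$ (indeed $r$ fixes $\infty$ and restricted to $V\cap G\z=G\z\setminus K$ is the identity), and it is a homeomorphism since the subspace topologies agree; hence $r$ is a local homeomorphism on all of $\cG$.

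\textbf{Main obstacle.} The genuinely fiddly part is checking local compactness and Hausdorffness at the point $\infty$, and relatedly the continuity of $r$ and $s$ there: one must be disciplined about the fact that basic neighborhoods of $\infty$ are indexed by \emph{compact} subsets of $G\z$, not by arbitrary closed sets or by compact subsets of $G$, so every time a closed-in-$G$ set $C$ arises (such as $\cl[G]O$ or $r_G\inv(K)$) one has to pass to $r_G(C)\cup s_G(C)$ (compact in $G\z$ when $C$ is compact) to produce an honest basic neighborhood of $\infty$, and invoke \Cref{lem:cpctness} to reconcile compactness in the subspace $G\z$, in $G\z_+$, and in $\cG$. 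Once this bookkeeping is set up cleanly, every verification is routine.
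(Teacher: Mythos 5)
Your proposal is correct and follows essentially the same route as the paper: a direct verification that $\mathfrak{U}$ is a basis (using r-discreteness of $G$ exactly as the paper does), followed by checking that $G$ and $\cG\z$ sit inside $\cG$ with their own topologies, Hausdorffness, local compactness, continuity of the operations, openness of $\cG\z$, and \etale ness. The differences are only cosmetic: the paper separates $\beta\in G$ from $\infty$ via the compact closure of $V\cap G\z$ in $G\z$ rather than via $r_G(\cl{V})\cup s_G(\cl{V})$, takes $\cG\z$ itself as a compact open neighbourhood of $\infty$ instead of closing up a basic neighbourhood, leaves continuity of the operations to a routine net argument, and obtains \etale ness by adjoining open neighbourhoods of $\infty$ in $\cG\z$ to a basis of open bisections, which is equivalent to your local-homeomorphism check of the range map at $\infty$.
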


\begin{proof}
   To see that $\mathfrak{U}$ is a basis for a topology, the only non-trivial claim is that if $\alpha\in U_{1}\cap U_{2}$ for 
   $U_{1}$ an open subset of $G$ and $U_{2}$ an open subset of $\cG\z=G\z \cup \{\infty\}$, then there exists $U$ in $\mathfrak{U}$ with $\alpha \in U \subset  
    U_{1}\cap U_{2}$. Since compact sets in the Hausdorff space $G\z$ are closed, we always have that $U_{2}\setminus \{\infty\} = G\z \setminus A$ for some closed subset $A$ of $G\z$, independent of whether $\infty$ is or is not an element of $U_{2}$. Since $G$ is r-discrete, the unit space $G\z $ is open in~$G$, and so $U_{1}\cap U_{2}= U_{1}\cap (G\z\setminus A)$ is automatically open in $G$ and thus the required element $U\in \mathfrak{U}$.
   
    By definition of $\mathfrak{U}$, both~$G$ and $\cG\z =G\z \cup\{\infty\}$ are open in $\cG $. In particular, $\cG $ is r-discrete. Using nets in $\cG$ that either converge to a point in $G$ or to $\infty$, it is easy to verify that the operations and the range and source maps on $\cG $ are continuous.

  Since~$G$ is Hausdorff, to check  that $\cG$ is Hausdorff, it suffices to separate $\infty$ from an arbitrary $\beta\in G$. Since~$G$ is locally compact, we can find an open set $V$ around $\beta$ in~$G$ which is precompact in~$G$. Since $G\z $ is closed in~$G$, the closure $K$ of $V\cap G\z $ in $G\z $ is a compact subset of $G\z $. In particular, $(G\z \setminus K)\cup \{\infty\}$ is an open neighborhood of $\infty$ in $\cG $ which is disjoint from the open neighborhood $V$ of $\beta$.

Next we check that  $\cG $ is locally compact. Since $\cG\z $ is a compact open neighborhood of $\infty$ in $\cG$, we only need to check that any $\alpha\in \cG\setminus\{\infty\}$ has a precompact open neighborhood in $\cG$. Let $U$ be a neighborhood of $\alpha$ that is precompact and open in $G$. Then $U\in \mathfrak{U}$, and since $G\subset \cG$ is equipped with the subspace topology, \Cref{lem:cpctness} says that $U$ is precompact in $\cG$. Thus, $\cG$ is \LCH\ and r-discrete.

Finally, suppose that $G$ is \etale. Since $G$ is r-discrete, this is equivalent to $G$ having a basis of open bisections. Since $\mathfrak{U}$ contains all open sets of~$G$, all open bisections of $G$ are open bisections of $\cG $; since $\mathfrak{U}$ contains all open sets of $G_+\z $  and since the restriction of the range map to these sets is the identity, any open subset of $G_+\z $ containing $\infty$ is an open bisection in $\cG $. Thus, by adding all open subsets of $G_+\z $ containing $\infty$ to a basis of open bisections of $G$ we obtain a basis of open bisections of $\cG $. This concludes our proof.
\end{proof}

If $G$ is \etale,  then $\cG$ agrees with the Alexandrov groupoid $G^+$ from \cite[Definition 7.7]{KKLRU}. 
We reserve the notation $\cX$ for the Alexandrov compactification of a topological space $X$ and use 
$\cG$ for the Alexandrov groupoid of a groupoid~$G$.

\begin{example}\label{ex:transformation gpd unitised}
Let $\Gamma$ be a discrete group with identity $e$ which acts continuously on a locally compact, non-compact, Hausdorff space $X$ and let $G=\Gamma\ltimes X$ be the transformation-group groupoid. Let $\cG $ be the Alexandrov groupoid as above, so that $\cG =G\cup\{\infty\}$. Denote by $\cX$ the one-point compactification of $X$. It is easy to check that the action of $\Gamma$ 
on $\cX$ given by 
\[\gamma\cdot x=\begin{cases}\gamma\cdot x &\text{if $x\in X$}\\
\infty &\text{if $x=\infty$}\end{cases}\]
is continuous. As sets, we have 
  \[  \cG =\{(\gamma,x)\colon\gamma\in \Gamma, x\in X\}\cup\{(e,\infty)\}\text{\ and\ }
    \Gamma\ltimes \cX=\{(\gamma,x) : \gamma\in\Gamma, x\in\cX\}.
\]
Let $\iota\colon \cG\to \Gamma\ltimes \cX$ be the inclusion; it is trivial to check this is a groupoid homomorphism. Now notice that
\[
(\Gamma\ltimes\cX)\setminus \iota(\cG )=\{(\gamma, \infty) : \gamma\neq e\}
\]
is closed: for if $(\gamma_n, \infty)\to(\alpha, y)$ in $\Gamma\ltimes \cX$ with $\gamma_n\neq e$, then $\gamma_n\to \alpha$ and $\infty\to y$; thus $y=\infty$ and $\alpha\neq e$
since $\Gamma$ is discrete, and hence $\cG $ is a wide and  open subgroupoid of $\Gamma\ltimes\cX$. It follows that the ``inclusion''  $i\colon C_c(\cG )\to C_c(\Gamma\ltimes\cX)$ given  by extending functions to be $0$ off of $\cG$ is a well-defined homomorphism which is isometric for the reduced norms. Thus, $i$ extends to an injective homomorphism $i\colon \Cst_\red(\cG )\to \Cst_\red(\Gamma\ltimes\cX)$.
\end{example}

Next, we discuss the Alexandrov twist.

\begin{lemma}
\label{tilde E' v2}
 Let $(E, \iota, \pi)$ be a twist over a \LCH, r-dis\-crete  grou\-poid~$G$ with non-compact unit space $G\z $. Set $\widetilde E\coloneqq E\cup\{\infty_z : z\in\TT\}$ and $\widetilde E\comp \coloneqq E\comp \cup\{(\infty_w, \infty_z) : z,w\in\TT\}$ with composition of pairs in $E\comp \subset \widetilde E\comp $ defined as in $E$ and $\infty_w\infty_z=\infty_{wz}$ for all $w,z\in\TT$.
 Define $\widetilde r, \widetilde{s}\colon \widetilde E\to G\z \cup\{\infty\}$ by $\widetilde r|_E=r$, $\widetilde{s}|_E=s$ and $\widetilde r(\infty_w)=\infty=\widetilde{s}(\infty_w)$ for all $w\in\TT$, and define $\widetilde\iota\colon \cG\z \times\TT\to \widetilde E$ by $\widetilde\iota|_{G\z \times\TT}=\iota$ and $\widetilde\iota(\infty, z)=\infty_z$. Then we have the following. 
 \begin{enumerate}[label=\textup{(\arabic*)}]
     \item\label{tilde E 2}
     The collection $\mathfrak{V}$ consisting of $\widetilde E$, the open sets of $E$ and the sets $\widetilde\iota(V)$ where $V$ is an open subset of $\cG\z \times\TT$, is a basis for a topology on $\widetilde E$ with respect to which $\widetilde\iota$ is open and continuous.
     \item\label{tilde E 3} The topology on $E$ coincides with the subspace topology and $\widetilde E$ is a locally compact and Hausdorff groupoid.
     \item\label{tilde E 4} Define $\widetilde\pi\colon \widetilde E\to\cG$ by $\widetilde\pi|_E=\pi$ and $\widetilde\pi(\infty_z)=\infty$.  Then $(\widetilde E, \widetilde\iota, \widetilde\pi)$ is a twist over $\cG$.
 \end{enumerate}
 \end{lemma}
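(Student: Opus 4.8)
\textbf{Plan for proving Lemma~\ref{tilde E' v2}.}

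The proof is largely a routine verification that mirrors the structure of the proof of \Cref{lem:etale:tilde G:topologically'}, but tracking the extra $\TT$-coordinate that lives over the point $\infty$. I would organize it as a sequence of checks corresponding to the three parts.

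For Part~\ref{tilde E 2}, I would first verify that $\mathfrak{V}$ is closed under the ``finite intersection'' condition for a basis. The only nontrivial overlap to analyze is between an open set $W\subseteq E$ and a set $\widetilde\iota(V)$ with $V$ open in $\cG\z\times\TT$; since $\widetilde\iota(V\cap(G\z\times\TT))=\iota(V\cap(G\z\times\TT))$ is open in $\pi\inv(G\z)$, and $\pi\inv(G\z)$ is open in $E$ (because $G$ is r-discrete, so $G\z=E\z$ is open in $E$, as noted in the excerpt just before \Cref{lem:pi proper}), the intersection $W\cap\widetilde\iota(V)$ is open in $E$ and hence in $\mathfrak{V}$. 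That $\widetilde\iota$ is continuous and open is then immediate from the definition of $\mathfrak{V}$: preimages of basic open sets are open in $\cG\z\times\TT$ (using that $\iota$ is a homeomorphism onto $\pi\inv(G\z)$), and images of basic open sets of $\cG\z\times\TT$ are by definition in $\mathfrak{V}$, while images of basic open sets of $E$ are handled via $\iota$ being open.

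For Part~\ref{tilde E 3}, the subspace-topology claim follows because every open set of $E$ is declared open in $\widetilde E$, and conversely $\widetilde\iota(V)\cap E=\iota(V\cap(G\z\times\TT))$ is open in $E$. For the Hausdorff property I would invoke \Cref{lem:pi proper}: $\widetilde\pi$ is the one-point extension of the proper map $\pi$ over the Alexandrov groupoid $\cG$ (which is Hausdorff by \Cref{lem:etale:tilde G:topologically'}), so separating two points of $\widetilde E$ reduces to separating their images in $\cG$ when these differ, and to separating points within a single fiber $\pi\inv(\gamma)\cong\TT$ (Hausdorff) or within $\{\infty_z:z\in\TT\}\cong\TT$ when they agree --- concretely, to separate $\infty_w$ from $e\in E$, use that $\pi(e)$ and $\infty$ are separated in $\cG$, pull back via $\widetilde\pi$ which is continuous, and (as in the proof of \Cref{lem:etale:tilde G:topologically'}) the neighborhood of $\infty$ in $\cG$ of the form $(G\z\setminus K)\cup\{\infty\}$ pulls back to a neighborhood of the full circle $\{\infty_z:z\in\TT\}$. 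For local compactness, the set $\widetilde\iota(\cG\z\times\TT)=\pi\inv(G\z)\cup\{\infty_z:z\in\TT\}$ is a compact open neighborhood of every $\infty_z$ (it is homeomorphic to $\cG\z\times\TT=G\z_+\times\TT$, a product of compact spaces), so it remains to find precompact open neighborhoods of points of $E$; these come from precompact open neighborhoods in $E$ itself via \Cref{lem:cpctness} and the subspace-topology identification. That $\widetilde E$ is a groupoid (continuity of multiplication and inversion, with $\widetilde E\comp$ as stated) is a net-convergence check exactly analogous to the one in \Cref{lem:etale:tilde G:topologically'}, splitting into the cases where nets stay in $E$ or limit onto some $\infty_z$, using continuity of multiplication on $\TT$ for the latter.

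For Part~\ref{tilde E 4}, I would check the three axioms of \Cref{def:twist2}. Axiom (1): $\widetilde\iota\colon\cG\z\times\TT\to\widetilde\pi\inv(\cG\z)$ is a homeomorphism, since it is a bijection onto $\pi\inv(G\z)\cup\{\infty_z:z\in\TT\}=\widetilde\pi\inv(G\z\cup\{\infty\})$, it is continuous and open by Part~\ref{tilde E 2}, and its inverse is continuous by the same reasoning (again using that $\iota$ is a homeomorphism over $G\z$). Axiom (2): $\widetilde\pi$ is continuous (net argument, or observe preimages of basic open sets of $\cG$ are basic open in $\widetilde E$, with $\widetilde\pi\inv$ of a neighborhood of $\infty$ in $\cG$ being $\widetilde\iota$ applied to the corresponding neighborhood times $\TT$), surjective by construction, and open --- for an open $W\subseteq E$, $\widetilde\pi(W)=\pi(W)$ is open in $G\subseteq\cG$, and $\widetilde\pi(\widetilde\iota(V))=\mathrm{pr}_{\cG\z}(V)$ which, being the image of an open set under the open projection $\cG\z\times\TT\to\cG\z$, is open in $\cG\z\subseteq\cG$. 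Axiom (3), centrality: the identity $\widetilde\iota(\widetilde r(e),z)\,e=e\,\widetilde\iota(\widetilde s(e),z)$ holds for $e\in E$ because it holds in the twist $E$, and for $e=\infty_w$ it reduces to $\infty_z\infty_w=\infty_{zw}=\infty_{wz}=\infty_w\infty_z$, using commutativity of $\TT$. This completes the verification.

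\textbf{Main obstacle.} None of the steps is genuinely deep; the one requiring care is the Hausdorff property together with the simultaneous verification that the topology on $E$ is recovered as a subspace --- in particular making sure the basic sets $\widetilde\iota(V)$ (which are ``fattened'' over $\infty$ to carry a full copy of $\TT$) interact correctly with the open sets of $E$ near $\pi\inv(G\z)$, and that neighborhoods of $\infty_w$ necessarily contain a neighborhood of the entire circle $\{\infty_z:z\in\TT\}$ intersected with $E$ of the form $\pi\inv(G\z\setminus K)$. Getting these topological bookkeeping details right is the crux; everything else is a transcription of the non-twisted argument with the $\TT$-factor carried along.
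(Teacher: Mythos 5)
Your plan is correct and follows essentially the same route as the paper's proof: the basis check hinges on the same observation that $\iota$ is open into $E$ because $G$ is r-discrete, the subspace-topology, local compactness, and groupoid-continuity verifications are the same net arguments, and Part~\ref{tilde E 4} is the same axiom-by-axiom check. The only cosmetic difference is your Hausdorff step, which separates points via continuity of $\widetilde\pi$ and Hausdorffness of $\cG$ (the appeal to \Cref{lem:pi proper} is not actually needed), whereas the paper separates $e\in E$ from $\infty_z$ directly using $\widetilde s\inv(V)$ and $\widetilde\iota\bigl([(G\z\setminus K)\cup\{\infty\}]\times\TT\bigr)$; both arguments amount to the same thing.
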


 \begin{proof}
For \ref{tilde E 2}, let $U_{1}, U_{2}\in \mathfrak{V}$; we will show that \[U\coloneqq U_{1}\cap U_{2}\in \mathfrak{V}.\] 
If either $U_i$ is $\widetilde E$, or if both  $U_i$ are subsets of $E$, then the claim is trivial. 
If both  $U_i$ are of the form $\widetilde\iota(V_i)$, then since $\widetilde\iota$ is injective we  again have $U=\widetilde\iota(V_{1}\cap V_{2})\in \mathfrak{V}$.

So suppose that $U_{1}$ is open in $E$ and that $U_{2}=\widetilde\iota (V)$ where $V$ is open in $\cG\z      \times\TT$. Without loss of generality, $V=W\times S$ where $W$ is open in $\cG\z$ and $S$ is open in $\TT$.  Then since $U_{1}\subset E$, we have $U=U_{1}\cap \iota(W\setminus\{\infty\}\times S)$.  As mentioned earlier, since $G$ is r-discrete, $\pi\inv (G\z)$ is open in $E$, and since $\iota$ is a homeomorphism onto $\pi\inv (G\z)$, it follows that $\iota$ is an open map into $E$. Since $W\setminus\{\infty\}\times S$ is open in $G\z \times \TT$ we get that $\iota(W\setminus\{\infty\}\times S)$ is open in $E$. So $U$ is open in $E$ and thus in $\widetilde E$ as well. Hence, $\mathfrak{V}$ is a basis for a topology $\widetilde  E$. Note that $\widetilde{\iota}$ is an open, injective map by construction.

\smallskip

To see that $\widetilde \iota$ is continuous, let $U\subset \widetilde E$ be an open set. If $U$ is open in $E$, then $\widetilde \iota\inv (U)= \iota\inv (U)$ is open in $G\z  \times \TT$ and hence in $ \cG\z \times \TT$. If $U$ is of the form $\widetilde \iota (V)$ for some open set $V$ in $ \cG\z \times\TT$, then $\widetilde\iota\inv (U) = V$ is open. Thus, $\widetilde\iota$ is continuous.

\medskip

For~\ref{tilde E 3}, to see that the topology $\tau$ on  $E$ is exactly the subspace topology $\mathfrak{V}\cap E$, note first that $\tau\subset \mathfrak{V}$. For the other inclusion, it suffices to check that, whenever $W$ and $S$ are open subsets of $\cG\z $ and $\TT$, respectively, then $\widetilde{\iota}(W\times S)\cap E$ is open in $E$.  Note that $\widetilde{\iota}(W\times S)\cap E = \iota([W\setminus \{\infty\}]\times S)$. Since $W\setminus\{\infty\}$ is open in $G\z$ and $\iota$ is open as  a map into $E$, it follows that $\widetilde{\iota}(W\times S)\cap E $ is  open in $E$.

\smallskip
It is easy to verify that $\widetilde E$ is an algebraic groupoid with unit space  $\cG\z =G\z \cup\{\infty\} $; it remains to show that $\tilde{s}, \tilde{r},$ inversion and multiplication are continuous. To that end, let $\net{e_{\lambda}}{\lambda}$ be a net in $\widetilde{E}$ that converges to $e\in \widetilde{E}$. Assume first that $e\in E$. Then since $E$ is open in $\widetilde{E}$, we eventually have $e_\lambda\in E$ and hence \[
\widetilde{s}(e_\lambda)=s(e_\lambda)\to s(e)=\widetilde{s}(e),\quad \widetilde{r}(e_\lambda)=r(e_\lambda)\to r(e)=\widetilde{r}(e) \quad\text{ and }\quad e_{\lambda}\inv \to e\inv
\]
by continuity of the source, range, and inversion map of $E$. Second, assume that $e=\infty_{z}$ for some $z\in\TT$. Then $e=\widetilde{\iota}(\infty,z)$ is in the image of $\widetilde{\iota}$. If we take a neighborhood around $e$, say $\widetilde{\iota}(V)$ for $V=[(G\z \setminus K)\cup\{\infty\}]\times S$ where $K$ is a compact subset of $G\z $ and $S$ is an open neighborhood around $z$ in $\TT$, then eventually $e_\lambda\in \widetilde{\iota}(V)$. Since $\widetilde{\iota}$ is, when restricted to~$V$, still an open map that is surjective onto its range, Fell's criterion (\cite[Proposition 1.1]{Williams:groupoid}) implies that there exists a subnet $\net{e_{f(\mu)}}{\mu}$ of $\net{e_{\lambda}}{\lambda}$ such that for each $\mu$, $e_{f(\mu)}= \widetilde{\iota} (x_\mu,z_\mu)$ for some $x_\mu\in  \cG\z $ and some $z_\mu\in \TT$ where  $x_\mu\to \infty$ and $z_\mu\to z$. Since 
\[
\widetilde{r}(e_{f(\mu)})=\widetilde{s}(e_{f(\mu)}) = x_{\mu}\to \infty = \widetilde{s}(e) =  \widetilde{r}(e) \text{ in } \cG\z  
\]
and
\[e_{f(\mu)}\inv = \widetilde{\iota} (x_\mu,\overline{z_\mu})\to \widetilde{\iota} (\infty,\overline{z}) = e\inv
\]
by continuity of $\widetilde{\iota}$, we have shown that $\net{\widetilde{s}(e_{\lambda})}{\lambda}=\net{\widetilde{r}(e_{\lambda})}{\lambda}$ has a subnet that converges to $\widetilde{s}(e)=\widetilde{r}(e)$ and that $\net{e_{\lambda}\inv}{\lambda}$ has a subnet that converges to $e\inv$. This shows that $\widetilde{s}, \widetilde{r}$, and the inversion map are continuous by, for example, \cite[Theorem~18.1]{Munkres}.

\smallskip

Suppose next that we have $(d_\lambda,e_\lambda)\to (d,e)$, a convergent net in $\widetilde{E}\comp $. Since $(d,e)$ is composable, we either have $(d,e)\in E\comp $ or $(d,e)=(\infty_{w},\infty_{z}).$ In the first case, note that since $E$ is open in $\widetilde{E}$, $E\comp $ is open in $\widetilde{E}\comp $. Thus, we eventually have $(d_\lambda,e_\lambda)\in E\comp $ and continuity of the multiplication on $E$ implies that $d_\lambda e_\lambda\to de$ in $E$ and hence in $\widetilde{E}$. In the case where $(d,e)=(\infty_{w},\infty_{z})$, consider a basic open neighborhood of $(d,e)$, say $[\widetilde{\iota}(V_{1})\times\widetilde{\iota}(V_{2})]\cap \widetilde{E}\comp $. Eventually, $(d_\lambda,e_\lambda)$ must be in that set, so we must have $d_\lambda=\widetilde{\iota}(x_\lambda,w_\lambda)$ and $e_\lambda=\widetilde{\iota}(x_\lambda,z_\lambda)$ with $x_\lambda\to \infty$ in $G\z \cup\{\infty\}$, and with $w_\lambda\to w$ and $z_\lambda\to z$ in $\TT$. By continuity of $\widetilde{\iota}$, we conclude
\[
    d_\lambda e_\lambda= \widetilde{\iota}(x_\lambda,w_\lambda) \widetilde{\iota}(x_\lambda,z_\lambda)
    =
    \widetilde{\iota}(x_\lambda,w_\lambda z_\lambda)\to\widetilde{\iota} (\infty, wz)
    =
    \infty_{wz}=\infty_w\infty_z = de.
\]

\smallskip
Because $E$ and $\TT$ are Hausdorff and the map $\iota$ is injective, to check that $\widetilde{E}$ is Hausdorff, we need only to check that we can separate a point $e\in E$ from a point $\infty_z\in \widetilde{E}\setminus E$. Since $s(e)\neq \infty$, there exists an open neighborhood $V$ of $s(e)$ in $G\z $ with compact closure $K$. Then $X\coloneqq(G\z \setminus K)\cup\{\infty\}$ is an open neighborhood of $\infty$ in $\cG\z$ with $V\cap X=\emptyset$. Then $s\inv(V)=\widetilde{s}\inv(V)$ is an open neighborhood of $e$ in $\widetilde E$ and $\widetilde\iota(X\times\TT)$ is an open neighborhood of $\infty_z$. Since $\widetilde{s}(\widetilde{s}\inv (V)\cap \widetilde\iota(X\times\TT))  \subset V\cap X=\emptyset$   we must have $\widetilde{s}\inv (V)\cap \widetilde\iota(X\times\TT)=\emptyset$ as well.

\smallskip 

Since $E\subset \widetilde{E}$ is a subspace, \Cref{lem:cpctness}~\ref{cpctness:item:cpct in X vs Y} tells us that a neighborhood of a point $e\in E$ that is open and precompact in $E$ is open and precompact in $\widetilde{E}$ as well. For $z\in \TT$, $\tilde{\iota}(\widetilde{G}\z \times \TT)$ is an open compact neighborhood of $\infty_z\in \widetilde{E}$. Thus, $\widetilde{E}$ is locally compact and Hausdorff.

\medskip

For \ref{tilde E 4}, we note first that, clearly, $\widetilde\iota$ is a homomorphism. By construction,  $\widetilde\iota$ is an injective map which is open into $\widetilde{E}$, and we have shown above that $\widetilde\iota$ is continuous. It follows that $\widetilde\iota$ is a homeomorphism onto its range  
\begin{align*}
\widetilde{\iota}(\cG\z \times\TT) =\iota(G\z \times\TT)\cup\{\infty_z : z\in\TT\}=
\pi\inv(G\z )\cup\{\infty_z : z\in\TT\}=\widetilde{\pi}\inv (\cG\z).
\end{align*}

Next, $\widetilde\pi$ is clearly a surjective homomorphism. To see that $\widetilde\pi$ is continuous, observe that for any open set $U\subset G$ the set $\widetilde\pi\inv (U)=\pi\inv(U)$ is open in $E$ and hence in $\widetilde{E}$. For a basic open neighborhood $(G\z \setminus K)\cup\{\infty\}$ around $\infty$ with $K$ compact in $G\z $, we compute
\begin{align*}
    \widetilde\pi\inv ((G\z \setminus K)\cup\{\infty\})
    &=
    \pi\inv (G\z \setminus K)\cup \{\infty_z : z\in\TT\}
    \\
    &=
    \iota ([G\z \setminus K]\times\TT) \cup \{\infty_z : z\in\TT\}
    \\
    &=
    \widetilde\iota \bigl([(G\z \setminus K)\cup\{\infty\}]\times\TT\bigr);
\end{align*}
since $[(G\z \setminus K)\cup\{\infty\}]\times\TT$ is open in $\cG\z  \times\TT$, this is an open set in $\widetilde{E}$, and $\widetilde\pi$ is continuous.

\smallskip
To see that $\widetilde\pi$ is open, let $U$ be a basic open set in $\widetilde E$. If $U=\widetilde E$, then $\widetilde\pi(U)=\widetilde{G}$ is open. If $U$ is open in $E$, then $\widetilde\pi(U)=\pi(U)$ is open in $E$ and hence in $\widetilde{E}$. Otherwise, $U=\widetilde\iota(V)$ where $V$ is open in $\widetilde{G}\z \times\TT$. We may assume that $V=W\times S$ where $W$ is open in $\widetilde{G}\z $ and $S$ is open in $\TT$. Then 
\[
\widetilde\pi (U)=\widetilde\pi (\widetilde\iota(W\times S))=W
\]
because $\pi\circ \iota(x,z)=x$ and $\widetilde\pi\circ\widetilde\iota (\infty, z)=\infty$.  Thus, $\widetilde\pi$ is open.

To see that $\widetilde{\iota}(\cG\z \times\TT)$ is central, first take $e\in E$. We have $\widetilde{\iota}(\widetilde{r}(e), z)=\iota(r(e), z)$ and so the corresponding property of $\iota$ implies $\widetilde{\iota}(\widetilde{r}(e), z)e=e\widetilde{\iota}(\widetilde{s}(e), z)$, as needed. If $e=\infty_{w}$, then the claim follows from commutativity of $\TT$: 
      \[  \widetilde{\iota}(\widetilde{r}(e), z)e
        =
        \widetilde{\iota}(\infty, z)e
        =
        \infty_z \infty_{w}
        =
        \infty_{w}\infty_z 
        =
        e\widetilde{\iota}(\widetilde{s}(e), z).\]
Thus, $(\widetilde E, \widetilde\iota, \widetilde\pi)$ is a twist over $\cG$.
\end{proof}

\subsection{The twisted \texorpdfstring{$\Cst$}{C*}-algebra of the Alexandrov groupoid}\label{sect: cstar unitization}

Next we will show that the minimal unitization of a non-unital twisted groupoid $\Cst$-algebra coincides with the twisted groupoid $\Cst$-algebra of the Alexandrov twist. We need the following lemma. 

\begin{lemma}
\label{reduced and full ses}
Let $(E, \iota, \pi)$ be a twist over a \LCH\ and \etale\  groupoid~$G$. Let  $U$ be an open and invariant subset of $G\z $ and set $W=G\z \setminus U$. Then the inclusion map on $C_c(\restrict{E}{U})$ and the restriction map on $C_c(E; G)$  induce the short exact sequence
\begin{equation}\label{ses-full}
0\longrightarrow\Cst( \restrict{E}{U};  \restrict{G}{U})\stackrel{i}{\longrightarrow}  \Cst(E;G)\stackrel{j}{\longrightarrow}  \Cst(\restrict{E}{W};\restrict{G}{W})\longrightarrow 0.
\end{equation}
If the reduced norm on $C_c(\restrict{E}{W})$ equals the universal norm, then the inclusion and restriction  maps induce  the short exact sequence
\begin{equation}\label{ses-red}
0\longrightarrow\Cst_\red( \restrict{E}{U};  \restrict{G}{U})\stackrel{i_{\red}}{\longrightarrow}  \Cst_\red(E;G)\stackrel{j_{\red}}{\longrightarrow}  \Cst_\red(\restrict{E}{W};\restrict{G}{W})\longrightarrow 0.
\end{equation}
\end{lemma}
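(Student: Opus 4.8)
The plan is to set up the two short exact sequences via the standard groupoid-$\Cst$-algebra machinery for restricting to open invariant subsets of the unit space, keeping careful track of the twist. First I would observe that since $U$ is open and invariant in $G\z$, the subgroupoid $\restrict GU = r\inv(U)$ is an open (and, being invariant, also ``full-on-$U$'') subgroupoid of $G$, and $W = G\z\setminus U$ is closed and invariant, so $\restrict GW$ is a closed subgroupoid; by \Cref{restriction of twist non-etale} both $\restrict EU$ and $\restrict EW$ carry twists $(\restrict EU,\iota|_{U\times\TT},\pi|)$ over $\restrict GU$ and $(\restrict EW,\iota|_{W\times\TT},\pi|)$ over $\restrict GW$, and these are again \etale. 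At the level of $C_c$, extension-by-zero gives $i\colon C_c(\restrict EU;\restrict GU)\hookrightarrow C_c(E;G)$, which is a well-defined $*$-homomorphism onto the ideal $\{f\in C_c(E;G) : \supp f\subset \pi\inv(r\inv(U))\}$ because $U$ is invariant (so products of such functions with arbitrary elements of $C_c(E;G)$ are again supported over $\restrict GU$); and restriction $j\colon C_c(E;G)\to C_c(\restrict EW;\restrict GW)$, $f\mapsto f|_{\pi\inv(r\inv(W))}$, is a surjective $*$-homomorphism (surjectivity because any element of $C_c(\restrict EW;\restrict GW)$ extends continuously to $C_c(E;G)$ using the local triviality of $\pi$ and a Tietze/partition-of-unity argument, then averaging over $\TT$ via $\Upsilon$ to make it $\TT$-equivariant). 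Exactness at $C_c$ is the statement $\ker j = \operatorname{im} i$, which is immediate: $f$ restricts to $0$ on $\restrict EW$ iff $f$ vanishes off $\pi\inv(r\inv(U))$.

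For the \emph{full} sequence \eqref{ses-full}: $i$ is $I$-norm contractive so extends to $i\colon \Cst(\restrict EU;\restrict GU)\to\Cst(E;G)$, and $j$ extends to $j\colon\Cst(E;G)\to\Cst(\restrict EW;\restrict GW)$. Surjectivity of $j$ on the $\Cst$-level follows from density of its range. The key points are injectivity of $i$ and exactness in the middle. For these I would use the universal property: every representation of $C_c(\restrict EU;\restrict GU)$ extends (via the conditional-expectation/approximate-unit argument, or by the disintegration theorem for twisted groupoids) to a representation of $C_c(E;G)$ whose restriction recovers it — this gives $\|f\|_{\Cst(\restrict EU;\restrict GU)}\le\|i(f)\|_{\Cst(E;G)}$, hence equality since $i$ is contractive, so $i$ is isometric. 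For exactness in the middle, $j\circ i=0$ is clear, and if $a\in\ker j$ one approximates $a$ by $f\in C_c(E;G)$ with $\|j(f)\|$ small; a cutoff-function argument (multiplying by $f_h$ from \eqref{eq-diagonal} with $h\in C_c(U)$ chosen via an approximate identity adapted to $\supp f$ and to the open set $U$) pushes $f$ into $C_c(\restrict EU;\restrict GU)$ up to small error, giving $\ker j\subset\overline{\operatorname{im} i}=\operatorname{im} i$.

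For the \emph{reduced} sequence \eqref{ses-red}: the maps $i_\red$ and $j_\red$ are defined as completions of $i$ and $j$ in the reduced norms. Surjectivity of $j_\red$ and the relation $j_\red\circ i_\red=0$ are as before. Injectivity of $i_\red$ is easy here: for $x\in \restrict EU{}\z = U$, the regular representation $\pi^x$ of $\Cst_\red(\restrict EU;\restrict GU)$ on $L^2(\restrict EU\, x;\restrict GU\, x)$ coincides with the compression of the regular representation $\pi^x$ of $\Cst_\red(E;G)$ to that (invariant, since $U$ is invariant) subspace, because the orbit of $x$ lies in $U$; taking sup over $x\in U$ shows $i_\red$ is isometric. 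The genuinely delicate point — and the reason for the hypothesis ``the reduced norm on $C_c(\restrict EW)$ equals the universal norm'' — is exactness in the middle of \eqref{ses-red}: reduced groupoid $\Cst$-algebra functors are not exact in general, and one needs the quotient $\Cst(E;G)/\ker j_\red$ to be identified with $\Cst_\red(\restrict EW;\restrict GW)$. The plan is: $\ker j_\red$ contains (the image of) the ideal $\Cst_\red(\restrict EU;\restrict GU)$, and the quotient $\Cst_\red(E;G)/i_\red(\Cst_\red(\restrict EU;\restrict GU))$ is, by a diagonal-argument comparison of the families $\{\pi^x : x\in W\}$ on each side, a $\Cst$-completion of $C_c(\restrict EW;\restrict GW)$ that dominates the reduced norm and is dominated by the full norm; since by hypothesis these two norms coincide on $C_c(\restrict EW)$ (and hence on the ideal $C_c(\restrict EW;\restrict GW)$, which is a direct summand), the quotient is exactly $\Cst_\red(\restrict EW;\restrict GW) = \Cst(\restrict EW;\restrict GW)$, giving exactness. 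I expect this last step — the identification of the quotient $\Cst$-norm with the reduced norm on $\restrict EW$, using the hypothesis — to be the main obstacle, since it requires comparing regular representations across the quotient map and is where the non-exactness of the reduced construction must be circumvented.
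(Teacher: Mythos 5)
Your plan is correct in outline, but it takes a genuinely different---and much heavier---route than the paper. The paper does not re-prove any exactness statements at the twisted level: it applies the known untwisted sequences of \cite[Theorem~5.1 and Proposition~5.2]{Williams:groupoid} to the groupoid $E$ itself, equipped with the Haar system of \Cref{Haar system} and the open invariant subset $U\subset E\z=G\z$ (invariance for $E$ is the same as for $G$, since $\pi$ is range and source preserving), and then observes that, because $U$ and $W$ are invariant and the $\TT$-action preserves $\restrict{E}{U}$ and $\restrict{E}{W}$, the inclusion and restriction maps respect the decompositions of $\Cst(\restrict{E}{U})$, $\Cst(E)$ and $\Cst(\restrict{E}{W})$ (and their reduced versions) into the twisted algebra plus a complementary summand coming from the averaging map $\Upsilon$. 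Exactness of \eqref{ses-full} and \eqref{ses-red} then follows by passing to direct summands; this also explains why the hypothesis is phrased for $C_c(\restrict{E}{W})$ rather than for the twisted algebra---it is exactly the hypothesis of Williams' Proposition~5.2 for the groupoid $E$ (and, as you note, it passes to the twisted summand). Your approach instead reconstructs the exactness arguments directly for the twisted algebras; that is viable and self-contained, but it forces you to redo the hard ingredients (disintegration/extension of representations for injectivity of $i$ and for middle exactness, and the norm comparison for the reduced sequence), whereas the paper's direct-summand trick reduces everything to a citation.

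If you do pursue your route, two steps need more than the sketch provides. First, in the middle exactness of \eqref{ses-full}, smallness of $\|j(f)\|$ only gives smallness of $\|f\|_\infty$ on $\restrict{E}{W}$, and the cutoff error $(1-h\circ r_E)f$ is then bounded in the $I$-norm only up to the factor $\sup_x\sigma^x\bigl(\supp f\cup(\supp f)\inv\bigr)$, which is not uniformly controlled along a sequence of $C_c$-approximants of an arbitrary $a\in\ker j$; the clean argument is the representation-theoretic one (every representation killing the ideal disintegrates over a measure concentrated on $W$, hence factors through $j$ with the universal bound), which you mention only as an alternative. Second, for \eqref{ses-red} the assertion that the quotient norm on $\Cst_\red(E;G)/i_\red\bigl(\Cst_\red(\restrict{E}{U};\restrict{G}{U})\bigr)$ is dominated by the full norm of $\Cst(\restrict{E}{W};\restrict{G}{W})$ is precisely the nontrivial point; the easiest correct argument is a diagram chase against the full sequence: lift $a\in\ker j_\red$ to $b\in\Cst(E;G)$, use injectivity of $\Cst(\restrict{E}{W};\restrict{G}{W})\to\Cst_\red(\restrict{E}{W};\restrict{G}{W})$ (supplied by your hypothesis) to conclude $j(b)=0$, apply exactness of \eqref{ses-full}, and push back down. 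With those two points repaired, your proof goes through, but it amounts to re-deriving the cited untwisted results in the twisted setting.
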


\begin{proof} By \cite[Theorem~5.1]{Williams:groupoid}, the inclusion and restriction maps induce the short exact sequence 
\begin{equation*}
0\longrightarrow\Cst( \restrict{E}{U})\stackrel{i}{\longrightarrow}  \Cst(E)\stackrel{j}{\longrightarrow}  \Cst(\restrict{E}{W})\longrightarrow 0,
\end{equation*}
and if the reduced norm on $C_c(\restrict{E}{W})$ equals the universal norm, by \cite[Proposition~5.2]{Williams:groupoid} they also induce 
\begin{equation*}
0\longrightarrow\Cst_\red( \restrict{E}{U})\stackrel{i_{\red}}{\longrightarrow}  \Cst_\red(E)\stackrel{j_{\red}}{\longrightarrow}  \Cst_\red(\restrict{E}{W})\longrightarrow 0.
\end{equation*}
Since $U$  is invariant, for $z\in\TT$ we have $z\cdot e\in E|_U$ if and only if $e\in E|_U$.
    
It follows that the inclusion map takes  $C_c( \restrict{E}{U};  \restrict{G}{U})$ to $C_c(E;G)$. Similarly, $W$ is invariant, and the restriction map takes $C_c(E;G)$ to $C_c( \restrict{E}{W};  \restrict{G}{W})$. 
Since $\Cst( \restrict{E}{U};  \restrict{G}{U})$,  $\Cst(E;G)$ and $\Cst(\restrict{E}{W};\restrict{G}{W})$ are direct summands of $\Cst( \restrict{E}{U})$, $\Cst( E)$ and $\Cst(\restrict{E}{W})$, respectively, it follows that \Cref{ses-full} is exact.  Similarly,  \Cref{ses-red} is exact.
\end{proof}

\begin{lemma}\label{twisted unitizations'}    Let $(E, \iota, \pi)$ be a twist over a
\LCH\ and \etale\ groupoid~$G$. Then $\Cst(\widetilde E;\cG)\cong \Cst(E;G)^\sim$ and $\Cst_\red(\widetilde E;\cG)\cong \Cst_\red(E;G)^\sim$.
\end{lemma}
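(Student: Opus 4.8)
The plan is to realize $\Cst(E;G)$ as a closed ideal in $\Cst(\widetilde E;\cG)$ with quotient $\CC$, and then to invoke the elementary fact that a unital $\Cst$-algebra $B$ with a closed ideal $A$ satisfying $B/A\cong\CC$ is canonically $*$-isomorphic to the minimal unitization $\widetilde A$: the class of $1_B$ is necessarily nonzero in $B/A$, so $A$ has codimension one, $A\cap\CC 1_B=0$, and $a+\lambda 1_B\mapsto(a,\lambda)$ is the desired $*$-isomorphism. This reasoning applies verbatim to the full and the reduced algebras, so the substantive task is to produce the two short exact sequences, for which I would use \Cref{reduced and full ses}. (Throughout, $G\z$ is non-compact, since otherwise $\cG$ and $\widetilde E$ are not defined.)

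I would apply \Cref{reduced and full ses} with the invariant open subset $U\coloneqq G\z$ of $\cG\z=G\z\cup\{\infty\}$ and complement $W\coloneqq\{\infty\}$: these are invariant because $\widetilde r\inv(\infty)=\widetilde s\inv(\infty)=\{\infty\}$ in $\cG$, $U$ is open since $G\z$ is an open subspace of its own one-point compactification, and $W$ is closed. The key point is to identify the two outer terms. On the one hand, $\restrict{\widetilde E}{G\z}=E$ as a set (no $\infty_z$ has range or source in $G\z$), the subspace topology it inherits from $\widetilde E$ is the original topology on $E$ by \Cref{tilde E' v2}\,\ref{tilde E 3}, the restricted maps are $\widetilde\iota|_{G\z\times\TT}=\iota$ and $\widetilde\pi|_E=\pi$ over $\restrict{\cG}{G\z}=G$, and it follows from \eqref{sigma} that the induced Haar system on $\widetilde E$ restricts on $E$ to the induced Haar system of $(E,\iota,\pi)$; hence $\Cst(\restrict{\widetilde E}{G\z};\restrict{\cG}{G\z})=\Cst(E;G)$, and likewise for the reduced algebras. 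On the other hand, $\restrict{\widetilde E}{W}=\{\infty_z:z\in\TT\}$ is the circle group $\TT$ (with unit $\infty_1$), $\restrict{\cG}{W}=\{\infty\}$ is the trivial group, and any $f\in C_c(\restrict{\widetilde E}{W};\restrict{\cG}{W})$ satisfies $f(\infty_{zw})=zf(\infty_w)$, so it is determined by $f(\infty_1)\in\CC$; the sum formula for convolution on a twisted groupoid $\Cst$-algebra then shows that $f\mapsto f(\infty_1)$ is a $*$-isomorphism onto $\CC$, extending to $\Cst(\restrict{\widetilde E}{W};\restrict{\cG}{W})\cong\CC$ and similarly in the reduced case. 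Finally, $\TT$ is compact and hence amenable, so the untwisted reduced and universal norms on $C_c(\restrict{\widetilde E}{W})$ agree and the hypothesis of \Cref{reduced and full ses} needed for the reduced sequence is satisfied.

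With these identifications, \Cref{reduced and full ses} produces short exact sequences
\[
0\longrightarrow\Cst(E;G)\longrightarrow\Cst(\widetilde E;\cG)\longrightarrow\CC\longrightarrow 0
\]
and its reduced analogue. To apply the elementary fact above it remains to check that $\Cst(\widetilde E;\cG)$ and $\Cst_\red(\widetilde E;\cG)$ are unital: since $\cG\z$ is compact, $1_{\cG\z}\in C_c(\cG\z)$, and a short direct computation with the sum formula for convolution and with $\TT$-equivariance shows that the function $f_{1_{\cG\z}}\in C_c(\widetilde E;\cG)$ from \eqref{eq-diagonal} is a two-sided unit for $C_c(\widetilde E;\cG)$, hence for its completions in both norms. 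Taking $B=\Cst(\widetilde E;\cG)$ and $A=\Cst(E;G)$ (respectively their reduced versions) then yields $\Cst(\widetilde E;\cG)\cong\Cst(E;G)^\sim$ and $\Cst_\red(\widetilde E;\cG)\cong\Cst_\red(E;G)^\sim$.

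The main obstacle is not conceptual but one of bookkeeping: verifying that restricting the Alexandrov twist over $\cG$ back to the invariant set $G\z$ reproduces the twist $(E,\iota,\pi)$ \emph{together with} its induced Haar system on the nose, so that \Cref{reduced and full ses} applies without modification, and pinning down the boundary quotient $\Cst(\restrict{\widetilde E}{W};\restrict{\cG}{W})$ as exactly $\CC$. The only genuine subtlety is remembering to invoke amenability of $\TT$ in order to quote the reduced version of \Cref{reduced and full ses}.
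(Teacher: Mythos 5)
Your proposal is correct and follows essentially the same route as the paper: apply \Cref{reduced and full ses} to the Alexandrov twist with $U=G\z$ and $W=\{\infty\}$, identify the ideal with $\Cst(E;G)$ (resp.\ its reduced version) and the quotient $\Cst(\restrict{\widetilde E}{W};\restrict{\cG}{W})$ with $\CC$ via $f\mapsto f(\infty_1)$, using that this boundary groupoid is a copy of $\TT$ so the reduced and full norms agree. The only difference is cosmetic: you spell out the unitality of $\Cst(\widetilde E;\cG)$ via the diagonal element $f_{1_{\cG\z}}$ and the codimension-one bookkeeping, which the paper leaves implicit.
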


\begin{proof}  We will apply \Cref{reduced and full ses} to the twist $(\widetilde E, \widetilde \iota, \widetilde \pi)$ with the open and invariant subset $U=G\z $ and its complement $W=\{\infty\}$. Since the restricted groupoid $ \restrict{\widetilde E}{W}=\{\infty_w : w\in \TT\}$ is isomorphic to $\TT$, the reduced and universal norms on $C_c(\restrict{\widetilde{E}}{W})$ coincide, and so \Cref{reduced and full ses} gives short exact sequences on the level of both the full and reduced $\Cst$-algebras.

We start by proving that $C_c( \restrict{\widetilde E}{W};\restrict{\cG}{W})$ is isomorphic to $\CC$. Here  $C_c( \restrict{\widetilde E}{W}; \restrict{\cG}{W})$ consists of functions $f\colon \{ \infty_w : w\in\TT\}\to\CC$ such that  $z f(\infty_w)=f(z\cdot \infty_w)$. Define $\rho\colon C_c( \restrict{\widetilde E}{W}; \restrict{\cG}{W})\to\CC$ by $\rho(f)= f(\infty_{1})$. Let  $f,g\in C_c( \restrict{\widetilde E}{W}; \restrict{\cG}{W})$. Then for any section $\mathfrak{s}\colon \cG \to\widetilde{E}$ of $\widetilde{\pi}$ we have
 \begin{align}
 \begin{split}\label{eq:rho multiplicative}
    \rho(f\ast g)&=(f\ast g)(\infty_{1})
    =
    \sum_{\gamma\in \infty\cG } f(\mathfrak{s}(\gamma))g(\mathfrak{s}(\gamma)\inv \infty_{1})
    \\&=
    f(\infty_{z})g(\infty_{\overline{z}}\infty_{1})
    =
    z\overline{z}f(\infty_{1})g(\infty_{1})
    =\rho(f)\rho(g)
\end{split}
 \end{align}
 and $\rho(f^*)=\overline{f(\infty_{1}\inv )}=\overline{f(\infty_{1})}$. Thus, $\rho$ is a $*$-homomorphism.     Since $\cG$ is \etale, so is $\restrict{\cG}{W}$ by \Cref{restriction of twist non-etale}.  
 Thus $\rho$  extends to a homomorphism $\rho\colon \Cst(\restrict{\widetilde{E}}{W}; \restrict{\cG}{W})\to \CC$ by \cite[Lemma~2.14]{Armstrong:twistedgpds}. It is clear that $\rho$ is surjective. To see that $\rho$ is injective, we note that since the reduced and universal norm on $C_c( \restrict{\widetilde E}{W})$ coincide they also coincide on $C_c( \restrict{\widetilde E}{W};\restrict{\cG}{W})$. Thus,  $\|f\|=\|f\|_\red=\|\pi^{\infty}(f)\|$ where 
 $\pi^\infty$ is as defined in \eqref{repn pi}. In order to compute $\pi^\infty(f)$, note that for 
 $\xi\in L^2(\restrict{\widetilde{E}}{W}\infty;\restrict{\cG}{W}\infty)=L^2(\widetilde{E}\infty;\{\infty\})$, we have
 \begin{align*}
    \|\xi \|^2
    &=
    \int_{\widetilde{E}} 
    | \xi(e)|^2\dd\sigma_{\infty}
    =
    \sum_{\gamma\in \infty\cG} \int_{\TT} | \xi(z\cdot \mathfrak{s}(\gamma))|^2\dd\lambda(z)
    \\&=
    \int_{\TT} | \xi(z\cdot \mathfrak{s}(\infty))|^2\dd\lambda(z)
    =
    \int_{\TT} | \xi(\infty_z)|^2\dd\lambda(z)
    =
    |\xi (\infty_{1})|^2.
 \end{align*}
    A computation analogous to that in \Cref{eq:rho multiplicative} then shows that 
    \[\| \pi^{\infty} (f)\xi\| =|(f*\xi)(\infty_{1})|\overset{\eqref{eq:rho multiplicative}}{=} |f(\infty_{1})|\, |\xi(\infty_{1})|= |\rho(f)|\, \|\xi \| ,
    \]
 so that $\|\pi^{\infty}(f)\| = |\rho(f)|$. It follows that $\rho$ is an isometric isomorphism of $\Cst( \restrict{\widetilde E}{W};\restrict{\cG}{W})=\Cst_\red( \restrict{\widetilde E}{W};\restrict{\cG}{W})$ onto $\CC$.

As mentioned previously, \Cref{reduced and full ses}  gives short exact sequences; by what we have shown so far, they can be rewritten to
\begin{gather*}
   0\longrightarrow\Cst( \restrict{\widetilde E}{U}; \restrict{\cG}{U})\stackrel{ i}{\longrightarrow}  \Cst(\widetilde E; \cG)\stackrel{ j}{\longrightarrow}  \CC\longrightarrow 0;\\
    0\longrightarrow\Cst_\red( \restrict{\widetilde E}{U}; \restrict{\cG}{U})\stackrel{
 i_{\red}}{\longrightarrow}  \Cst_\red(\widetilde E;\cG)\stackrel{
  j_{\red}}{\longrightarrow}  \CC\longrightarrow 0.
\end{gather*}
Since $ \restrict{\widetilde E}{U}=E$ and $\restrict{\cG}{U}=G$, this shows that $\Cst (E;G)$ and $\Cst_\red (E;G)$ are ideals in $\Cst (\widetilde{E}; \cG )$ and $\Cst_\red (\widetilde{E}; \cG )$, respectively, both of codimension $1$. This completes the proof. 
\end{proof}

Let $E=\TT\times G$ be the trivial twist, which as a groupoid is the Cartesian product. Then the full (or reduced) twisted groupoid $\Cst$-algebra is isomorphic to the full (or reduced) $\Cst$-algebra of $G$ and we obtain the following corollary of \Cref{twisted unitizations'}.

\begin{corollary}\label{lem:C*red for unitization'}
    Let~$G$ be a second-countable, \LCH\ and \etale\ groupoid with non-compact unit space, and let $\cG $ be its Alexandrov groupoid. Then $\Cst (\cG )\cong \Cst(G)^\sim$ and $\Cst_\red (\cG )\cong \Cst_\red (G)^\sim$.
\end{corollary}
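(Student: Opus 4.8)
The plan is to deduce this corollary directly from \Cref{twisted unitizations'} by identifying $\Cst(G)$ and $\Cst_\red(G)$ with the twisted groupoid $\Cst$-algebras of the trivial twist $E=\TT\times G$. First I would make precise the claim in the sentence preceding the corollary: for the trivial twist $E = \TT\times G$ (with groupoid structure the Cartesian product, $\iota(x,z)=(z,x)$ and $\pi(z,\gamma)=\gamma$), the subalgebra $C_c(E;G)$ of $\TT$-equivariant functions $f(w\cdot(z,\gamma)) = f(wz,\gamma) = wf(z,\gamma)$ consists exactly of functions of the form $(z,\gamma)\mapsto z\, g(\gamma)$ for $g\in C_c(G)$, and the map $g\mapsto f_g$, $f_g(z,\gamma)=z g(\gamma)$, is a $*$-isomorphism $C_c(G)\to C_c(E;G)$ for the convolution and involution inherited from the induced Haar system on $E$ (here the induced Haar system is literally $\lambda\times(\text{counting})$, so the integral over $\TT$ contributes a factor $1$ and the formulas match on the nose). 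One then checks this isomorphism is isometric for both the full and reduced norms — for the reduced norm this is because the representations $\pi^x$ of $\Cst(E;G)$ on $L^2(Ex;Gx)$ are unitarily equivalent to the regular representations $L^x$ of $\Cst(G)$ on $\ell^2(Gx)$ via $\xi\mapsto(\gamma\mapsto\xi(1,\gamma))$, exactly as in the displayed norm computation for $L^2(Ex;Gx)$ in the preliminaries — so it extends to $\Cst(E;G)\cong\Cst(G)$ and $\Cst_\red(E;G)\cong\Cst_\red(G)$.

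Next I would observe that the Alexandrov twist of the trivial twist $E=\TT\times G$ is again trivial over $\cG$: from the construction in \Cref{tilde E' v2}, $\widetilde E = (\TT\times G)\cup\{\infty_z:z\in\TT\}$ with $\widetilde\iota(\infty,z)=\infty_z$ and $\infty_w\infty_z=\infty_{wz}$, and the map $\widetilde E\to\TT\times\cG$ sending $(z,\gamma)\mapsto(z,\gamma)$ and $\infty_z\mapsto(z,\infty)$ is a homeomorphism and groupoid isomorphism identifying $(\widetilde E,\widetilde\iota,\widetilde\pi)$ with the trivial twist $(\TT\times\cG,\,\iota_{\cG},\,\pi_{\cG})$ over $\cG$ (one checks the topology $\mathfrak V$ matches the product topology: the basic sets $\widetilde\iota(W\times S)$ for $W\subseteq\cG\z$, $S\subseteq\TT$ correspond to $S\times W\subseteq\TT\times\cG\z$, which are open in the product, and conversely). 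Applying the isomorphisms of the previous paragraph, now over $\cG$ rather than $G$, gives $\Cst(\widetilde E;\cG)\cong\Cst(\cG)$ and $\Cst_\red(\widetilde E;\cG)\cong\Cst_\red(\cG)$.

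Finally I would combine these: \Cref{twisted unitizations'} gives $\Cst(\widetilde E;\cG)\cong\Cst(E;G)^\sim$ and $\Cst_\red(\widetilde E;\cG)\cong\Cst_\red(E;G)^\sim$; substituting the trivial-twist identifications on both sides yields $\Cst(\cG)\cong\Cst(G)^\sim$ and $\Cst_\red(\cG)\cong\Cst_\red(G)^\sim$. (The second-countability hypothesis in the corollary is not actually needed for this chain of isomorphisms and is presumably carried along only to keep the standing hypotheses of the section uniform; \Cref{twisted unitizations'} itself does not require it.) I do not expect any serious obstacle here — the whole argument is a bookkeeping reduction — but the one point requiring a little care is verifying that the Alexandrov-twist topology $\mathfrak V$ of the trivial twist is exactly the product topology on $\TT\times\cG$ under the identification above, since $\mathfrak V$ is defined via the possibly-awkward-looking images $\widetilde\iota(V)$ rather than as a product; this is routine once one notes $\widetilde\iota(W\times S)=\{(z,\gamma):\gamma\in W\setminus\{\infty\},z\in S\}\cup\{\infty_z:z\in S\}$ when $\infty\in W$ and corresponds cleanly to $S\times W$ in $\TT\times\cG$.
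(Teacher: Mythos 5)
Your proposal is correct and follows essentially the same route as the paper: the paper obtains the corollary by applying \Cref{twisted unitizations'} to the trivial twist $E=\TT\times G$, using (without spelling out the details you supply) that $\Cst(E;G)\cong\Cst(G)$, $\Cst_\red(E;G)\cong\Cst_\red(G)$, and that the Alexandrov twist of the trivial twist is the trivial twist over $\cG$. Your verification of these identifications, including the check that the topology $\mathfrak V$ agrees with the product topology on $\TT\times\cG$, is exactly the bookkeeping the paper leaves implicit.
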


\begin{remark}
Let $G$ be a locally compact, not necessarily Hausdorff, \etale\ groupoid with Hausdorff unit space. In \cite{Exel-Pitts:book}, Exel and Pitts have proposed that a certain  quotient of $\Cst_\red(G)$, called the {\em{essential groupoid $\Cst$-algebra}} and denoted by $\Cst_{\ess}(G)$, is the right replacement for the reduced $\Cst$-algebra of $G$  when $G$ is topologically principal.  Evidence towards this is that if $G$ is topologically principal, then (1) $C_0(G\z )$ detects ideals of $\Cst_{\ess}(G)$ and (2) $\Cst_{\ess}(G)$ is simple if and only $G$ is minimal  \cite[Theorem~22.6]{Exel-Pitts:book}. Again if  $G$ is topologically principal,  then the ideal  $I$  of $\Cst_\red(G)$ giving the quotient  $\Cst_{\ess}(G)$ consists of singular functions, and  $I$ vanishes when $G$ is Hausdorff \cite[Proposition~18.9]{Exel-Pitts:book}. The essential groupoid $\Cst$-algebras have been studied further, for example in  \cite{KM21, KKLRU}. In particular, there is a version of the 
Alevandrov groupoid
$\widetilde G$ for a non-Hausdorff $G$, and \cite[Proposition~7.8]{KKLRU} says that $\Cst_\ess (\cG )$ is isomorphic to $\Cst_\ess (G)^\sim$; applying this to the Hausdorff case, this gives another proof of \Cref{lem:C*red for unitization'} for the reduced groupoid $\Cst$-algebras.
\end{remark}

\subsection{Dynamic asymptotic dimension of the Alexandrov groupoid}\label{sect: dad for Alexandrov}

Finally, we are ready to show that dynamic asymptotic dimension is preserved by the Alexandrov construction. We begin with a brief digression which establishes, in particular, that the passage of finite nuclear dimension to quotients and ideals can be witnessed at the groupoid level with dynamic asymptotic dimension.

\begin{lemma}\label{lem:dad of restriction'}
Let~$G$ be a \LCH\ and \etale\ groupoid. Suppose that either
\begin{enumerate}[label=\textup{(\arabic*)}]
    \item\label{item:dad of restriction:open'} 
    $H = \restrict{G}{U}$ for some open set $U\subset G\z$,
    or
    \item $H$ is a closed subgroupoid of~$G$.
\end{enumerate}
If  $\dad(G) = d$, then $\dad(H) \leq d$.
\end{lemma}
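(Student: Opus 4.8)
The plan is to take an open, precompact $V \subseteq H$ and to verify the dynamic asymptotic dimension condition for $H$ at $V$ using the hypothesis $\dad(G) = d$ applied to a suitable open, precompact subset of $G$. First I would handle case~\ref{item:dad of restriction:open'}, where $H = \restrict{G}{U}$ with $U$ open in $G\z$. Here $H$ is an open subgroupoid of $G$, so any open precompact $V \subseteq H$ is also open in $G$; moreover, since $U$ is open (hence locally closed) in $G\z$, \Cref{restriction of twist non-etale} guarantees $H$ is again \LCH\ and \etale, so $\dad(H)$ is defined. Using $\dad(G) = d$, choose open sets $U_0, \dots, U_d \subseteq G\z$ covering $s(V) \cup r(V)$ with each $\langle V \cap \restrict{G}{U_i}\rangle$ precompact in $G$. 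Then I would set $U_i' \coloneqq U_i \cap U$, which are open in $H\z = U$ (and hence in $G\z$) and still cover $s(V) \cup r(V)$ since $s(V) \cup r(V) \subseteq U$. The key observation is that $V \cap \restrict{G}{U_i'} = V \cap \restrict{G}{U_i}$ because $V \subseteq H$ already forces $s(\gamma), r(\gamma) \in U$; hence the generated subgroupoid is unchanged and is precompact in $G$, and by \Cref{lem:cpctness}\ref{cpctness:item:precpct in X vs Y} (noting its closure is contained in the precompact-generated set inside $G$, but we actually just need precompactness, which transfers since $\langle V \cap \restrict{G}{U_i'}\rangle$ sits inside the larger precompact subgroupoid) it is precompact in $H$ as well. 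One subtlety: the generated subgroupoid $\langle V \cap \restrict{G}{U_i'}\rangle$ computed inside $H$ agrees with the one computed inside $G$, since $H$ is a subgroupoid of $G$ containing $V \cap \restrict{G}{U_i'}$, so there is no ambiguity; being a subset of a precompact set it is precompact.

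For case~(2), where $H$ is a closed subgroupoid of $G$, the argument is analogous but requires a little more care because an open precompact $V \subseteq H$ need not be open in $G$. I would use that $H$ carries the subspace topology from $G$ (and is \LCH, being closed in a \LCH\ space, and \etale\ with the counting-measure Haar system), so $V = V' \cap H$ for some open $V' \subseteq G$; shrinking $V'$ I may assume $V'$ is precompact in $G$ — here I use that $V$ is precompact in $H$, hence (by \Cref{lem:cpctness}\ref{cpctness:item:precpct in Y}, since $G$ is Hausdorff) precompact in $G$, so $\overline{V}^G \subseteq H$ is compact and I can choose a precompact open $V' \supseteq V$ in $G$ with, say, $\overline{V'}^G$ a precompact neighborhood of the compact set $\overline{V}^G$. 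Apply $\dad(G) = d$ to $V'$ to obtain open $U_0, \dots, U_d \subseteq G\z$ covering $s(V') \cup r(V') \supseteq s(V) \cup r(V)$ with each $\langle V' \cap \restrict{G}{U_i}\rangle$ precompact in $G$. Set $U_i' \coloneqq U_i \cap H\z$, which are open in $H\z$ and cover $s(V) \cup r(V)$. Then $V \cap \restrict{H}{U_i'} = V \cap \restrict{G}{U_i} \subseteq V' \cap \restrict{G}{U_i}$, so $\langle V \cap \restrict{H}{U_i'}\rangle$ is contained in the precompact-in-$G$ subgroupoid $\langle V' \cap \restrict{G}{U_i}\rangle$; since that subgroupoid's closure in $G$ is compact and, being a closed subset, need not lie in $H$, I instead observe that $\langle V \cap \restrict{H}{U_i'}\rangle$ lies inside $H$ by construction and inside a precompact subset of $G$, so its closure in $H$ equals its closure in $G$ intersected with $H$, which is compact — hence it is precompact in $H$. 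This gives $\dad(H) \leq d$.

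The main obstacle I anticipate is the bookkeeping around \emph{where} precompactness is being measured and \emph{where} the generated subgroupoids live — one must consistently track that $\langle X \rangle$ computed in a subgroupoid $H$ of $G$ coincides with $\langle X \rangle$ computed in $G$ whenever $X \subseteq H$, and that a subset of a precompact set in a Hausdorff space is precompact with the closure behaving well under passing to the subspace (which is exactly what \Cref{lem:cpctness} is for). Beyond that, the only genuine topological input is that both restrictions $\restrict{G}{U}$ (open case) and closed subgroupoids inherit the \LCH\ and \etale\ structure so that $\dad$ is even defined for $H$; the open case is immediate from \Cref{restriction of twist non-etale}, and the closed case follows since closed subsets of \LCH\ spaces are \LCH\ and an \etale\ groupoid's closed subgroupoid with the induced counting measures is \etale. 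No new estimates are needed — the covering number $d+1$ is simply inherited, and the precompact subgroupoids only shrink.
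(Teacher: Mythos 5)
Your closed case is essentially the paper's argument and is correct: there the containment $\langle V\cap \restrict{H}{U_i'}\rangle\subseteq\langle V'\cap\restrict{G}{U_i}\rangle$ together with closedness of $H$ makes the transfer of precompactness from $G$ to $H$ legitimate, since the relevant closure in $G$ automatically stays in $H$. The open case, however, has a genuine gap at its final step. After noting that $V\cap\restrict{G}{U_i'}=V\cap\restrict{G}{U_i}$, you conclude that $G_i\coloneqq\langle V\cap\restrict{G}{U_i}\rangle$ is precompact in $H$ because ``it sits inside the larger precompact subgroupoid.'' That justification is invalid: for a subset $C$ of the \emph{open} subspace $H$, precompactness of $C$ in $G$ (or containment of $C$ in a set precompact in $G$) does not give precompactness in $H$; the hypothesis of \Cref{lem:cpctness}~\ref{cpctness:item:precpct in X vs Y} is that $\cl[G]{C}\subset H$, and nothing you have said controls where that closure lives. (Compare $(0,\tfrac12)\subset(0,1)\subset\RR$: contained in a precompact subset of $\RR$, yet not precompact in $(0,1)$.)

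What is missing is exactly the verification that $\cl[G]{G_i}\subset H$, and this is where the hypothesis that $V$ is precompact \emph{in $H$} must enter — a hypothesis your open-case argument never uses. Since $G$ is Hausdorff, $\cl[G]{V}=\cl[H]{V}\subset H$; every element of $G_i$ has source and range in $s(V)\cup r(V)$, so by continuity of $r$ and $s$ any $\alpha\in\cl[G]{G_i}$ satisfies $s(\alpha),r(\alpha)\in s(\cl[G]{V})\cup r(\cl[G]{V})\subset U$, hence $\alpha\in H$. Only then does \Cref{lem:cpctness}~\ref{cpctness:item:precpct in X vs Y} yield precompactness of $G_i$ in $H$; this is the heart of the open case and is the step the paper spells out. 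A separate, minor point: your remark that a closed subgroupoid of an \etale\ groupoid is again \etale\ is false in general (such a subgroupoid is r-discrete, but its range map need not be open — e.g.\ adjoin to the unit space of a group bundle a single nontrivial isotropy element sitting over an accumulation point of the unit space); since the dynamic asymptotic dimension condition is purely topological this does not affect the argument, but the claim should not be asserted.
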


\begin{remark}
    Before proving \Cref{lem:dad of restriction'}, we remark on its significance. Suppose that $\dad(G)$ and  $\dim(G\z )$ are finite, so that $\Cst_{\red}(G)$ is nuclear by \cite[Theorem 8.6]{GWY:2017:DAD}. In particular, $G$ is amenable by \cite[Corollary~6.2.14]{AnRe:Am-gps} and any closed subgroupoid of $G$ is amenable as well \cite[Proposition~5.11]{AnRe:Am-gps}. Let $U\subset G\z $ be an open invariant subset and let $W\coloneqq G\z \setminus U$. Then $G|_W$ is amenable, and inclusion and extension by zero and restriction induce a short exact sequence 
    \[0 \to \Cst_{\red}(\restrict{G}{U})\to \Cst_{\red}(G)\to \Cst_{\red}(\restrict{G}{W})\to 0\]
    by \cite[Proposition~5.2]{Williams:groupoid}. Then by \cite[Proposition~2.9]{WinterZacharias:nuclear},
    \[\max\{\dim_{\text{nuc}}(\Cst_{\red}(\restrict{G}{U})), \dim_{\text{nuc}}(\Cst_{\red}(\restrict{G}{W}))\}\leq \dim_{\text{nuc}}(\Cst_{\red}(G)).
    \]
   \Cref{lem:dad of restriction'} tells us that this $\Cst$-algebraic statement is already witnessed at the groupoid level by finite dynamic asymptotic dimension. 
\end{remark}

\begin{proof}[Proof of \Cref{lem:dad of restriction'}]
Suppose first that $H=\restrict{G}{U}$ for some open subset $U$ of the unit space. Let $V$ be an open and precompact subset of $H$. Since $s$ and $r$ are open and continuous maps, we can replace $V$ by $V \cup s(V) \cup r(V)$ if necessary, so assume without loss of generality that $s(V) \cup r(V) \subset V$.
Since $H$ is open, $V$ is also open in~$G$, and is precompact in~$G$ with $\cl[G]{V} = \cl[H]{V} \subset H$ by \Cref{lem:cpctness}~\ref{cpctness:item:precpct in Y}. Since $\cl[G]{V} = \cl[H]{V}$, we drop the superscript and simply write $\cl{V}$.

Since $\dad(G) = d$ there exist  $U_0, \ldots, U_d \subset G\z $ which are open in~$G$, cover $s(V) \cup r(V)$ and such that the subgroupoid \[G_i=\langle V\cap G|_{U_i}\rangle=\langle V\cap G|_{U_i\cap V}\rangle\] of~$G$  is precompact in~$G$. For $0 \leq i \leq d$, let $V_i = U_i \cap V$.  Then $\{V_i\}_{i=0}^d$ is an open  cover of $s(V) \cup r(V)$ in $H$. Notice  that $G_i=\langle V\cap G|_{V_i}\rangle$  is a subgroupoid of $H$. If $\alpha\in \cl[G]{G_{i}}$, then $s(\alpha)\in \cl[G]{V_{i}}$ by continuity of the source map. This implies that $s(\alpha)\in s(\cl{V}) \subset U$ because $V$ is a precompact subset of $H$. Similarly,  $r(\alpha)\in U$, and hence $\alpha\in H$. Thus, the closure of $G_i$ in $G$ is contained in $H$ and hence $G_{i}$ is precompact in $H$ by  \Cref{lem:cpctness}~\ref{cpctness:item:precpct in X vs Y}. This proves that $\dad(H)\leq d$.

\medskip
Now suppose that $H$ is a closed subgroupoid of~$G$. Let $V\subset H$ be a precompact open subset of $H$. Let $V'$ be open in~$G$ such that $V=H\cap V'$. By \Cref{lem:cpctness}~\ref{cpctness:item:precpct in Y}, $V$ is precompact in~$G$, and since~$G$ is locally compact, we can find a {\em precompact} open set $W$ containing $\cl[G]{V}$; replacing $V'$ by $V' \cap W$ if necessary we can assume without loss of generality that $V'$ is precompact.

We now apply $\dad(G)\leq d$ to $V'$ to get open sets $U'_{0},\ldots,U'_{d}$ of $G\z $ such that $s(V') \cup r(V')\subset \cup_{i=0}^{d} U'_{i}$ and such that $\langle \restrict{G}{U'_i}\cap V' \rangle$ is precompact. Then the collection of $U_{i}\coloneqq U'_{i}\cap H$ is an open cover in $H$ of $s(V)\cup r(V)$. Furthermore, $\langle \restrict{H}{U_i}\cap V \rangle$ is contained in $\langle \restrict{G}{U'_i}\cap V' \rangle$, and is therefore precompact in~$G$. Since $H$ is closed, \Cref{lem:cpctness}~\ref{cpctness:item:precpct in X vs Y} implies that $\langle \restrict{H}{U_i}\cap V \rangle$ is also precompact in $H$. As $V$ was arbitrary, it follows that $\dad(H)\leq d$.
\end{proof}

\begin{prop} \label{DAD of Gtilde'}
    Suppose that $G$ is a \LCH\ and \etale\ groupoid. Then $\dad (G)=\dad (\cG)$.
\end{prop}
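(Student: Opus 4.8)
The plan is to prove the two inequalities $\dad(\cG) \le \dad(G)$ and $\dad(G) \le \dad(\cG)$ separately. The second is immediate: by \Cref{lem:etale:tilde G:topologically'}, $G$ is an open subgroupoid of $\cG$ with $G = \restrict{\cG}{\cG\z \setminus \{\infty\}}$, since $\cG\z \setminus \{\infty\} = G\z$ is open in $\cG\z$. Hence \Cref{lem:dad of restriction'}\ref{item:dad of restriction:open'} gives $\dad(G) \le \dad(\cG)$ directly, with no further work.

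For the harder inequality $\dad(\cG) \le \dad(G)$, suppose $\dad(G) = d < \infty$ (otherwise there is nothing to prove). First I would reduce to test sets containing $\cG\z$: since $\cG\z$ is compact, the remark after \Cref{defn-dad} lets us check the defining condition only for open precompact $V \subset \cG$ with $\cG\z \subset V$. Given such a $V$, note $V \cap G$ is open in $G$, and I claim it is precompact in $G$: indeed $\cl[\cG]{V}$ is compact and $\cl[\cG]{V} \cap G$ is closed in the compact set $\cl[\cG]{V}$, hence compact, and it contains $V \cap G$. (One must be slightly careful: $\cl[G]{V\cap G} \subseteq \cl[\cG]{V} \cap G$.) Apply $\dad(G) = d$ to the open precompact set $V \cap G$ in $G$: we obtain open sets $U_0, \ldots, U_d \subseteq G\z$ covering $s(V\cap G) \cup r(V\cap G)$ such that each $\langle (V\cap G) \cap \restrict{G}{U_i}\rangle$ is precompact in $G$. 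Now $\infty$ is a unit of $\cG$ and $\infty \in V$ (since $\cG\z \subset V$), so we need to place $\infty$ in one of the covering sets; define $\widetilde U_0 = U_0 \cup \{\infty\}$ and $\widetilde U_i = U_i$ for $i \ge 1$. Each $\widetilde U_i$ is open in $\cG\z$, and together they cover $s(V) \cup r(V) \subseteq \cG\z$ (this uses that $\widetilde s(V), \widetilde r(V) \subseteq \cG\z$ and that the only element of $\cG\z$ not in $s(V\cap G) \cup r(V\cap G)$ is $\infty$, which lies in $\widetilde U_0$).

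It remains to check that each subgroupoid $\langle V \cap \restrict{\cG}{\widetilde U_i}\rangle$ is precompact in $\cG$. For $i \ge 1$, since $\infty \notin \widetilde U_i = U_i$, any $\gamma \in \restrict{\cG}{\widetilde U_i}$ has $\widetilde s(\gamma), \widetilde r(\gamma) \ne \infty$, forcing $\gamma \in G$; thus $V \cap \restrict{\cG}{U_i} = (V \cap G) \cap \restrict{G}{U_i}$ and the generated subgroupoid equals $G_i := \langle (V\cap G) \cap \restrict{G}{U_i}\rangle$, which is precompact in $G$ and hence, being contained in the open subgroupoid $G$, precompact in $\cG$ by \Cref{lem:cpctness}\ref{cpctness:item:precpct in X vs Y} (using $\cl[G]{G_i} = \cl[\cG]{G_i}$, which holds since $\cl[G]{G_i}$ is compact in the Hausdorff space $\cG$, hence closed). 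For $i = 0$, the elements of $V \cap \restrict{\cG}{\widetilde U_0}$ with range or source equal to $\infty$ are precisely those among $\{\infty\} = \cG \setminus G$; since $\infty$ is a unit, $\langle V \cap \restrict{\cG}{\widetilde U_0}\rangle = G_0 \cup \{\infty\}$ where $G_0 = \langle (V\cap G)\cap \restrict{G}{U_0}\rangle$. Now $\cl[\cG]{G_0 \cup \{\infty\}} \subseteq \cl[\cG]{G_0} \cup \{\infty\}$, which is compact (being a finite union of compact sets: $\cl[\cG]{G_0}$ is compact as above). Hence $\langle V \cap \restrict{\cG}{\widetilde U_0}\rangle$ is precompact in $\cG$. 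This establishes $\dad(\cG) \le d$.

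The main obstacle I anticipate is purely bookkeeping around the point $\infty$: it is simultaneously a unit and the ``point at infinity,'' so one must verify carefully that adjoining $\{\infty\}$ to a precompact subgroupoid of $G$ keeps it precompact in $\cG$ (straightforward, since a single point is compact), and that the covering sets $\widetilde U_i$ of $\cG\z$ still satisfy the precompact-subgroupoid condition — the key observation being that for $i \ne 0$ the sets $U_i$ see no new elements in $\cG$, and for $i = 0$ the only new element is the unit $\infty$, which contributes nothing to the generated subgroupoid beyond itself. No deep input is needed beyond \Cref{lem:dad of restriction'}, \Cref{lem:cpctness}, and the description of $\cG$ in \Cref{lem:etale:tilde G:topologically'}.
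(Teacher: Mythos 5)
The easy inequality $\dad(G)\leq\dad(\cG)$ is handled exactly as in the paper, via \Cref{lem:dad of restriction'}\ref{item:dad of restriction:open'}. The hard inequality, however, has a genuine gap right at the start: your claim that $V\cap G$ is precompact in $G$ is false, and the justification you give relies on $\cl[\cG]{V}\cap G$ being closed in $\cl[\cG]{V}$, i.e.\ on $G$ being closed in $\cG$. But $G$ is open and \emph{dense} in $\cG$: since $G\z$ is non-compact, every basic neighborhood $(G\z\setminus K)\cup\{\infty\}$ of $\infty$ meets $G\z$, so $\infty\in\cl[\cG]{G\z}$ and $\cl[\cG]{G}=\cG$. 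In fact the claim cannot be repaired for the test sets you use: since $\cG\z\subset V$ you have $G\z\subset V\cap G$, and $G\z$ is closed and non-compact in $G$, so $\cl[G]{V\cap G}\supseteq G\z$ is never compact. Hence you are not allowed to feed $V\cap G$ into the definition of $\dad(G)=d$, and everything downstream collapses. There is a second, related problem: $\widetilde U_0=U_0\cup\{\infty\}$ is generally not open in $\cG\z$, because every open neighborhood of $\infty$ in $\cG\z$ must contain $G\z\setminus K$ for some compact $K\subset G\z$, and the $U_i$ produced by dynamic asymptotic dimension may (indeed, can be arranged to) be precompact, hence contain no such set. So your cover of $\cG\z$ is not a cover by open sets in the sense required by \Cref{defn-dad}.

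These two defects are exactly what the paper's proof is engineered to avoid, and fixing your argument essentially forces its moves. The paper replaces $V\cap G$ by $W=V\cap(G\setminus G\z)$: since $\cG\z$ is open in $\cG$, the set $\cG\setminus\cG\z=G\setminus G\z$ is closed, so $\cl[\cG]{W}\subset G\setminus G\z$ and \Cref{lem:cpctness}\ref{cpctness:item:precpct in X vs Y} gives precompactness of $W$ in $G$; one then applies $\dad(G)=d$ to a slightly larger precompact open $W'$. To cover $\infty$ one must enlarge $U_0$ not by the single point $\{\infty\}$ but by a genuine open neighborhood $\cG\z\setminus\cl{U}$ of $\infty$, where $U$ is obtained by normality of $\cG\z$ with $s(\cl{W})\cup r(\cl{W})\subset U\subset\cl{U}\subset\bigcup_i U_i$; this choice guarantees both that $U_0'=U_0\cup[\cG\z\setminus\cl{U}]$ is open in $\cG\z$ and that the enlargement contributes no new non-unit elements to $V\cap\restrict{\cG}{U_0'}$, so the generated subgroupoid is still $G_0\cup U_0'$, precompact because $U_0'\subset\cG\z$ and $\cG\z$ is compact. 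Your observation that units generate only themselves is correct and is also used in the paper, but without stripping the units out of $V$ before invoking $\dad(G)$, and without the normality step to make the covering set at $\infty$ open, the argument does not go through.
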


\begin{proof}
Throughout this proof, we will repeatedly but tacitly use that $G$ is an open subspace of $\cG$ and that a precompact subset of $G$ is precompact in $\cG$ (see  \Cref{lem:cpctness}~\ref{cpctness:item:precpct in Y}); as the two (compact) closures coincide, we will refrain from using superscripts on closures in this case.

Suppose that $\dad (G)=d$. Fix an open and precompact $V\subset \cG$. Since $\cG\z $ is compact and open, we may assume that $\cG\z \subset V$ by possibly replacing $V$ with $V\cup \cG\z$. Set $W= V \cap (G \setminus G\z)$. Then $W$ is precompact and open in~$\cG$ because $G \setminus G\z $ is open in~$G$ and $V$ is precompact and open in~$\cG$. Since $\cG\z$ is open in $\cG$, the closure of $W$ in $\cG$ is contained in $\cG\setminus\cG\z=G\setminus G\z$; by \Cref{lem:cpctness}~\ref{cpctness:item:precpct in X vs Y}, $W$ is hence precompact in~$G$. Since $G$ is locally compact and Hausdorff and $G\setminus G\z$ is open in $G$, there exists an open precompact subset $W'$ of $G$ with $ \cl{W} \subset W'\subset \cl{W'}\subset  G\setminus G\z $.

Since $\dad(G)=d$, there exist open subsets $U_0, \ldots, U_d$ of $G\z $ that cover $s_G(W') \cup r_G(W')$
such that $\langle W' \cap \restrict{G}{U_i}\rangle$ is a precompact subgroupoid of~$G$. By replacing each $U_i$ with $U_i \cap (s(W') \cup r(W'))$, we can without loss of generality assume that the $U_i$'s are precompact in~$G$. Since $W\cap \restrict{G}{U_{i}}\subset \langle W' \cap \restrict{G}{U_i}\rangle$, we have that $G_{i}\coloneqq \langle W \cap \restrict{G}{U_i}\rangle$ is also a precompact subgroupoid of $G$ for each $0\leq i\leq d$. Since $\cl{W}$ is compact, we know that $s(\cl{W})\cup r(\cl{W}) =s_G(\cl{W})\cup r_G(\cl{W})$ is closed in $\cG\z$. Since $\cG\z $ is normal and $s(\cl{W})\cup r(\cl{W})\subset s(W') \cup r(W')\subset \bigcup_{i=0}^d U_i$, we can thus find an open set $U\subset \cG\z $ such that
 \[s(\cl{W})\cup r(\cl{W})\subset U\subset \cl[\cG\z ]{U}=\cl[\cG]{U} \subset \bigcup_{i=0}^d U_i.\]
 Note that since $\bigcup_{i=0}^d U_i\subset G\z $, $U$ contains no neighborhood of $\infty$ and is hence also open in~$G$. Since each $U_{i}$ is precompact in $G$, it follows that $U$ is precompact in $G$, so $\cl{U}\coloneqq \cl[G]{U}=\cl[\cG]{U}$. Now, if we let $U_0'\coloneqq U_0\cup [\cG\z \setminus \cl{U}]$, then $\{U_0',U_{1},\ldots, U_d\}$ is a cover of $\cG\z =s(V)\cup r(V)$ consisting of open subsets of $\cG\z $. Since $\cG\z \subset V$ and $W=V\setminus \cG\z $, we have for any $F\subset \cG\z $
\[ V \cap\restrict{\cG}{ F} = [W \cap \restrict{G}{F}]  \sqcup F,\]
and so $\langle V \cap\restrict{\cG}{ U_i}\rangle=G_i\cup U_i$. Moreover, since $U$ contains $s(W)\cup r(W)$, we have  \[[s(W)\cup r(W)] \cap U'_0 = [s(W)\cup r(W)] \cap U_0.\]
Hence, $W\cap \restrict{G}{U_0'}=W\cap \restrict{G}{U_0}$, and thus we likewise have $\langle V \cap\restrict{\cG}{ U_0'}\rangle=G_0 \cup U_0'$. Since each $G_{i}$ and $U_i$ is precompact in $G$, it follows that $G_0 \cup U_0'$ and each $G_i\cup U_i$ is precompact in $\cG$. 
 Hence, $U_0',U_{1},\ldots,U_d$ satisfies \Cref{defn-dad}, and so $\dad (\cG)\leq d= \dad (G)$. 
 
\medskip 

Now, suppose $\cG$ has DAD $d'$. Since $G\z $ is open in $\cG\z $ and $G = \restrict{\cG}{G\z }$ we can apply \Cref{lem:dad of restriction'}~\ref{item:dad of restriction:open'} to conclude that $\dad (G)\leq d'=\dad (\cG)$.
\end{proof}

\section{
Twisted groupoid \texorpdfstring{$\Cst$}{C*}-algebras over principal groupoids with finite dynamic asymptotic dimension} \label{twistedgroupoidCalgebras}
In this section we prove our main theorem: the generalization of  \cite[Theorem~8.6]{GWY:2017:DAD} from $\Cst$-algebras of groupoids to twisted groupoid $\Cst$-algebras.

\begin{thm}\label{holy grail}
Let $G$ be a second-countable, \LCH, principal and  \etale\ groupoid and  let $(E, \iota, \pi)$ be a twist over $G$. Suppose that $G\z$ has topological covering dimension $N$ and that $G$ has dynamic asymptotic 
dimension $d$. Then the nuclear dimension of $\Cst_\red(E;G)$ is at most  $(N+1)(d+1)-1$.
\end{thm}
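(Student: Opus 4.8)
The plan is to reduce to the case where $G\z$ is compact and then to verify that $\Cst_\red(E;G)$ satisfies the hypotheses of \Cref{prop: colored loc subhom} with $d+1$ colours and building blocks of nuclear dimension at most $N$. For the reduction: if $G\z$ is non-compact I would replace $(E,\iota,\pi)$ over $G$ by the Alexandrov twist $(\widetilde E,\widetilde\iota,\widetilde\pi)$ over the Alexandrov groupoid $\cG$ of \Cref{tilde E' v2}. The adjoined point $\infty$ satisfies $r\inv(\infty)=s\inv(\infty)=\{\infty\}$, so $\cG$ is again principal, and it is also second-countable, \etale, and has compact unit space. By \Cref{twisted unitizations'}, $\Cst_\red(\widetilde E;\cG)$ is (isomorphic to) the minimal unitization of $\Cst_\red(E;G)$, so $\Cst_\red(E;G)$ sits inside it as an ideal and $\dim_\nuc\Cst_\red(E;G)\le\dim_\nuc\Cst_\red(\widetilde E;\cG)$ by \cite[Proposition~2.9]{WinterZacharias:nuclear}; moreover $\dim(\cG\z)=\dim(G\z)=N$ by \Cref{dimension of unitization} and $\dad(\cG)=\dad(G)=d$ by \Cref{DAD of Gtilde'}. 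So it suffices to prove the bound under the extra assumption that $G\z$ is compact, which I assume henceforth.

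Fix a finite self-adjoint $\Ff\subset C_c(E;G)$ and $\varepsilon>0$; since $C_c(E;G)$ is dense in $\Cst_\red(E;G)$ this suffices. As $G\z$ is compact and each $\pi(\supp f)$ is compact in $G$, I would choose an open, precompact, symmetric $V\subset G$ containing $G\z$ and all $\pi(\supp f)$ for $f\in\Ff$, enlarging $V$ so that it also contains the supports of the products needed to run the argument of \cite[Theorem~8.6]{GWY:2017:DAD}. Applying \Cref{defn-dad} and the remark following it to $V$ produces open sets $U_0,\dots,U_d\subset G\z$ covering $s(V)\cup r(V)=G\z$ such that each $H_i\coloneqq\langle V\cap\restrict{G}{U_i}\rangle$ is an open and precompact subgroupoid of $G$. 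By \Cref{lem-right-up}, $\pi\inv(H_i)$ is a twist over $H_i$ and there is an injective homomorphism $\iota_i\colon\Cst_\red(\pi\inv(H_i);H_i)\hookrightarrow\Cst_\red(E;G)$. Since $H_i$ is a precompact open subgroupoid of the principal \etale\ groupoid $G$ whose unit space is an open, hence at most $N$-dimensional, subset of $G\z$, \Cref{subgroupoid} gives that $\Cst_\red(\pi\inv(H_i);H_i)$ admits a recursive subhomogeneous decomposition of topological dimension at most $N$; as recursive subhomogeneous $\Cst$-algebras have nuclear dimension bounded by their topological dimension, $\dim_\nuc\Cst_\red(\pi\inv(H_i);H_i)\le N$. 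Now, following the scheme of \cite[Theorem~8.6]{GWY:2017:DAD} but keeping track of the central $\TT$-action, I would use a partition of unity $\{\phi_i\}_{i=0}^d$ on $G\z$ with $\supp\phi_i\subset U_i$ and $\sum_i\phi_i\equiv1$, together with the diagonal elements $f_{\phi_i}\in C_c(E;G)$ from \eqref{eq-diagonal}, to build \cpc\ maps $\psi_i\colon\Cst_\red(E;G)\to\Cst_\red(\pi\inv(H_i);H_i)$ (with $\psi_i(f)$ landing in $C_c(\pi\inv(H_i);H_i)$ for $f\in\Ff$ by a support estimate) such that $\bigl\|a-\sum_{i=0}^d\iota_i(\psi_i(a))\bigr\|<\varepsilon$ for all $a\in\Ff$. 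This exhibits precisely the colored local subhomogeneity hypothesis of \Cref{prop: colored loc subhom} — $d+1$ colours, building blocks of nuclear dimension at most $N$ — and that proposition yields $\dim_\nuc\Cst_\red(E;G)\le(N+1)(d+1)-1$. (Concretely: compose each $\psi_i$ with a system $\Cst_\red(\pi\inv(H_i);H_i)\to\bigoplus_{k=0}^{N}F_{i,k}\to\Cst_\red(\pi\inv(H_i);H_i)$ witnessing $\dim_\nuc\le N$, follow with $\iota_i$, and assemble the resulting $(N+1)(d+1)$ \cpc\ order-zero maps using \Cref{lemma: Same defns}.)

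The step I expect to be the main obstacle is the construction and verification of this colored \cpc\ approximation in the presence of the twist. On the one hand, the partition of unity, the maps $\psi_i$, and the inclusions $\iota_i$ must be arranged compatibly with the central $\TT$-action, and the crucial support estimate is delicate because $H_i=\langle V\cap\restrict{G}{U_i}\rangle$ is the subgroupoid \emph{generated} by $V\cap\restrict{G}{U_i}$ rather than that set itself, so an element of $\Ff$ has to be rewritten using products of elements whose ranges and sources all lie in $U_i$. On the other hand, the deepest analytic input — that these generated precompact subgroupoids have twisted $\Cst$-algebras admitting a controlled recursive subhomogeneous decomposition even though their primitive ideal spaces may fail to be Hausdorff — is exactly what \Cref{subgroupoid} is there to supply, using the descriptions of primitive ideal spaces of twisted groupoid $\Cst$-algebras and of spaces of regular representations recalled in the introduction.
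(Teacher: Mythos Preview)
Your overall architecture matches the paper's: reduce to compact $G\z$ via the Alexandrov twist, then verify the hypotheses of \Cref{prop: colored loc subhom} using the subgroupoids $H_i$ from finite $\dad$, with \Cref{subgroupoid} controlling their nuclear dimension. However, there is a genuine gap in the step you flag as the main obstacle, and it is not the difficulty you identify.

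A partition of unity $\{\phi_i\}$ merely subordinate to $\{U_i\}$ is not enough. If you set $b_i \coloneqq f_{h_i}$ with $\sum_i h_i^2 = 1$, then $b_i * f * b_i$ does have support in $\pi\inv(H_i)$ --- and your worry about ``generated by'' is misplaced: $(f_i * f * f_i)(e) \neq 0$ forces $\pi(e) \in \pi(K)\subset V$ and $r(e), s(e) \in U_i$, so $\pi(e) \in V \cap \restrict{G}{U_i} \subset H_i$ directly; no products are needed. The hard part is the norm estimate $\bigl\|f - \sum_i b_i * f * b_i\bigr\| < \varepsilon$, and this fails for an arbitrary subordinate partition because $f$ does not commute with the $b_i$. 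What makes it work in the paper (as in \cite[Theorem~8.6]{GWY:2017:DAD}) is that one does not apply the raw definition of $\dad$ but rather \cite[Proposition~7.1]{GWY:2017:DAD}, which upgrades the data to functions $h_i$ supported in $U_i$ with $\sum_i h_i^2 = 1$ \emph{and} the almost-invariance $\sup_{\gamma \in V} |h_i(s(\gamma)) - h_i(r(\gamma))| < \delta$. This last condition, fed into the twisted commutator estimate of \Cref{commutator-v2}\ref{commutator-v2-2}, gives $\|[f, h_i]\|_{\Cst_\red(E;G)} \leq \varepsilon/(d+1)$; then the identity $\sum_i f_i * f * f_i = f + \sum_i f_i * [f, h_i]$ yields the approximation. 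Without invoking both \cite[Proposition~7.1]{GWY:2017:DAD} and \Cref{commutator-v2}, your approximation step has no content.

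By contrast, the issues you highlight are minor: the $\TT$-equivariance is automatic once you use the diagonal elements of \eqref{eq-diagonal}, since $f_{h_i} * f * f_{h_i}(e) = h_i(r(e)) f(e) h_i(s(e))$ inherits the equivariance of $f$; and the analytic input from \Cref{subgroupoid} you already have available as a black box.
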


By taking  $E$ to be the Cartesian product $\TT\times G$ we recover \cite[Theorem~8.6]{GWY:2017:DAD}. Before giving the proof of \Cref{holy grail} (below after \Cref{commutator-v2}), we  need to establish some preliminary results. 
In particular, our proof of \Cref{holy grail} is inspired by arguments in the proofs of  \cite[Theorem~8.6]{GWY:2017:DAD} and \cite[Theorem~6.2]{Kerr:dim}: both  pick up on a colored version of local subhomogeneity  \cite[Definition 1.5]{ENST:2020:Decomp}. The following \namecref{prop: colored loc subhom}  makes this explicit. In the proof of \Cref{holy grail}  we apply this \namecref{prop: colored loc subhom} to subhomogeneous $\Cst$-algebras $B_i$ that are the $\Cst$-algebras of the subgroupoids arising from applying the definition of finite asymptotic dimension.

\begin{prop}\label{prop: colored loc subhom}
Let $A$ be a unital $\Cst$-algebra and let $X\subset A$ be such that $\operatorname{span}(X)$ is dense in $A$. Let $d,N\in \NN$. Suppose that for every finite $\Ff\subset X$ and every $\varepsilon>0$  there exist $\Cst$-subalgebras $B_0,\ldots ,B_d$ of $A$ with \[\max_{0\leq i\leq d}\dim_{\nuc}(B_i)\leq N\] and $b_0,\ldots ,b_d\in A$ of norm at most $1$ such that $b_{i}\mathcal{F}b_{i}^*\subset B_{i}$ for $0\leq i\leq d$ and such that
\[\max_{x\in \mathcal{F}}\left\|x-\sum_{i=0}^d b_ixb_i^*\right\|<\varepsilon. \]
  Then the nuclear dimension of $A$ is at most $(d+1)(N+1)-1$.
\end{prop}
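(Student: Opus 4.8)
The plan is to chain together three approximation steps to produce the data required by \Cref{lemma: Same defns}. Fix a finite $\Ff\subset X$ and $\varepsilon>0$; we may assume each element of $\Ff$ has norm at most $1$. First, apply the hypothesis to $\Ff$ and a parameter $\delta>0$ (to be chosen at the end) to obtain $\Cst$-subalgebras $B_0,\dots,B_d\subset A$ with $\dim_{\nuc}(B_i)\le N$ and contractions $b_0,\dots,b_d\in A$ with $b_i\Ff b_i^*\subset B_i$ and $\big\|x-\sum_{i=0}^d b_ixb_i^*\big\|<\delta$ for all $x\in\Ff$. Second, for each $i$, consider the finite set $\Ff_i\coloneqq\{b_ixb_i^*:x\in\Ff\}\subset B_i$; since $\dim_{\nuc}(B_i)\le N$, \Cref{lemma: Same defns} applied to $B_i$, $\Ff_i$, and $\delta$ yields finite-dimensional $\Cst$-algebras $F_{i,0},\dots,F_{i,N}$ and \cpc\ maps $B_i\xrightarrow{\psi_{i,k}}F_{i,k}\xrightarrow{\phi_{i,k}}B_i$ with each $\phi_{i,k}$ order zero and $\big\|\sum_{k=0}^N(\phi_{i,k}\circ\psi_{i,k})(b) - b\big\|<\delta$ for all $b\in\Ff_i$. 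Third, I would extend each $\psi_{i,k}\colon B_i\to F_{i,k}$ to a \cpc\ map $\widetilde\psi_{i,k}\colon A\to F_{i,k}$ using Arveson's extension theorem (\Cref{thm: arv for us}, with $D=F_{i,k}$ unital and $B=B_i\subset A$ both unital $\Cst$-algebras), and compose with the conjugation $a\mapsto b_iab_i^*$ to define $\Psi_{i,k}\colon A\to F_{i,k}$ by $\Psi_{i,k}(a)\coloneqq\widetilde\psi_{i,k}(b_iab_i^*)$. Dually, view $\phi_{i,k}\colon F_{i,k}\to B_i\subset A$ as landing in $A$; it remains \cpc\ and order zero.

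The key point is to reindex. Set $\Lambda\coloneqq\{(i,k):0\le i\le d,\ 0\le k\le N\}$, so $|\Lambda|=(d+1)(N+1)$, and for $(i,k)\in\Lambda$ put $F_{i,k}$ as the $(i,k)$-th finite-dimensional summand, $\psi^\Lambda_{i,k}\coloneqq\Psi_{i,k}\colon A\to F_{i,k}$, and $\phi^\Lambda_{i,k}\coloneqq\phi_{i,k}\colon F_{i,k}\to A$. Each $\psi^\Lambda_{i,k}$ is \cpc\ (composition of a conjugation by a contraction, which is \cpc\ since $\|b_i\|\le1$, with a \cpc\ map), each $\phi^\Lambda_{i,k}$ is \cpc\ and order zero. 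For $x\in\Ff$ we estimate
\[
\Big\|x-\sum_{(i,k)\in\Lambda}(\phi^\Lambda_{i,k}\circ\psi^\Lambda_{i,k})(x)\Big\|
\le \Big\|x-\sum_{i=0}^d b_ixb_i^*\Big\| + \sum_{i=0}^d\Big\|b_ixb_i^* - \sum_{k=0}^N\phi_{i,k}\big(\psi_{i,k}(b_ixb_i^*)\big)\Big\|,
\]
where in the second sum I used that $\widetilde\psi_{i,k}$ restricted to $B_i$ is $\psi_{i,k}$ and that $b_ixb_i^*\in B_i$, so $\Psi_{i,k}(x)=\psi_{i,k}(b_ixb_i^*)$, and that $b_ixb_i^*\in\Ff_i$. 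The first term is $<\delta$ and each of the $d+1$ summands in the second term is $<\delta$, so the total is $<(d+2)\delta$. Choosing $\delta<\varepsilon/(d+2)$ gives $<\varepsilon$, and by \Cref{lemma: Same defns} this shows $\dim_{\nuc}(A)\le(d+1)(N+1)-1$.

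I expect the main (though modest) obstacle to be bookkeeping rather than anything deep: one must be careful that the conjugation map $a\mapsto b_iab_i^*$ is genuinely \cpc\ (it is, as $b_i$ is a contraction, being a composition of the \cpc\ amplification-friendly map $a\mapsto b_iab_i^*$ — completely positive since $(b_ia b_i^*)$ preserves positivity at every matrix level and contractive since $\|b_i\|\le1$), that Arveson's theorem applies in the unital-subalgebra setting exactly as recalled after \Cref{thm: arv for us}, and that the telescoping estimate above correctly matches $\Psi_{i,k}$ on $A$ with $\psi_{i,k}$ on the relevant elements of $B_i$. No order-zero property is disturbed because we only pre-compose the $\psi$'s with a conjugation and leave the $\phi$'s untouched; order zero of $\phi_{i,k}\colon F_{i,k}\to B_i\hookrightarrow A$ is preserved under the inclusion $B_i\hookrightarrow A$.
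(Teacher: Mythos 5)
Your core construction is exactly the paper's: apply the hypothesis to get the $B_i$ and $b_i$, use finite nuclear dimension of each $B_i$ to get \cpc\ maps $B_i\xrightarrow{\psi_{i,k}}F_{i,k}\xrightarrow{\phi_{i,k}}B_i$ with $\phi_{i,k}$ order zero, extend $\psi_{i,k}$ to $A$ by Arveson (\Cref{thm: arv for us}), pre-compose with $a\mapsto b_iab_i^*$, reindex over the $(d+1)(N+1)$ pairs $(i,k)$, and telescope. That part is fine (one small slip: you assert that $B_i\subset A$ are ``both unital'', which is not part of the hypothesis; this is harmless only because the version of Arveson stated after \Cref{thm: arv for us} explicitly covers non-unital $B_i$, but as written your justification is off).

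The genuine gap is at the very first line: you fix a finite $\Ff\subset X$ and verify the approximation of \Cref{lemma: Same defns} only for elements of $X$, whereas both \Cref{def 2.1 WZ} and \Cref{lemma: Same defns} quantify over \emph{all} finite subsets of $A$. The hypothesis of the proposition only provides the $B_i$ and $b_i$ for finite subsets of $X$, so a reduction step is required, and this is precisely where the assumption that $\operatorname{span}(X)$ is dense in $A$ must be used --- it never appears in your argument, which is a warning sign. The reduction is not a one-line density remark, because the approximating channel $\sum_{(i,k)}\phi_{i,k}\circ\psi_{i,k}$ is not contractive (its norm is only bounded by $(d+1)(N+1)$), and passing from $X$ to $\operatorname{span}(X)$ by linearity inflates the error by the factor $L=\max_{c}\sum_j|\alpha_j^{(c)}|$ coming from the coefficients of the approximating linear combinations $a_c=\sum_j\alpha_j^{(c)}x_j^{(c)}$. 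In the paper's proof this is handled by first fixing an arbitrary finite $\tilde\Ff\subset A$, choosing $\eta=\varepsilon/(2+(d+1)(N+1))$, approximating each $c\in\tilde\Ff$ by such an $a_c$ within $\eta$, and then invoking the hypothesis and the nuclear-dimension approximations at the $L$-dependent tolerances $\eta/(2L)$ and $\eta/(2(d+1)L)$, so that the final estimate $\eta+(d+1)(N+1)\eta+\eta<\varepsilon$ closes. Your proof needs this extra layer (choose $\delta$ depending on $L$ and on $(d+1)(N+1)$, not just on $d$) before the conclusion $\dim_\nuc(A)\le(d+1)(N+1)-1$ is legitimately reached.
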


\begin{proof}
We will use the characterization of nuclear dimension from  \Cref{lemma: Same defns}. Since $\operatorname{span}(X)$ is dense in $A$, it suffices to show that the criteria from \Cref{lemma: Same defns} hold for any fixed finite subset $\Ff\subset X$ and $\varepsilon>0$.

By assumption, there exist $\Cst$-subalgebras $B_0,\ldots ,B_d\subset A$ with $\dim_{\nuc}(B_i)\leq N$ and $b_0,\ldots ,b_d\in A$ of norm at most $1$ such that 
$b_{i}\mathcal{F}b_{i}^*\subset B_{i}$ for $0\leq i\leq d$, and such that
\begin{equation}\label{eq:choice of b_i}
\left\|x-\sum_{i=0}^d b_ixb_i^*\right\|<\frac{\varepsilon}{2}\ \text{ for all } x\in \Ff. 
\end{equation}
Since each $B_i$ has nuclear dimension at most $ N $ and $b_i\Ff b_i^*$ is a finite subset of $B_i$, for   $0\leq k\leq  N $ there exist finite-dimensional $\Cst$-algebras  $F_{i,k}$ and \cpc\ maps $B_i\xrightarrow{\rho_{i,k}}F_{i,k}\xrightarrow{\phi_{i,k}} B_{i}$  with $\phi_{i,k}$ order zero such that for all $x\in \Ff$, 
\begin{equation}\label{eq:choice of psi and rho}
   \left\|\sum_{k=0}^N (\phi_{i,k}\circ \rho_{i,k})(b_i x b_i^*)-b_i x b_i^*\right\|<\frac{ \varepsilon}{2(d+1)}.
\end{equation}
Using Arveson's Extension Theorem, as stated in \Cref{thm: arv for us}, we extend $\rho_{i,k}\colon B_i\to F_{i,k}$ to \cpc\ maps $A\to F_{i,k}$, which we also denote by $\rho_{i,k}$. For $0\leq i\leq d$ and $0\leq k\leq N$, define $\psi_{i,k}\colon A\to F_{i,k}$ by $\psi_{i,k}(a)=\rho_{i,k}(b_iab_i^*)$ for all $a\in A$. Since $\|b_i\|\leq 1$ for each $i$, these are all \cpc\ maps. Thus, for $0\leq i\leq d$ and $0\leq k\leq  N $, we have \cpc\ maps 
\[A\xrightarrow{\psi_{i,k}} F_{i,k} \xrightarrow{\phi_{i,k}} B_{i}\subset A\]
with each $\phi_{i,k}$ order zero such that for each $x\in \Ff$,
\begin{align*}
    &
    \left\| \displaystyle{\sum_{i=0}^{d}}\displaystyle{\sum_{k=0}^{ N }}  (\phi_{i,k}\circ \psi_{i,k})(x)-x \right\|
    =\left\| \displaystyle{\sum_{i=0}^{d}}\displaystyle{\sum_{k=0}^{ N }}  (\phi_{i,k}\circ \rho_{i,k})(b_i x b_i^*)-x \right\|
    \\
    &\quad \leq \left\| \displaystyle{\sum_{i=0}^{d}}\displaystyle{\sum_{k=0}^{ N }} (\phi_{i,k}\circ \rho_{i,k})(b_i x b_i^*)-\displaystyle{\sum_{i=0}^{d}}b_i x b_i^*\right\| + \left\|\displaystyle{\sum_{i=0}^{d}} b_{i}xb_{i}^*-x\right\|
    \\
    &\quad<\left\| \displaystyle{\sum_{i=0}^{d}}\left(\displaystyle{\sum_{k=0}^{ N }} (\phi_{i,k}\circ \rho_{i,k})(b_i x b_i^*)-b_i x b_i^*\right)\right\| + \frac{\varepsilon}{2}
    &&  \text{(by \eqref{eq:choice of b_i})}
    \\
    &\quad\leq  (d+1) \max_{0\leq i\leq  N } \left\| \displaystyle{\sum_{k=0}^{ N }} (\phi_{i,k}\circ \rho_{i,k})(b_i x b_i^*)-b_i x b_i^* \right\| + \frac{\varepsilon}{2}
    &&
    \\
    &\quad< (d+1)\frac{\varepsilon}{2(d+1)} + \frac{\varepsilon}{2}= \varepsilon. 
    && \text{(by \eqref{eq:choice of psi and rho})}
\end{align*}

Thus, $\dim_\nuc(A)\leq (d+1)( N +1)-1$.
\end{proof}

Next, we examine the properties of the subgroupoids arising from the definition of dynamic asymptotic dimension: that they  are precompact groupoids translates to them having a finite open cover of bisections.

\begin{prop}\label{subgroupoid}   
Let $H$ be a second-countable, \LCH, principal  and  \etale\ groupoid and  let $(F, \iota, \pi)$ be a twist over $H$. Let $M\in \NN$ and suppose that $H$ has a finite open cover of $M$ bisections. 
\begin{enumerate}[label=\textup{(\arabic*)}]
\item\label{subgroupoid-1}  Then $H$ is amenable.
\item\label{subgroupoid-2} The primitive ideal space of $\Cst(F;H)$ is homeomorphic to the orbit space $H\z /H$ and each irreducible representation of $\Cst(F;H)$ has dimension at most $M$.
\item\label{subgroupoid-3} For $m\in\NN$, let $\Prim_m(\Cst(F;H))$  denote the set of primitive ideals of  irreducible representations of dimension $m$ and let $H_m\z =\{x\in H\z : |[x]|=m\}$. The homeomorphism of~\ref{subgroupoid-2} takes $H_m\z /H$ to $\Prim_m(\Cst(F;H))$.
\item\label{subgroupoid-4} Let $m\in \NN$. Then $H_m\z /H$ is locally compact and Hausdorff, and  $\dim(H_m\z )=\dim(H_m\z /H)$.
\item\label{subgroupoid-5} We have $\dim(H\z )=\max_{ 1\leq m \leq M}  \dim(H_m\z )$.
 \item\label{subgroupoid-6} 
 The decomposition rank of  $\Cst(F;H)$   is at most $\dim(H\z )$.
 \end{enumerate}
\end{prop}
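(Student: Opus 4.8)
The plan is to prove parts \ref{subgroupoid-1}--\ref{subgroupoid-6} in the order listed, each feeding the next, and to derive the decomposition-rank bound in \ref{subgroupoid-6} by combining the identification of $\Prim(\Cst(F;H))$ with the standard structure of recursive subhomogeneous $\Cst$-algebras. First, for \ref{subgroupoid-1} and \ref{subgroupoid-2}: since the source and range maps are injective on each bisection, a cover of $H$ by $M$ bisections forces $|Hx|\le M$ for every $x\in H\z$, and as $H$ is principal this means every orbit has at most $M$ points. A principal \etale\ groupoid with uniformly finite orbits is amenable -- one can average the counting measure over the orbits to produce an invariant mean -- so the full and reduced twisted groupoid $\Cst$-algebras coincide and I will simply write $\Cst(F;H)$. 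For \ref{subgroupoid-2} I would invoke the description of the primitive ideal space of a twisted groupoid $\Cst$-algebra over a principal groupoid from \cite{Clark-anHuef-RepTh}: since the isotropy is trivial, every irreducible representation of $\Cst(F;H)$ is unitarily equivalent to a regular representation $\pi^x$ as in \eqref{repn pi}, with $\ker\pi^x=\ker\pi^y$ precisely when $x,y$ lie in the same orbit, so $\Prim(\Cst(F;H))$ is homeomorphic to $H\z/H$; the dimension bound is then the computation recorded in the preliminaries, namely $L^2(Fx;Hx)\cong\ell^2(Hx)$, so $\dim\pi^x=|Hx|\le M$.

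Parts \ref{subgroupoid-3}--\ref{subgroupoid-5} are largely bookkeeping on top of this. For \ref{subgroupoid-3}, one has $|[x]|=|Hx|=\dim\pi^x$ for a principal groupoid, and since $\Cst(F;H)$ is type I the kernel of an irreducible representation determines its dimension; hence the homeomorphism of \ref{subgroupoid-2} restricts to $H_m\z/H\cong\Prim_m(\Cst(F;H))$. For \ref{subgroupoid-4}, \etale-ness makes $\{x:|xH|\ge m\}$ open (disjoint bisections through the points of $xH$ witness $m$ distinct points in all nearby $r$-fibres), so $H_m\z$ is locally closed and hence locally compact, Hausdorff and second countable; it is $H$-invariant, and on the (principal, \etale) restriction $H|_{H_m\z}$ every orbit has size exactly $m$, which makes the orbit map $q\colon H_m\z\to H_m\z/H$ a surjective local homeomorphism -- on a small enough neighbourhood of a unit $x$, the finitely many partial homeomorphisms of $H_m\z$ induced by bisections through the non-unit arrows out of $x$ carry that neighbourhood off itself, so $q$ is injective there. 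That $H_m\z/H$ is Hausdorff -- which fails for the unstratified $H\z/H$ in general -- I would extract from the Muhly--Williams analysis of the space of regular representations of a twist over a principal groupoid \cite{Muhly-Williams-CtsTr2}; then $\dim(H_m\z)=\dim(H_m\z/H)$ follows because $q$ is a surjective local homeomorphism of second-countable locally compact Hausdorff (hence metrizable) spaces, via closed shrinkings of the associated open covers and the countable sum theorem. For \ref{subgroupoid-5}, each $H_m\z$ is $F_{\sigma}$ in the metrizable space $H\z$ (it is locally closed, and open sets in metrizable spaces are $F_{\sigma}$) and $H\z=\bigcup_{m=1}^M H_m\z$, so \ref{subgroupoid-5} is exactly monotonicity of covering dimension on subspaces \cite[Theorem~1.8.3]{Coornaert} together with the countable sum theorem \cite[Proposition~5.3, Chapter~3]{Pears:dim}.

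Finally, for \ref{subgroupoid-6}: $\Cst(F;H)$ is separable (as $H$, hence $F$ and $C_c(F;H)$, is second countable) and subhomogeneous by \ref{subgroupoid-2}, so it admits a recursive subhomogeneous decomposition in the sense of \cite{Phillips:recursive} whose $m$-homogeneous subquotient has spectrum $H_m\z/H$ by \ref{subgroupoid-3}, of covering dimension at most $\dim(H\z)$ by \ref{subgroupoid-4} and \ref{subgroupoid-5}; the standard bound on the decomposition rank of a recursive subhomogeneous $\Cst$-algebra in terms of the covering dimensions of its base spaces then yields $\dr(\Cst(F;H))\le\dim(H\z)$.

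The main obstacle I anticipate is \ref{subgroupoid-4}: because $\Prim(\Cst(F;H))=H\z/H$ need not be Hausdorff, one really has to stratify by orbit size and argue, via \cite{Clark-anHuef-RepTh} and \cite{Muhly-Williams-CtsTr2}, that each stratum $H_m\z/H$ is a Hausdorff finite-to-one quotient whose covering dimension equals that of $H_m\z$; setting up the local-homeomorphism/dimension argument carefully in the second-countable, locally compact, Hausdorff setting is the accompanying technical point, and verifying amenability in \ref{subgroupoid-1} (so that $\Cst=\Cst_\red$) is the other place one must be slightly careful.
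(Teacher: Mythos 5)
Your outline of (1)–(3) and (5) matches the paper's: cover by $M$ bisections gives $|Hx|\le M$, the Clark--an Huef results identify $\Prim(\Cst(F;H))$ with $H\z/H$ via $[x]\mapsto\ker\pi^x$, the regular representations live on $\ell^2(Hx)$, and (5) is monotonicity plus a sum theorem. Where you genuinely diverge is in (4) and (6), and there the proposal as written has gaps. For (4) the paper never claims the orbit map is a local homeomorphism: it gets local compactness and Hausdorffness of each stratum from Dixmier (3.6.3/3.6.4, applied to the liminal algebra $\Cst(F;H)$), runs a downward induction on the largest occurring orbit size using the Clark--an Huef exact sequence to peel off the open top stratum, and then gets $\dim(H_m\z)=\dim(H_m\z/H)$ from Pears' theorem on open continuous surjections with constant finite fibre cardinality between weakly paracompact normal spaces. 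Your local-homeomorphism route can be made to work, but the step ``the partial homeomorphisms $\tau_j$ through the arrows out of $x$ carry a small neighbourhood off itself, so $q$ is injective there'' is a non sequitur as stated: an arrow identifying two points $y\neq z$ near $x$ need not lie in any bisection through an arrow out of $x$. What saves you is exactly the ``orbit size exactly $m$'' count: shrinking the neighbourhood so that $y,\tau_2(y),\dots,\tau_m(y)$ are pairwise distinct forces the orbit of $y$ to be precisely this set, and only then does $\tau_j(V)\cap V=\emptyset$ give injectivity of $q|_V$. That exhaustion step must be spelled out. Similarly, Hausdorffness of $H_m\z/H$ is not something you can simply ``extract from'' Muhly--Williams, which the paper uses only for the unitary equivalence $\pi^x\simeq M^x$; either prove it from your local triviality (saturations of small open sets separate distinct orbits) or take the paper's route through $\Prim_m$.

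Two smaller points. Your amenability argument in (1) — averaging counting measures over orbits — ignores that orbit cardinality is only lower semicontinuous, so the resulting invariant density is Borel rather than continuous; as stated this does not give topological amenability. The paper's fix is cheap: principality makes the isotropy trivially abelian, finiteness of orbits makes them closed, and then van Wyk--Williams applies. Finally, in (6) you appeal to Phillips' recursive subhomogeneous machinery, but Phillips' decompositions are for unital algebras built over compact base spaces, and $\Cst(F;H)$ need not be unital here (in the application $H\z$ is merely precompact); the paper instead quotes Winter's theorem that the decomposition rank of a separable subhomogeneous $\Cst$-algebra is bounded by $\max_m\dim\Prim_m$, which together with (4) and (5) gives $\dr(\Cst(F;H))\le\dim(H\z)$ directly and is the citation you should use (the RSH statement appears in the paper only as a remark).
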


\begin{proof} 
Let $x\in H\z $. Then $Hx$ meets each bisection at most once. Thus, $|Hx|\leq M$ for all $x\in H\z $. Hence, each orbit $[x]\coloneqq r(Hx)$ has at most $M$ elements. In particular, $[x]$ is closed in $H\z $ whence the orbit space $H\z /H$ is T$_{1}$. Since $H$ is principal, its stability subgroups are trivially abelian, and it follows from 
\cite[Lemma~5.4]{vanWyk-Williams}
that $H$ is amenable,  giving~\ref{subgroupoid-1}.

\medskip

Since $H$ is principal, it further follows from \cite[Proposition~3.3]{Clark-anHuef-RepTh} that $\Cst(F;H)$ is liminal. In particular, the spectrum and the primitive ideal space of $\Cst(F;H)$ are homeomorphic. 
By \cite[Theorem~3.4]{Clark-anHuef-RepTh}, the orbit space $H\z /H$ is homeomorphic to $\Prim(\Cst(F;H))$.  The homeomorphism is given by $[x]\mapsto\ker \pi^x$ where $\pi^x$ is the representation described at \eqref{repn pi}. But in the proof of \cite[Lemma~3.2]{Muhly-Williams-CtsTr2}, which assumes that $H$ is principal, Muhly and Williams show that $\pi^x$ is unitarily equivalent to a representation on~$\ell^2(Hx)$   which they denote by~$M^x$. Since $|Hx|\leq M$ it now follows that each irreducible representation of $\Cst(F;H)$ has dimension at most $M$ and  that the homeomorphism above takes $H_m\z /H$ to $\Prim_m(\Cst(F;H))$. This gives~\ref{subgroupoid-2} and~\ref{subgroupoid-3}.

\medskip

Let $m\in\NN$. By \cite[Proposition~3.6.4 and its proof]{Dixmier}, $\Prim_m(\Cst(F;H))$ is \LCH\ and hence  $H\z _m/H$ is \LCH\ by~\ref{subgroupoid-3}. 

If $\Prim_m(\Cst(F;H))=\emptyset$, then $H_m\z =H_m\z /H=\emptyset$ by~\ref{subgroupoid-3}, and their dimensions are both~$-1$. 
Suppose that there exists a largest $N_{1}$ in $\{1,\dots, M\}$ such that  $\Prim_{N_{1}}(\Cst(F;H))$ is non-empty (otherwise we are done with~\ref{subgroupoid-4}). By~\ref{subgroupoid-2}, $\Prim(\Cst(F;H))$ consists of exactly all irreducible representations of dimension at most $N_{1}$. Thus,  $\Prim_{N_{1}}(\Cst(F;H))$ is open in $\Prim(\Cst(F;H))$ by \cite[Proposition~3.6.3]{Dixmier}. Using~\ref{subgroupoid-3}, it follows that  $H_{N_{1}}\z /H$ is open in $H\z /H$.

Let $q\colon H\z \to H\z /H$ be the quotient map; it is open  by \cite[Proposition~2.12]{Williams:groupoid}. Write $U\coloneqq H_{N_{1}}\z $ and note that $U=q\inv (q(U))$. Thus, $U$ and $W\coloneqq H\z \setminus U$ are respectively, open and closed, and invariant subsets of $H\z$.  By \Cref{restriction of twist non-etale}, $F|_U$ is a twist over  $H|_U$ and $F|_W$ is a twist over  $H|_W$. By \cite[Lemma~3.1]{Clark-anHuef-RepTh} we get the exact sequence
\[
0\to \Cst( F|_U;  H|_U)\to \Cst(F;H)\to \Cst(F|_W;H|_W)\to 0.
\]
The ideal $\Cst(F|_U;H|_U)$ is homogeneous and~\ref{subgroupoid-2} implies it has primitive ideal space homeomorphic to $H_{N_{1}}\z /H=U/H$. Similarly, the quotient $\Cst(F|_W, H|_W)$ is subhomogeneous such that irreducible representations have dimension at most $N_{1}-1$  and has primitive ideal space homeomorphic to $W/H$.

Now consider the restriction $q_{N_{1}}\colon H_{N_{1}}\z =U\to U/H$ of the quotient map  $q$ to the open subset $U$. We now verify the hypotheses of \cite[Chapter~9, Proposition~2.16]{Pears:dim}. First, since $q$ is open and $U$ is open, so is $q_{N_{1}}$. Thus, $q_{N_{1}}$ is a continuous, open surjection.  Second, since $U$ and $U/H$ are \LCH\ and second-countable, they are both weakly paracompact and normal. Finally, $|q_{N_{1}}\inv ([y])|=N_{1}$ for all $[y]\in U/H$. Hence, it follows from \cite[Chapter~9, Proposition~2.16]{Pears:dim} that $\dim(H_{N_{1}}\z )=\dim(H_{N_{1}}\z /H)$.

If $\Prim_{m}(\Cst(F|_W, H|_W))=\emptyset$ for all $1\leq  m < N_{1}$, then we are done because the claim is trivial. Otherwise, let $N_{2}$ be the largest in $\{1, \dots, N_{1}-1\}$ such that $\Prim_{N_{2}}(\Cst(F|_W, H|_W))$ is non-empty. Again $\Prim_{N_{2}}(\Cst(F|_W;H|_W))$ is open in $\Prim(\Cst(F|_W;H|_W))$ by \cite[Proposition~3.6.3]{Dixmier}. Since $W$ is closed, $H|_W$ has the same topological properties as $H$. Also, for each $1\le m< N_{1}$, we have $H\z_{m}=(H|_{W})\z_{m}$. We may therefore replace $H$ by $H|_W$ in the above argument to get that $\dim(H_{N_{2}}\z )=\dim(H_{N_{2}}\z /H)$. Continue to get~\ref{subgroupoid-4}.
\medskip

Let $1\leq m\leq M$ and let $\Prim_{\leq m}(\Cst (F;H))$ be the space of primitive ideals of irreducible representations of dimension at most $m$ and set $H_{\leq m}\z =\{x\in H\z  : |[x]|\leq m\}$. From \cite[Proposition~3.6.3]{Dixmier} and~\ref{subgroupoid-2}, it follows that $H_{\leq m}\z /H$ is closed in $H\z /H$ and that $H_{m}\z /H$ is open in $H_{\leq m}\z /H$.  Thus, $H_{\leq m}\z =q\inv (H_{\leq m}\z /H)$ is closed in $H\z $ and  $H_m\z =q\inv (H_{m}\z /H)$ is open in $H_{\leq m}\z $. Since each $H_{\leq m}\z $ is a closed subspace of a second-countable, locally compact and Hausdorff space, it is normal. Now the normal space $H_{\leq m}\z $ is the disjoint union of the closed subset $H_{\leq m-1}\z $ and the open subset $H_m\z $, and \cite[Chapter~3, Corollary~5.8]{Pears:dim} gives
\[\dim(H_{\leq m}\z )\leq \max\{ \dim(H_m\z ),\ \dim(H_{\leq m-1}\z )\}.\]
It follows that
\begin{align*}
    \dim(H\z )=\dim(H_{\leq  M}\z )&\leq \max\{ \dim(H_M\z ),\ \dim(H_{\leq M-1}\z )\}\\
    & \leq\max \{ \dim(H_M\z ),\ \dim(H_{M-1}\z ),\ \dim(H_{\leq M-2}\z )\}\\
    &\ \vdots\\
    &\leq \max_{ 1\leq m \leq M} \dim(H_m\z ).
\end{align*}
To see that $\dim(H\z )=\max_{ 1\leq m \leq M}  \dim(H_m\z )$, we note that $H\z $ is metrizable, so that $\dim(H_m\z )\leq \dim(H\z )$ for each $m$ by \cite[Theorem~1.8.3]{Coornaert}. This gives \ref{subgroupoid-5}.

\medskip

For \ref{subgroupoid-6}, we apply \cite[Theorem~1.6]{Winter:decomposition-rank} to get that 
\begin{align*}
\dr(\Cst(F;H))&\leq \max_{ 1\leq m \leq M}\dim(\Prim_m(\Cst(F;H)))=\max_{1\leq m \leq M}\dim(H_m\z /H)\\
&= \max_{1\leq m \leq M}\dim(H_m\z )=\dim(H\z )
\end{align*}
using \ref{subgroupoid-4} and \ref{subgroupoid-5}. 
\end{proof}

\begin{remark}\label{rem-Phillips}
In the language of \cite[Theorem~2.16]{Phillips:recursive}
Proposition~\ref{subgroupoid} implies that $\Cst(F;H)$ has a recursive subhomogeneous decomposition with maximum matrix size at most $M$ and topological dimension at most $\dim(H\z )$.
\end{remark}

We need a version of \cite[Lemma~8.20]{GWY:2017:DAD}. Let $K$ be a compact subset of $E$.  We write $C_{K}(E;G)\coloneqq\{f\in C_c(E;G)\colon\supp f\subset K \}$. For $f\in C_c(E;G)$ and $h\in C_c(G\z )$ we define a ``commutator'' $[f,h]$ by 
\begin{align}
[f,h](e )=f(e )\,\bigl(h(s_E(e ))-h(r_E(e ))\bigr)\label{eq: comm}
\end{align}
for $e \in E.$ Notice that $[f,h]\in C_c(E;G)$.

\begin{lemma}\label{commutator-v2}
Let $G$ be a second-countable, \LCH\  and  \etale\ groupoid, and  let $(E, \iota, \pi)$ be a twist over $G$.
\begin{enumerate}[label=\textup{(\arabic*)}]
\item\label{commutator-v2-1}  Let $f\in C_c(E;G)$ and $g\in C_c(E)$ such that the point-wise product $f\cdot g$ is an element of $C_c(E;G)$ and $\pi(\supp g)$ is a bisection in $G$. Then
\[
\|f\cdot g\|_{\Cst_\red(E;G)}\leq\|f\|_{\Cst_\red(E;G)}\|g\|_\infty.
\]
\item\label{commutator-v2-2}  Let $\epsilon>0$, let $K$ be  a compact subset of $E$ and let $V$ be an open,  precompact neighborhood of $\pi(K)$ in $G$. There exists $\delta>0$ such that if $  h\in C_c(G\z )$ satisfies $\sup_{\gamma\in V}|  h(s_G(\gamma))-  h(r_G(\gamma))|<\delta$, then for any $f\in C_{K}(E;G)$,
\[
\|[f,  h]\|_{\Cst_\red(E;G)}\leq \epsilon\|f\|_{\Cst_\red(E;G)}.
\]
\end{enumerate}
\end{lemma}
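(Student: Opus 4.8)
The plan is to prove \ref{commutator-v2-1} by pushing each reduced‑norm estimate onto the matrix of $\pi^x(f\cdot g)$ in a convenient orthonormal basis, and then to deduce \ref{commutator-v2-2} from \ref{commutator-v2-1} by a finite partition of unity; this follows the strategy of \cite[Lemma~8.20]{GWY:2017:DAD}, the only new point being the $\TT$-valued cocycle coming from a section of $\pi$.

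For \ref{commutator-v2-1}, I would fix $x\in G\z$ and a (not necessarily continuous) section $\mathfrak{s}\colon G\to E$ of $\pi$. The identity $\|\xi\|^2=\sum_{\gamma\in Gx}|\xi(\mathfrak{s}(\gamma))|^2$ recorded earlier shows that $\xi\mapsto(\xi(\mathfrak{s}(\gamma)))_{\gamma\in Gx}$ is a unitary $L^2(Ex;Gx)\cong\ell^2(Gx)$; write $\{v_\gamma\}_{\gamma\in Gx}$ for the resulting orthonormal basis of $L^2(Ex;Gx)$ (each $v_\gamma$ is supported on the clopen $\TT$-fibre over $\gamma$, hence lies in $C_c(Ex;Gx)$). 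Using the simplified convolution formula for $C_c(E;G)$, I would compute that for $\phi\in C_c(E;G)$ the matrix of $\pi^x(\phi)$ in this basis is $\langle\pi^x(\phi)v_\delta,v_\beta\rangle=\phi\bigl(\mathfrak{s}(\beta\delta\inv)\bigr)z_{\beta,\delta}$ for $\beta,\delta\in Gx$, where $z_{\beta,\delta}\in\TT$ is defined by $\mathfrak{s}(\beta\delta\inv)\inv\mathfrak{s}(\beta)=z_{\beta,\delta}\cdot\mathfrak{s}(\delta)$. Applying this to $\phi=f\cdot g$ (which lies in $C_c(E;G)$ by hypothesis): each entry has modulus $|f(\mathfrak{s}(\beta\delta\inv))|\,|g(\mathfrak{s}(\beta\delta\inv))|\le\|f\|_\infty\|g\|_\infty$, and it vanishes unless $\mathfrak{s}(\beta\delta\inv)\in\supp g$, i.e.\ unless $\beta\delta\inv\in B\coloneqq\pi(\supp g)$. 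As $B$ is a bisection, fixing $\beta$ leaves at most one $\delta$ with $\beta\delta\inv\in B$ and fixing $\delta$ at most one such $\beta$, so $\pi^x(f\cdot g)$ has at most one nonzero entry in each row and in each column. Then $\pi^x(f\cdot g)^*\pi^x(f\cdot g)$ is diagonal, giving $\|\pi^x(f\cdot g)\|\le\|f\|_\infty\|g\|_\infty$; taking the supremum over $x$ yields $\|f\cdot g\|_{\Cst_\red(E;G)}\le\|f\|_\infty\|g\|_\infty$. It remains to note $\|f\|_\infty\le\|f\|_{\Cst_\red(E;G)}$, which one sees by evaluating $\pi^{s(\gamma)}(f)$ on $v_{s(\gamma)}$ to get $\|\pi^{s(\gamma)}(f)\|\ge|f(\mathfrak{s}(\gamma))|$ for every $\gamma\in G$.

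For \ref{commutator-v2-2}, I would use that $\pi(K)$ is compact and contained in the open set $V$, and that $G$ is \etale, to cover $\pi(K)$ by finitely many open bisections $B_1,\dots,B_n\subset V$; then pick $\psi_i\in C_c(B_i)$ with $0\le\psi_i\le1$ and $\sum_{i=1}^n\psi_i\equiv1$ on $\pi(K)$, and set $\delta\coloneqq\epsilon/n$. Given $h\in C_c(G\z)$ with $\sup_{\gamma\in V}|h(s_G(\gamma))-h(r_G(\gamma))|<\delta$ and $f\in C_K(E;G)$, put $g_i\coloneqq(\psi_i\circ\pi)\cdot\bigl((h\circ s_E)-(h\circ r_E)\bigr)$. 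Since $\psi_i\circ\pi$, $h\circ s_E$ and $h\circ r_E$ are all $\TT$-invariant, $g_i\in C_c(E)$ and $f\cdot g_i\in C_c(E;G)$, with $\pi(\supp g_i)\subset B_i$ a bisection; and because $\supp f\subset K$ and $\sum_i\psi_i\circ\pi\equiv1$ on $\pi\inv(\pi(K))$, the definition \eqref{eq: comm} gives $[f,h]=\sum_{i=1}^n f\cdot g_i$. Whenever $\psi_i(\pi(e))\neq0$ we have $\pi(e)\in B_i\subset V$, whence $|g_i(e)|\le|h(s_G(\pi(e)))-h(r_G(\pi(e)))|<\delta$, so $\|g_i\|_\infty\le\delta$. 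Applying \ref{commutator-v2-1} to each $f\cdot g_i$ then yields
\[
\|[f,h]\|_{\Cst_\red(E;G)}\le\sum_{i=1}^n\|f\cdot g_i\|_{\Cst_\red(E;G)}\le n\delta\,\|f\|_{\Cst_\red(E;G)}=\epsilon\,\|f\|_{\Cst_\red(E;G)}.
\]

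The main obstacle is \ref{commutator-v2-1}: I need to check that routing the reduced norm through the non-canonical section $\mathfrak{s}$ and the attendant cocycle $z_{\beta,\delta}$ does not spoil the two facts that actually drive the estimate — that the entries of the matrix of $\pi^x(f\cdot g)$ have modulus at most $\|f\|_\infty\|g\|_\infty$, and that the bisection hypothesis forces at most one nonzero entry per row and per column. Once \ref{commutator-v2-1} is in place, \ref{commutator-v2-2} is the same partition-of-unity argument as in the untwisted case.
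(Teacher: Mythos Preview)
Your proposal is correct and follows essentially the same route as the paper's proof. For \ref{commutator-v2-1} you phrase the computation in terms of matrix entries of $\pi^x(f\cdot g)$ relative to the orthonormal basis $\{v_\gamma\}$ while the paper works directly with $\|\pi^x(f\cdot g)\xi\|$, but both arguments reduce to the same two facts (entries bounded by $\|f\|_\infty\|g\|_\infty$, and the bisection hypothesis forcing at most one nonzero term per row and column); for \ref{commutator-v2-2} your partition of unity $\{\psi_i\}$ subordinate to bisections in $V$ plays exactly the role of the paper's decomposition $b=\sum_i b_i$ of a Urysohn bump function, leading to the same $\delta=\epsilon/n$.
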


\begin{proof}
Fix a unit $x\in G\z $ and let $\pi^x$ be the representation of $C_c(E;G)$ on $L^2(Ex;Gx)$ defined at \eqref{repn pi}. By definition of the norm on $\Cst_\red(E;G)$, it suffices to show that $\|\pi^x(f\cdot g)\|\leq\|f\|_{\Cst_\red(E;G)}\| g \|_\infty$ to conclude our first claim. Let  $\mathfrak{s}\colon G\to E$ denote a fixed  but arbitrary, not necessarily continuous, section of $\pi$.  We compute for $\xi\in C_c(E;G)\cap L^2(Ex;Gx)$ of norm at most one:
\[
\|\pi^x(f\cdot g )\xi\|^2=\|(f\cdot g )*\xi\|^2=\sum_{\gamma\in Gx}|((f\cdot g )*\xi)(\mathfrak{s}(\gamma))|^2.
\]
For each $\gamma\in Gx$ we have
\begin{align*}
    ((f\cdot g )*\xi)(\mathfrak{s}(\gamma))
    &=\sum_{\alpha\in r(\gamma)G}  (f\cdot g )(\mathfrak{s}(\alpha))\,\xi\bigl(\mathfrak{s}(\alpha)\inv \mathfrak{s}(\gamma)\bigr)\\
    &=\int_{E}(f\cdot g )(e)\,\xi\bigl(e\inv \mathfrak{s}(\gamma)\bigr)\, \dd\sigma^{r(\gamma)}(e)\\
    \intertext{by our choice of Haar system $\sigma$ on $E$. The change of variable $d=e\inv \mathfrak{s}(\gamma)$ yields}
    ((f\cdot g )*\xi)(\mathfrak{s}(\gamma))&=\int_{E}(f\cdot g )\bigl(\mathfrak{s}(\gamma)d\inv \bigr)\,\xi(d)\, \dd{\sigma}_{x}(d)\\
    &=\sum_{\alpha\in G x}(f\cdot g )\bigl(\mathfrak{s}(\gamma)\mathfrak{s}(\alpha)\inv \bigr)\,\xi(\mathfrak{s}(\alpha)).
\end{align*}
Suppose that $\mathfrak{s}(\gamma)\mathfrak{s}(\alpha)\inv \in\supp g $. Then $\alpha\in\pi(\supp g )\inv \gamma$.  Since  $\pi(\supp g )\inv $ is a bisection, it contains at most one element with source $r_G(\gamma)$. So for each $\gamma\in Gx$,  there is at most one  $\alpha_\gamma$ such that $\mathfrak{s}(\gamma)\mathfrak{s}(\alpha_\gamma)\inv \in\supp g$. Thus, 
\begin{align*}
    \|\pi^x(f\cdot  g )\xi\|^2
    &=\sum_{\stackrel{\gamma\in Gx}{\exists\alpha_\gamma\in \pi(\supp  g )\inv \gamma}}\bigl|
    (f\cdot  g )\bigl(\mathfrak{s}(\gamma)\mathfrak{s}(\alpha_\gamma)\inv \bigr)
    \,\xi(\mathfrak{s}(\alpha_\gamma))\bigr|^2\\
    &\leq \|f\|^2_\infty\|  g \|^2_\infty\sum_{\stackrel{\gamma\in Gx}{\exists\alpha_\gamma\in \pi(\supp  g )\inv \gamma}}|\xi(\mathfrak{s}(\alpha_\gamma))|^2.
    \end{align*}
We claim that $\alpha_{\gamma}$ for $\gamma$ ranging over $Gx$ is a non-repetitive subset of $Gx$. For if $\alpha_\gamma=\alpha_\delta$, then they are both in $\pi(\supp  g )\inv \gamma$ and in $\pi(\supp  g )\inv \delta$. Say $\alpha_\gamma=\xi\gamma=\eta\delta$ where $\xi,\eta\in \pi(\supp  g )\inv $. Since $\xi, \eta$ have the same range and $\pi(\supp  g )\inv $ is a bisection, we get that $\xi=\eta$, and then $\gamma=\delta$ as well, giving the claim. It follows that
   \[
    \sum_{\stackrel{\gamma\in Gx}{\exists\alpha_\gamma\in \pi(\supp  g )\inv \gamma}}|\xi(\mathfrak{s}(\alpha_\gamma))|^2 \le \sum_{\beta\in Gx}|\xi(\mathfrak{s}(\beta))|^2 = \|\xi\|^2.\]   
Thus,
\[ \|\pi^x(f\cdot  g )\xi\|^2 \leq \|f\|^2_{\Cst_\red(E;G)} \|g\|^2_\infty,\]
where we have used that $\|f\|_\infty\leq \|f\|_{\Cst_\red(E;G)}$ by \cite[Proposition~2.8]{BFRP:Twisted}.
This gives \ref{commutator-v2-1}.

\medskip

For \ref{commutator-v2-2}, 
let $L$ be the compact closure of $V$ in $G$. By Urysohn's lemma, there exists a  continuous function $b\colon G\to [0,1]$ with compact support in $L$ such that $b$ is $1$ on $\pi(K)$ and is $0$ on $G\setminus V$. 

Fix $  h\in C_c(G\z )$ and $f\in C_K(E;G)$. We set $  g \coloneqq (b\circ \pi)\cdot(  h\circ s_E-  h\circ r_E)$; then since $\pi\inv(L)$ is compact by Lemma~\ref{lem:pi proper},  $  g \in C_c(E)$ with compact support in $\pi\inv (L)$ and
\begin{equation}\label{eq norm of psi}
\|g\|_\infty\leq \sup_{\gamma\in V}|(  h\circ s_G-  h\circ r_G)(\gamma)|.
\end{equation}
 Since $  g (t\cdot e)=  g ( e)$ for all $e\in E$ and $t\in \TT$, we have $f\cdot  g \in C_c(E;G)$. Since the support of $f$ is contained in $K$ by assumption,  we have
\begin{align*}
[f,  h]( e)&=f( e)(  h(s_E( e))-  h(r_E( e))\\
&=\begin{cases}
f( e)b(\pi( e))(  h(s_E( e))-  h(r_E( e)) &\text{if $ e\in K$}\\
0&\text{else}
\end{cases}\\
&=\begin{cases}
f( e)  g ( e) &\text{if $ e\in K$}\\
0&\text{else}
\end{cases}\\
&=(f\cdot  g )( e).
\end{align*}
Since the support of $b$ is compact and  $G$ is \etale, there exist $n\in \NN\setminus\{0\}$ and $b_{1}, \dots, b_n\in C_c(G)$ such that the support of each $b_i$ is a bisection,   $b=\sum_{i=1}^nb_i$ and $\|b_i\|_\infty\leq \|b\|_\infty$. Then $g =\sum_{i=1}^n g_i$ where $g_i=(b_i\circ \pi)\cdot(  h\circ s_E-  h\circ r_E)$ and $\pi(\supp  g_i)$ is a bisection, and $\|  g_i\|_\infty\leq\|  g \|_\infty$.
Now, our above computation yields
\begin{align*}
\|[f,  h]\|_{\Cst_\red(E;G)}&=\|f\cdot  g \|_{\Cst_\red(E;G)}\\
&\leq \sum_{i=1}^n \|f\cdot  g_{i}\|_{\Cst_\red(E;G)}\\
&\leq \sum_{i=1}^n \|f\|_{\Cst_\red(E;G)}\|  g_i\|_\infty && \text{(using item \ref{commutator-v2-1})}\\
&\leq n\|f\|_{\Cst_\red(E;G)}\|  g \|_\infty\\
&\leq n\|f\|_{\Cst_\red(E;G)} \sup_{\gamma\in V}|(  h\circ s_G-  h\circ r_G)(\gamma)|
\end{align*}
using \eqref{eq norm of psi}. Since $b$ and hence $n$ depend only on $K$ and $V$, and  are independent of $  h$ and $f$, we conclude that $\delta=\epsilon/n$ works, giving~\ref{commutator-v2-2}. 
\end{proof}

\begin{remark}
By taking $E$ to be the Cartesian product $\TT\times G$, so that $\Cst_\red(G)\cong\Cst_\red(E;G)$,  \Cref{commutator-v2} makes it clear that \cite[Lemmas~8.19 and 8.20]{GWY:2017:DAD} do not require $G$ to be principal as assumed in  \cite{GWY:2017:DAD}.
\end{remark}

\begin{proof}[Proof of Theorem~\ref{holy grail}]  
First assume that the unit space $G\z $ is compact. Let $\Ff$ be a finite subset of $C_c(E;G)\setminus\{0\}$ and let $\varepsilon>0$. There exists a compact subset $K$ of $E$ such that $f\in \Ff$ implies $\supp f\subset K$. Since both $K\inv$ and $\pi\inv(\pi(K))$ are  compact sets (see \Cref{lem:pi proper}), we may assume that $K=K\inv $ and that $K=\pi\inv (\pi(K))$. Since $G\z =E\z $ is compact and open in~$E$, we may also assume that $G\z \subset K$ and hence that the characteristic function of $G\z$ is in $\Ff$, i.e., that $1\in\Ff$.

Let $V\subset G$ be an open and  precompact neighborhood of $\pi(K)$, and let $\delta$ be as in    Lemma~\ref{commutator-v2}~\ref{commutator-v2-2} for $\frac{\varepsilon}{(d+1)\max_{f\in \mathcal{F}}\|f\|_{\Cst_{\red}(E;G)}}$, $K$ and $V$.   Since  $G$ has  dynamic asymptotic dimension $d$, applying \cite[Proposition~7.1]{GWY:2017:DAD} to $\delta$ and the precompact, open subset $V$ of $G$ gives
    \begin{enumerate}[label=\textup{(\arabic*)}]
    \item\label{consequence1} open sets $U_0, \dots, U_d$ covering $G\z = r_G(V)\cup s_G(V)$ such that the subgroupoids $H_i$ generated by $\{\gamma\in V: s_G(\gamma), r_G(\gamma)\in U_i\}$ are open and  precompact in $G$ for $0\leq i\leq d$; 
    \item\label{consequence3}   continuous and   compactly supported    functions $ h_i  \colon G\z \to[0,1]$ with support in $U_i$ such that for $x\in G\z$ we have $\sum_{i=0}^d h_{i}(x)^2=1$, and for $\gamma\in V$ and $0\leq i\leq d$  we have 
    \[\sup_{\gamma\in V}|h_{i}(s_G(\gamma))- h_{i}(r_G(\gamma))|< \delta .\]
\end{enumerate}
Let $0\leq i\leq d$. Because $H_i$ is open, $\pi\inv (H_i)$ is a twist over~$H_i$ by \Cref{lem-right-up}. Since~$H_i$ is principal, \etale\ and precompact in~$G$, we may apply \Cref{subgroupoid}; it follows from Part~\ref{subgroupoid-6} that $\Cst_\red(\pi\inv (H_i);H_i)=\Cst(\pi\inv (H_i);H_i)$ has decomposition rank at most the covering dimension $N_i$ of $H_i\z $. Since $G$ is second-countable, \LCH, $G$ and hence $G\z $ is metrizable. Now  $H_i\z $ is a subspace of $G\z $, and it follows from \cite[Theorem~1.8.3]{Coornaert} that $N_i=\dim(H_i\z )\leq\dim(G\z )=N$. Thus, the decomposition rank and hence also the nuclear dimension of  $\Cst_\red(\pi\inv (H_i);H_i)$ is at most $N$. Using \Cref{lem-right-up}, we identify $\Cst_\red(\pi\inv (H_i);H_i)$ with a $\Cst$-subalgebra $B_i$ of $\Cst_\red(E;G)$.

Let $f_i\coloneqq f_{ h_{i}}\in C_c(E;G)$ be the image of $ h_{i}$ under the isomorphism described at \eqref{eq-diagonal} so that $\supp f_i\subset \iota(G\z \times \TT)$ and $f_i(\iota(x, z))=z h_{i}(x)$ for $(x,z)\in G\z \times \TT$. Then $f_i$ is self-adjoint since $ h_{i}$ is. We aim to show that $f_i$ and $B_i$ satisfy the hypotheses of \Cref{prop: colored loc subhom} with regard to the set $\Ff\subset X\coloneqq C_c(E;G)\setminus\{0\}$ and the $\varepsilon>0$ fixed above.

    Note first that $\|f_i\|_{\Cst_\red(\pi\inv (H_i);H_i)}= \| h_{i}\|_\infty\leq 1$. Next, we claim that $f_i * \Ff * f_i\subset   B_i$.  For $f\in C_c(E;G)$, $e\in E$, and $\mathfrak{s}\colon G\to E$ a (not necessarily continuous) section of $\pi$, we have 
\begin{align}
(f_i * f)( e)&=\sum_{\gamma\in r_E( e)G}f_i(\mathfrak{s}(\gamma))f(\mathfrak{s}(\gamma)\inv  e)\notag\\
&=\sum_{\substack{\gamma\in r_E( e)G:\\\mathfrak{s}(\gamma)\in \iota(G\z \times\TT)}}f_i(\mathfrak{s}(\gamma))f(\mathfrak{s}(\gamma)\inv  e)\notag\\
&=h_{i}(r_{E}( e))f( e);\label{eq:f_i * f}
\end{align}
similarly, $(f * f_i)( e)=f( e)\overline{ h_{i}(s_E( e))}=f( e) h_{i}(s_E( e))$ and 
\begin{equation*}
f_i * f * f_i=h_{i}(r_E( e))f( e)h_{i}(s_E( e)).
\end{equation*}
Thus, for $f\in \Ff\subset C_{K}(E;G)$ and $e\in E$,
\begin{align*}
(f_i * f * f_i)( e) \neq 0 &\implies  e\in K\text{ and } r_E( e), s_E( e)\in U_i \\
&\implies  \pi( e)\in V\text{ and } r_G(\pi( e)), s_G(\pi( e))\in U_i\\
&\implies  \pi( e)\in H_i.
\end{align*}
    This shows that $f_i  * f * f_i\in B_i$ and thus $f_i * \Ff * f_i \subset B_i$ as claimed.

    In order to apply \Cref{prop: colored loc subhom}, it remains to show that for each $f\in \mathcal{F}$,
\[\Big \|f-\sum_{i=0}^d f_i *f*f_i  \Big\|_{\Cst_\red(E;G)}\leq \varepsilon.\] 
Let $f\in \Ff\subset C_{K}(E;G)$. Item~\ref{consequence3} on page~\pageref{consequence3}, the choice of $\delta$ and \Cref{commutator-v2} imply that
\[
\|[f,h_i]\|_{\Cst_\red(E;G)} \leq \epsilon\|f\|_{\Cst_\red(E;G)} \leq \frac{\varepsilon}{d+1}.
\] 
An easy computation gives $f*f_i=f_i *f+ [f,  h_{i}]$. Also, using \Cref{eq:f_i * f} and that $\sum_{i=0}^d h_{i}(r_E(e))^2=1$ gives $(\sum_{i=0}^d f_{i}*f_i)*f=f$. Hence,
\[
\sum_{i=0}^d f_i *f*f_i =\sum_{i=0}^d f_i*\bigl( f_i* f+[f,  h_{i}]\bigr) =f+\sum_{i=0}^d  f_i*[f,  h_{i}].
\]
Thus
\begin{align*}
\Big \|f-\sum_{i=0}^d f_i *f*f_i  \Big\|_{\Cst_\red(E;G)}&=\Big\|\sum_{i=0}^d f_i*[f,  h_{i}]  \Big\|_{\Cst_\red(E;G)}\\
&\leq (d+1)\| h_{i}\|_\infty\|[f, h_{i}]\|_{\Cst_\red(E;G)} \\
& \leq \varepsilon .
\end{align*}
It now follows from  \Cref{prop: colored loc subhom} that if the unit space of $G$ is compact, then the nuclear dimension of $\Cst_\red(E;G)$ is at most $(d+1)(N+1)-1$.

Now we suppose that $G\z $ is not compact.  Let $\cG$ be the Alexandrov groupoid described in \Cref{lem:etale:tilde G:topologically'}, and let $(\widetilde E,\widetilde \iota, \widetilde\pi)$ be the Alexandrov twist over~$\cG$ described in \Cref{tilde E' v2}.  Since~$G$ has dynamic asymptotic dimension~$d$, so does $\cG$ by \Cref{DAD of Gtilde'}. By \Cref{dimension of unitization}   we have 
$\dim (\cG\z )=
\dim\big(G\z_{+}\big)
=\dim(G\z )\leq N$.
From the compact case we have that $\dim_\nuc(\Cst_\red(\widetilde E;\cG))$ is at most $(N+1)(d+1)-1$.  By \Cref{twisted unitizations'},  $\Cst_\red(\widetilde E;\cG)$ is the minimal unitization of $\Cst_\red(E;G)$, and hence it follows from  \cite[Remark~2.11]{WinterZacharias:nuclear}  that the nuclear dimension of $\Cst_\red(E;G)$ is also at most  $(N+1)(d+1)-1$. 
\end{proof}

\goodbreak
\section{Application to \texorpdfstring{$\Cst$}{C*}-algebras of non-principal groupoids}\label{sec-application}

In this section we consider a groupoid $G$ with potentially large stability subgroups. \Cref{non-principal groupoids} gives a bound on the nuclear dimension of $\Cst_\red(G)$ involving the topological dimension of $G\z $ and the dynamic asymptotic dimension of the quotient of $G$ by its isotropy subgroupoid. Its proof is based on an isomorphism from \cite{Clark-anHuef-RepTh} of  $\Cst(G)$ with a twisted groupoid  $\Cst$-algebra 
to which \Cref{holy grail} applies.

 The \emph{isotropy subgroupoid} $\Aa$ of $G$ is
\[\Aa\coloneqq\{\alpha\in  G  : r_{ G }(\alpha)=s_{ G }(\alpha)\};\] it is closed and hence locally compact, and acts on $ G $ 
on the right
via $\gamma\cdot\alpha=\gamma\alpha$ when $s_{ G }(\gamma)=r_{ G }(\alpha)$. The quotient $\Rr\coloneqq  G /\Aa$ is a principal groupoid.  We write $\pi \colon G\to\Rr$ for the quotient map.

Throughout this section, $G$ is a second-countable, \LCH\ groupoid, with abelian stability subgroups that vary continuously, i.e.,  the map \[u\mapsto \Aa_{u}\coloneqq\{\alpha\in G : r_G(\alpha)=u=s_G(\alpha)\}\] is continuous from $ G\z $ into the space of closed subgroups of $ G $ in the Fell topology (see \cite[\S3.4]{Williams:groupoid}). By \cite[Theorem~6.12]{Williams:groupoid}, this continuity is equivalent to $\Aa$ having a Haar system.
For \etale\ groupoids we can say more:

\begin{lemma}\label{cts stabilisers} Let $ G $ be a 
\LCH\  and \'etale groupoid. Then the stability subgroups of $G$ vary continuously if and only if $\Aa$ is open in $G$.
\end{lemma}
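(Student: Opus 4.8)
The plan is to prove both implications by working with a fixed unit $u\in G\z$ and a net of units $u_n\to u$, translating Fell-topology convergence of $\Aa_{u_n}\to\Aa_u$ into openness of $\Aa=\{\alpha\in G: r_G(\alpha)=s_G(\alpha)\}$ in $G$, using crucially that for an \'etale groupoid $G$ the range map is a local homeomorphism and $G\z$ is open in $G$. Recall that $\Aa$ is always closed in $G$ (it is the preimage of the diagonal under $\gamma\mapsto(r_G(\gamma),s_G(\gamma))$, which is continuous into the Hausdorff space $G\z\times G\z$), so the content is entirely about openness.

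\medskip

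First I would prove the ``if'' direction: suppose $\Aa$ is open in $G$. Fix $u\in G\z$ and a net $u_n\to u$ in $G\z$. To check Fell-topology convergence $\Aa_{u_n}\to\Aa_u$, I would verify the two standard conditions (see \cite[\S H.1]{Williams:Crossed} or \cite[\S3.4]{Williams:groupoid}): (a) for every $\alpha\in\Aa_u$ there exist $\alpha_n\in\Aa_{u_n}$ with $\alpha_n\to\alpha$; and (b) whenever $\alpha_{n_k}\in\Aa_{u_{n_k}}$ for a subnet and $\alpha_{n_k}\to\alpha$, then $\alpha\in\Aa_u$. For (a): given $\alpha\in\Aa_u\subset\Aa$, pick an open bisection $B\ni\alpha$ with $B\subset\Aa$ (possible since $\Aa$ is open and $G$ is \'etale, hence has a basis of open bisections). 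Since $r_G|_B$ is a homeomorphism onto an open set containing $u=r_G(\alpha)$, for $n$ large $u_n$ lies in $r_G(B)$, so there is $\alpha_n\in B$ with $r_G(\alpha_n)=u_n$; as $\alpha_n\in B\subset\Aa$ we have $s_G(\alpha_n)=r_G(\alpha_n)=u_n$, i.e.\ $\alpha_n\in\Aa_{u_n}$, and $\alpha_n\to\alpha$ by continuity of $(r_G|_B)\inv$. For (b): if $\alpha_{n_k}\to\alpha$ with $r_G(\alpha_{n_k})=s_G(\alpha_{n_k})=u_{n_k}\to u$, then by continuity $r_G(\alpha)=s_G(\alpha)=u$, so $\alpha\in\Aa_u$. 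Hence $u\mapsto\Aa_u$ is continuous.

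\medskip

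For the ``only if'' direction, suppose the stability subgroups vary continuously; I want to show $\Aa$ is open. Let $\alpha\in\Aa$, so $r_G(\alpha)=s_G(\alpha)=:u$. Choose an open bisection $B$ of $G$ with $\alpha\in B$; shrinking $B$, I may assume $r_G(B)$ and $s_G(B)$ are open in $G\z$. Suppose for contradiction that $B\not\subset\Aa$ is forced on every such neighborhood, i.e.\ that $\alpha$ is not interior to $\Aa$. Then there is a net $\beta_n\to\alpha$ in $G$ with $\beta_n\notin\Aa$, and we may assume $\beta_n\in B$ for all $n$. Put $u_n\coloneqq r_G(\beta_n)\to u$. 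By continuity of the stabiliser map, $\Aa_{u_n}\to\Aa_u$ in the Fell topology; since $\alpha\in\Aa_u$, condition (a) of Fell convergence yields $\gamma_n\in\Aa_{u_n}$ with $\gamma_n\to\alpha$. For $n$ large, both $\beta_n$ and $\gamma_n$ lie in $B$ (an open neighbourhood of $\alpha$) and $r_G(\beta_n)=u_n=r_G(\gamma_n)$; since $B$ is a bisection, $\beta_n=\gamma_n$, so $\beta_n\in\Aa_{u_n}\subset\Aa$, contradicting $\beta_n\notin\Aa$. Therefore $\alpha$ is interior to $\Aa$, and $\Aa$ is open.

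\medskip

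I expect the main obstacle to be bookkeeping with the precise formulation of the Fell topology on closed subsets/subgroups — in particular making sure the ``lower semicontinuity'' clause (a) is the one I extract and that it is available from the hypothesis exactly in the subnet-free form used above; one must be a little careful that convergence $\Aa_{u_n}\to\Aa_u$ of \emph{subgroups} in Fell's sense (as in \cite[\S H]{Williams:Crossed}) gives, for each $\alpha\in\Aa_u$, an actual net $\alpha_n\in\Aa_{u_n}$ (not merely a subnet) converging to $\alpha$. Modulo that point, both directions reduce to the single \'etale feature that open bisections separate elements over a common range point, which is what converts ``limits stay in $\Aa$'' into ``$\Aa$ is open''. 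Everything else — closedness of $\Aa$, continuity of $r_G,s_G$, existence of a basis of open bisections — is standard and already recalled in \Cref{Preliminaries}.
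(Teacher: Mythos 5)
Your argument is correct, but it takes a different route from the paper. The paper proves both implications by passing through \cite[Theorem~6.12]{Williams:groupoid}: for ``$\Aa$ open $\Rightarrow$ continuous'' it observes that the counting measures on the \'etale\ groupoid $G$ restrict to a Haar system on the open subgroupoid $\Aa$, and continuity of the stabiliser map then follows from that theorem; for the converse it extracts from the same theorem that $r_\Aa$ is an open map, and then shows $B=C\cap\Aa=C\cap r_G\inv(r_\Aa(B))$ is open for an open bisection $C\ni\gamma$. You instead work directly with the net characterization of Fell convergence (your conditions (a) and (b), as in Williams' appendix on the Fell topology), verifying them with lifts through open bisections in one direction and deriving a contradiction via injectivity of $r_G$ on a bisection in the other; the point you flag about obtaining a genuine eventually-defined net (rather than a subnet) in clause (a) is indeed the standard formulation, and even the weaker subnet version would suffice for your ``only if'' argument, so there is no gap there. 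What the two approaches buy: the paper's proof is shorter because the equivalence ``stabilisers vary continuously $\iff$ $\Aa$ admits a Haar system $\iff$ $r_\Aa$ is open'' is quoted wholesale, whereas yours is self-contained modulo the elementary description of Fell convergence and makes the role of the \'etale\ bisection structure explicit in both directions; both proofs ultimately rest on the same feature, that an open bisection meets each fibre $r_G\inv(u)$ at most once.
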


\begin{proof}
First, assume that $\Aa$ is open in $G$.  Since $G$ is \'etale it has a Haar system of counting measures, which, since $\Aa$ is open, restricts to a Haar system on $\Aa$. Now the  stability subgroups vary continuously by \cite[Theorem~6.12]{Williams:groupoid}.

Second, assume that the  stability subgroups of $G$  vary continuously. Then the restriction $r_{\Aa}$
of the range map  $r_G$ to $\Aa$ is open by   \cite[Theorem~6.12]{Williams:groupoid}.  Fix $\gamma\in \Aa$; we will find an open neighborhood $B$  in $G$ such that $\gamma\in B\subset \Aa$. 
 Since $G$ is \'etale, there exists an open bisection $C$ in $G$ with $\gamma\in C$. Set $B=C\cap\Aa$.   We claim that $B=C\cap r\inv_G (r_{\Aa}(B))$.  Since $C$ and $r\inv_{G} (r_{\Aa}(B))$ contain $B$ we have $B\subset C\cap r\inv_{G} (r_{\Aa}(B))$.
 If $\eta\in C\cap r_{G}\inv (r_{\Aa}(B))$, then  $r_{\Aa}(\eta)=r_{G}(\eta)\in r_{\Aa}(B)$, so there exists $\eta'\in B$ such that $r_{\Aa}(\eta)
 =r_{\Aa}(\eta')$. Since $\eta,\eta'\in C$ have the same range and $C$ is a bisection, we must have $\eta=\eta'\in B$, proving that $C\cap r_{G}\inv (r_{\Aa}(B))\subset B$. 
 Thus $B=C\cap r\inv_{G} (r_{\Aa}(B))$, as claimed.
 
 Since 
    $C$ is open in $G$, $B$ is open in $\Aa$. Since $r_{\Aa}\colon \Aa\to G\z$ is an open map, 
 $r_{\Aa}(B)$ is open in 
    $G\z$ and so
 it follows that $r\inv_{G} (r_{\Aa}(B))$ is open in $G$, and then so is $C\cap r\inv_{G} (r_{\Aa}(B))$. Now $B$ is open in $G$, and hence so is $\Aa$. 
\end{proof}

Let $\nu$ be a Haar system on $\Aa$ and denote
by $\widehat{\Aa}$ the spectrum of the separable commutative $\Cst$-algebra $\Cst (\Aa,\nu)$. Then $\widehat{\Aa}$ is a second-countable, \LCH\ space. As discussed on \cite[page~3630]{Muhly-Williams-Renault:tr3}, 
\[\widehat{\Aa}=\{(\chi,u)  : u\in  G\z , \chi\in\widehat{\Aa_u}\},\]
where $(\chi,u)(f)=\int_{\Aa_u} \chi(a)f(a) d\nu^u(a)$ for $f\in C_c(\Aa)$.

We write $p\colon \widehat{\Aa}\to G\z =\Rr\z$ for the map $(\chi,u)\mapsto u$, which is open by \cite[page~3631]{Muhly-Williams-Renault:tr3}. If $(\chi,u) \in\widehat{\Aa}$ 
and $\gamma\in  G $ satisfy $
u
=r_{ G }(\gamma)$, then there is a  character  $\chi\cdot\gamma\in\widehat{\Aa}_{s(\gamma)}$ defined by 
$\chi\cdot\gamma(a)=\chi(\gamma a\gamma\inv)$.
Since $\chi\cdot\gamma$ depends only on $\dot{\gamma}\coloneqq \pi(\gamma)$,  there is a right action of $\Rr$ on $\widehat{\Aa}$ given by
\begin{equation}\label{Raction}
(\chi,u)\cdot\dot{\gamma}=(\chi\cdot\gamma, s_{ G }(\gamma ))
\end{equation}
for $\gamma\in G $ with $r_{ G }(\gamma)=u$.

We will now consider the transformation groupoid $\widehat{\Aa}\rtimes\Rr$ of this action as in \cite{Muhly-Williams-Renault:tr3}: as a set, it is given by
\begin{equation*}\label{MRWgroupoid}
\widehat{\Aa}*\Rr=\{(\chi, u,\dot{\gamma})\in\widehat{\Aa}\times\Rr : u=r_{ G }(\gamma)\}.
\end{equation*}
Two elements $(\chi,u,\dot{\gamma})$ and $(\eta,v, \dot\alpha)$ of $\widehat{\Aa}*\Rr$ are composable if and only  if $(\eta, v)=(\chi,u)\cdot\gamma=(\chi\cdot\gamma, s_{ G }(\gamma))$, and then
\[(\chi,u, \dot{\gamma})(\chi\cdot\gamma, s_{ G }(\gamma),\dot{\alpha})=(\chi,u, \dot{\gamma}\dot{\alpha});\]
the inverse is defined by $(\chi,u, \dot{\gamma})\inv =(\chi\cdot\gamma,s_{ G }(\gamma),\dot{\gamma}\inv )$. 
The range of $(\chi,u, \dot{\gamma})$ is hence given by $(\chi , u, u)$ and its
source by $(\chi \cdot \gamma , s_{ G }(\gamma ), s_{ G }(\gamma ))$, and 
the unit space of $\widehat{\Aa}\rtimes \Rr$ is $\{(\chi, u, u) : (\chi, u) \in\widehat{\Aa}\}$.

To simplify notation, we will write the elements of $\widehat{\Aa}\rtimes\Rr$ as $(\chi,\dot\gamma)$, and then  the unit space is identified with $\widehat{\Aa}$. Finally, observe that $\widehat{\Aa}*\Rr$ is closed in $\widehat{\Aa}\times\Rr$, and so $\widehat{\Aa}\rtimes\Rr$ is locally compact and Hausdorff.  It is straightforward to verify that   $\widehat{\Aa}\rtimes\Rr$ is principal.

\begin{lemma}\label{lem:summary}
Let $ G $ be a second-countable, locally compact and Hausdorff  groupoid
(not necessarily \etale)
with abelian and continuously varying stability subgroups. Let $\Aa$ be the isotropy groupoid and let $\Rr= G /\Aa$  be the quotient groupoid.

\begin{enumerate}[label=\textup{(\arabic*)}]

\item\label{it:cR T2}
Then $\Rr$ is locally compact and  Hausdorff.

\item\label{it:cA open} The subgroupoid  $\Aa$ is open in $ G $ if and only if  $\Rr $ is r-discrete.

\item\label{it:implies cR etale} 
If $ G $ is \etale, then $\Rr$ is \etale.

\end{enumerate}
\end{lemma}

\begin{proof}[Proof of Lemma~\ref{lem:summary}]
For \ref{it:cR T2}, the action of $ G $ on itself is proper by \cite[Example~2.16]{Williams:groupoid}. Since $\Aa$ is closed in $ G $, it follows easily from \cite[Proposition~2.17]{Williams:groupoid} that the action of $\Aa$ on $ G $ is  proper. Moreover, $\Aa$ is itself locally compact and Hausdorff. Since the stability subgroups vary continuously, 
the restriction of the range map to $\Aa$ is open  by \cite[Theorem~6.12]{Williams:groupoid}. It now follows from \cite[Proposition~2.18]{Williams:groupoid} applied to $\Aa$ that $\Rr= G /\Aa$ is locally compact and Hausdorff.

    \medskip
    
    For \ref{it:cA open}, we observe that $\pi\inv(\Rr\z )=\Aa$ because
    \begin{align*}
        \gamma \in\pi\inv(\Rr\z ) 
        &\iff  \pi(\gamma ) = \pi(\gamma )\pi(\gamma )\inv 
        \iff  \pi(\gamma ) = \pi(r(\gamma ))\\
        &\iff \exists a\in \Aa, \gamma  a = r(\gamma ) \iff \gamma \in\Aa.
    \end{align*}
   Now $\Rr$ is r-discrete if and only if $\Rr\z $ is open, and  by definition of the quotient topology, this happens  if and only if $\pi\inv (\Rr\z )=\Aa$ is open in $ G $.
    
    \medskip
    
    For \ref{it:implies cR etale}, suppose that $G$ is \etale. Since the stability subgroups vary continuously, $\Aa$ is open by \Cref{cts stabilisers}.
    Then $\Rr$ is r-discrete by  Part~\ref{it:cA open}. For $U\subset \Rr$, one checks that
    \begin{align*}
        \pi\inv(r_{\Rr}(U))=\Aa\cap r_{ G }\inv \Bigl(r_{ G }\bigl(\pi\inv (U)\bigr)\Bigr).
    \end{align*}
    Since $G$ is \etale,  $r_{ G }$ is an open map. So if $U$ is open in $\Rr$, then $\pi\inv(r_{\Rr}(U))$ is open in $ G $, using
        the above equality and
    that $\Aa$ is open. Since $\Rr\z$ is open and $\Rr$ has the quotient topology, this proves that $r_{\Rr}\colon \Rr\to\Rr\z  $ is  open.
     Thus $\Rr$ is \etale\ by \cite[Proposition~1.29]{Williams:groupoid}.
  \end{proof}

In the following, we consider a general transformation groupoid rather than the specific $\widehat{\Aa}\rtimes \Rr$; we thank the referee for bringing to our attention that our proofs work in that generality.
\Cref{it:dad K vs dad XrtimesK} of the next proposition was subsequently proved in greater generality in \cite[Proposition~2.4]{Bo:2023:DAD-pp}.

\begin{prop}
\label{prop finite dad for ANY transformation groupoid}
Let $X$ be a \LCH\ space and let $\mathcal{K}$ be a  \LCH\  and  \etale\ groupoid. Suppose that $\mathcal{K}$ acts continuously on the right of $X$ with surjective anchor map $\sigma\colon X\to \mathcal{K}\z$. 

\begin{enumerate}[label=\textup{(\arabic*)}]
    \item\label{it:cR etale implies cH etale NEW} The transformation groupoid $X\rtimes\mathcal{K}$ is \etale.
    \item\label{it:pr cts open NEW} 
    If $\sigma$ is open, then the projection map  $\mathrm{pr}_{\mathcal{K}}\colon X\rtimes \mathcal{K}\to \mathcal{K}$, $(x,k)\mapsto k$,  is   continuous and open.
    \item\label{it:dad K vs dad XrtimesK}
    If $\sigma$ is open and if $\dad(\mathcal{K})\leq d$, then  $\dad(X\rtimes\mathcal{K})\leq d$.
\end{enumerate}
\end{prop}

\begin{proof}
We set $\Hh\coloneqq  X \rtimes \mathcal{K} $ 
    for convenience.

\medskip

    For \ref{it:cR etale implies cH etale NEW}, $ X  \times \mathcal{K}\z $ is open in $ X  \times \mathcal{K}$, so that
    \[
        \Hh\z  = ( X  \times \mathcal{K}\z )\cap \Hh
    \]
    is open in $\Hh$, proving that $\Hh$ is r-discrete. We henceforth identify $\Hh\z$ with $X$ via the map $(x,\sigma(x))\mapsto x$.
    To see that $\Hh$ is \etale, we will show that the range map $r_{\Hh}\colon \Hh\to\Hh\z $ is open. Let $\net{x_\lambda}{\lambda}$ be a net in $\Hh\z$ that converges to  $r_{\Hh}(x,k)=x$ in $\Hh\z$.
    Then $\sigma(x_\lambda)\to \sigma(x)=r_{\mathcal{K}}(k)$. Since $\mathcal{K}$ is \etale, $r_{\mathcal{K}}$ is open by \cite[Proposition~1.29]{Williams:groupoid}. It follows from Fell's criterion (\cite[Proposition~1.1]{Williams:groupoid}) that there exists a subnet $\net{x_{f(\mu)}}{\mu}$ of $\net{x_\lambda}{\lambda}$ and a net $\net{k_\mu}{\mu}$ in $\mathcal{K}$ such that $r_{\mathcal{K}}(k_\mu)=\sigma(x_{f(\mu)})$ and $k_\mu\to k$. Thus, $(x_{f(\mu)},k_\mu)$ is a lift of  
    $x_{f(\mu)}\in \Hh\z$
    in $\Hh$ under $r_{\Hh}$ which converges to $(x,k)$. By another application of Fell's criterion, this implies that $r_{\Hh}$ is open. Thus, $\Hh$ is \etale.

\medskip

For \ref{it:pr cts open NEW}, we first observe that continuity of $\mathrm{pr}_{\mathcal{K}}$ is immediate because $ X \rtimes\mathcal{K}$ has the subspace topology  from $ X \times\mathcal{K}$.
To see that $\mathrm{pr}_{\mathcal{K}}$ is open, we will use Fell's criterion  twice.
Let $\net{ k_{\lambda}}{\lambda}$ be a net in $\mathcal{K}$ that converges to $ k=\mathrm{pr}_{\mathcal{K}} ( x , k)$. Then $u_\lambda\coloneqq r_{\mathcal{K}}(k_{\lambda})$ is a net converging to $u\coloneqq r_{\mathcal{K}}(k)=\sigma( x )$. Since $\sigma$ is open and surjective, by Fell's criterion there exists a subnet  $\net{u_{f(\mu)}}{\mu}$ of $\net{u_{\lambda}}{\lambda}$ and a lift $\net{ x _{\mu}}{\mu}$ of that subnet under~$\sigma$ (i.e., $\sigma( x_\mu)=u_{f(\mu)}$) such that 
$x_\mu\to x $. Then  $\net{( x_\mu,  k_{f(\mu)})}{\mu}$ is a net in $ X \rtimes\mathcal{K}$ that is a lift of the subnet $\net{ k_{f(\mu)}}{\mu}$ under $\mathrm{pr}_{\mathcal{K}}$ and that converges to $( x ,  k)$. By Fell's criterion, this proves that $\mathrm{pr}_{\mathcal{K}}$ is open.

\medskip

   For \ref{it:dad K vs dad XrtimesK},  let $W\subset \Hh$ be precompact and  open; without loss of generality assume $W=W\inv$, so  that $r_{\Hh}(W)\cup s_{\Hh}(W)=r_{\Hh}(W)$. By Part~\ref{it:pr cts open NEW}, the projection map $\mathrm{pr}_{\mathcal{K}}\colon \Hh\to \mathcal{K}$ onto the second component is a continuous open map, so that
    $W'\coloneqq   \mathrm{pr}_{\mathcal{K}} (W)= (W')\inv$ is open in $ \mathcal{K} $. Recall that images of precompact sets under continuous maps are precompact, so 
    long as the codomain is Hausdorff.
    Therefore, $W'$ is precompact.   
    Since $\dad( \mathcal{K} )\leq d$, there  exist open $U_{0},\ldots , U_{d} \subset  \mathcal{K}\z $ which cover $r_{ \mathcal{K} }(W')$ and such that $$ \mathcal{K} _{i}\coloneqq   \langle W'\cap \restrict{ \mathcal{K} }{U_{i}} \rangle $$ is precompact in $ \mathcal{K} $.  Let $V_{i}\coloneqq  \sigma\inv (U_{i})\cap r_{\Hh}(W)  \subset  
    X$. Since $\sigma $ is continuous, the $V_{i}$'s are open subsets of $ X $. We claim that since the $U_{i}$'s cover $r_{ \mathcal{K} }(W')$, the $V_{i}$'s cover $r_{\Hh}(W)$. For  an arbitrary $x\in r_{\Hh}(W)$, take $k\in \mathcal{K} $ such that  $(x,k)\in W$; in particular, 
    \(
    \sigma(x)=r_{ \mathcal{K} }(k)
    \)
     and $k=\mathrm{pr}_{\mathcal{K}}(x,k)\in W'$. Thus $\sigma (x)=r_{ \mathcal{K} }(k)\in U_{i}$ for some $0\leq i\leq d$, i.e., $x\in \sigma\inv (U_{i}) \cap r_{\Hh}(W)= V_{i}$, as claimed.

   To conclude that $\dad(\Hh)\leq d$, it now suffices to show that $\Hh_{i}\coloneqq   \langle W\cap \restrict{\Hh}{V_{i}}\rangle $ is precompact. Towards this, note first that each $V_i$ is precompact. Indeed, since $ X $ is Hausdorff,  $W$ is precompact and $r_{\Hh}$ is
   continuous, it follows, as argued before, that $r_{\Hh}(W)$ and hence $V_{i}$ is precompact.
    
    We claim that $\Hh_{i}$ is contained in $ X  \times  \mathcal{K} _{i}$. To see this, let
    $(x,k)\in \Hh_{i}$. 
    There exist finitely many $( x_{j}, k_{j})\in W\cap \restrict{\Hh}{V_{i}}$ such that $( x, k)=( x_{1}, k_{1})\cdots  ( x_{m}, k_{m})$. As $( x_{j}, k_{j})\in W$, we know that $ k_{j}\in W'$. As $( x_{j}, k_{j})\in  \restrict{\Hh}{V_{i}}$, we have that both $x_{j} =r_{\Hh}( x_{j}, k_{j})$ and $x_{j}\cdot  k_{j} = s_{\Hh}( x_{j}, k_{j})$  is in $V_{i}$; in particular, $r_{ \mathcal{K} }( k_{j})=\sigma ( x_{j})$ and $s_{ \mathcal{K} }( k_{j})=\sigma ( x_{j}\cdot  k_{j})$ are both elements of $\sigma (V_{i})\subset U_{i}$. Thus, $ k_{j}\in  \mathcal{K} |_{U_{i}}$ and so $ k= k_{1}\cdots k_{m}\in  \mathcal{K} _{i}$, as claimed.

    To see that $\Hh_{i}$ is precompact, suppose $\net{( x_{\lambda}, k_\lambda)}{\lambda}$ is a net in $\Hh_{i}$; we must show that there exists a subnet that converges (in $\Hh$, not necessarily in $\Hh_{i}$). By the above argument, $ k_{\lambda}$ is in the precompact  $ \mathcal{K} _{i}$, so by passing to a subnet, we can without loss of generality assume that $\net{ k_{\lambda}}{\lambda}$ converges (again, in $ \mathcal{K} $ and not necessarily in $ \mathcal{K} _{i}$). By definition of $\Hh_{i}$, we further have that all  $x_\lambda=r_{\Hh}( x_{\lambda}, k_\lambda)$ are elements of the precompact $V_{i}$. Again, passing to a subnet allows us  without loss of generality to assume that $\net{x_\lambda}{\lambda}$
    converges in~$ X $. Since $\Hh$ has the subspace topology inherited from $ X \times \mathcal{K} $, we conclude that $\net{( x_\lambda, k_\lambda)}{\lambda}$ (has a subnet that) converges in $\Hh$, as required. This proves that $\Hh_{i}$ is precompact. 
    Thus, $\dad(\Hh)\leq d$.
\end{proof}

\begin{corollary}\label{cor finite dad for transformation groupoid}
Let $G$ be a second-countable, \LCH\ and \etale\ groupoid. Suppose that the stability subgroups of~$G$ are abelian 
and vary continuously.
Let $\Rr$ be the quotient groupoid of $ G $ by the isotropy groupoid $\Aa$. The transformation groupoid $\widehat{\Aa}\rtimes \Rr$ is \etale. Moreover,
if $\dad(\Rr)\leq d$, then  $\dad(\widehat{\Aa}\rtimes\Rr)\leq d$.
\end{corollary}

\begin{proof}
    Since the  abelian stability subgroups of $G$ vary continuously, we may invoke Lemma~\ref{lem:summary}: $\Rr$ is locally compact and Hausdorff by Part~\ref{it:cR T2}, and it is \etale\ by Part~\ref{it:implies cR etale} since $G$ is \etale.
    Since $\widehat{\Aa}$ is the spectrum of the commutative  $\Cst$-algebra $\Cst(\Aa)$, it is locally compact and Hausdorff. Since the anchor map $p\colon \widehat{\Aa}\to \Rr\z$ is open (\cite[page~3631]{Muhly-Williams-Renault:tr3}), we may thus further invoke  Proposition~\ref{prop finite dad for ANY transformation groupoid}: $\widehat{\Aa}\rtimes \Rr$ is \etale\ by  Part~\ref{it:cR etale implies cH etale NEW}, and  $\dad(\widehat{\Aa}\rtimes\Rr)\leq \dad(\Rr)$ by Part~\ref{it:dad K vs dad XrtimesK}.
\end{proof}

\begin{corollary}
\label{non-principal groupoids}
Let $ G $ be a second-countable, \LCH\ and \etale\ groupoid.  Suppose that the orbits of $ G $ are closed in $ G $ and that the stability subgroups of $ G $ are abelian 
and vary continuously.
Let $\Aa$ be the isotropy groupoid, 
let $\widehat{\Aa} $ be the spectrum of the commutative $\Cst$-algebra $\Cst(\Aa)$, and let $\Rr$ be the quotient groupoid of $ G $ by  $\Aa$.
Suppose
that the  topological dimension of $\widehat{\Aa} $ is at most $N$ and that $\Rr$ has finite dynamic asymptotic dimension at most $d$.  Then the nuclear dimension of $\Cst_\red( G )=\Cst( G )$ is at most $(N+1)(d+1)-1$.
\end{corollary}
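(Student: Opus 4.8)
The plan is to transport the problem, via \cite{Clark-anHuef-RepTh}, to a twisted groupoid $\Cst$-algebra over the principal groupoid $\Hh\coloneqq\widehat{\Aa}\rtimes\Rr$ and then apply \Cref{holy grail}. First I would assemble the structural facts about $\Hh$. By \Cref{lem:summary}\ref{it:implies cR etale} the quotient $\Rr$ is \etale, so $\Hh$ is \etale\ by \Cref{lem:summary}\ref{it:cR etale implies cH etale}; as recorded before \Cref{lem:summary}, $\Hh$ is moreover \LCH\ and principal, and it is second-countable, being a subspace of $\widehat{\Aa}\times\Rr$ with $\widehat{\Aa}$ a second-countable \LCH\ space and $\Rr$ an open continuous image of~$G$. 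Its unit space is (identified with) $\widehat{\Aa}$, so $\dim(\Hh\z)=\dim(\widehat{\Aa})\le N$; and since $\dad(\Rr)\le d$, \Cref{prop finite dad for transformation groupoid} gives $\dad(\Hh)\le d$.

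Next I would invoke \cite{Clark-anHuef-RepTh}: under the hypotheses that $G$ has closed orbits and abelian, continuously varying isotropy, there is a twist $(\widetilde E,\widetilde\iota,\widetilde\pi)$ over $\Hh$ together with isomorphisms $\Cst(G)\cong\Cst(\widetilde E;\Hh)$ and $\Cst_\red(G)\cong\Cst_\red(\widetilde E;\Hh)$ that are induced by a single map on the level of compactly supported functions, hence compatible with the canonical surjections onto the reduced completions. Matching the cited statement to this setup — in particular pinning down exactly where the closed-orbit assumption is needed, as opposed to the continuity of the isotropy — is the step I expect to require the most care; the construction of $\widehat{\Aa}\rtimes\Rr$ recalled above (cf.\ \cite{Muhly-Williams-Renault:tr3}) should dovetail with it.

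It then remains to draw the two conclusions. For $\Cst_\red(G)=\Cst(G)$, I would argue as in the remark following \Cref{lem:dad of restriction'}: since $\dad(\Hh)\le d$ and $\dim(\Hh\z)\le N$ are finite, $\Cst_\red(\Hh)$ is nuclear by \cite[Theorem~8.6]{GWY:2017:DAD}, whence $\Hh$ is amenable by \cite[Corollary~6.2.14]{AnRe:Am-gps}; amenability of the base groupoid forces $\Cst_\red(\widetilde E;\Hh)=\Cst(\widetilde E;\Hh)$, and feeding this through the isomorphisms above shows that the canonical surjection $\Cst(G)\to\Cst_\red(G)$ is an isomorphism. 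For the dimension bound, $\Hh$ is a second-countable, \LCH, principal and \etale\ groupoid with $\dim(\Hh\z)\le N$ and $\dad(\Hh)\le d$, so \Cref{holy grail} applied to the twist $\widetilde E$ over $\Hh$ gives $\dim_{\nuc}(\Cst_\red(\widetilde E;\Hh))\le(N+1)(d+1)-1$; transporting along $\Cst_\red(G)\cong\Cst_\red(\widetilde E;\Hh)$ yields $\dim_{\nuc}(\Cst_\red(G))=\dim_{\nuc}(\Cst(G))\le(N+1)(d+1)-1$, as desired.
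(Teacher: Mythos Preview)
Your approach is essentially the paper's: assemble the structural facts about $\Hh=\widehat{\Aa}\rtimes\Rr$ via \Cref{lem:summary} and \Cref{prop finite dad for transformation groupoid}, invoke the twist from \cite{Muhly-Williams-Renault:tr3,Clark-anHuef-RepTh}, and apply \Cref{holy grail}. The one substantive difference is how you obtain $\Cst(G)=\Cst_\red(G)$. You want to pull this back from $\Cst(\widetilde E;\Hh)=\Cst_\red(\widetilde E;\Hh)$ via ``compatible'' isomorphisms at both the full and reduced levels; but \cite[Proposition~6.3]{Clark-anHuef-RepTh} is stated only for the \emph{full} $\Cst$-algebras, so your square is missing its bottom edge and the ``hence compatible with the canonical surjections'' is not justified as written. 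The paper sidesteps this entirely: it shows $G$ itself is amenable directly from the hypotheses (closed orbits and abelian isotropy) via \cite[Lemma~5.4]{vanWyk-Williams}, giving $\Cst(G)=\Cst_\red(G)$ without needing a reduced-level isomorphism from \cite{Clark-anHuef-RepTh}. It also gets amenability of $\Hh$ from \cite[Corollary~8.25]{GWY:2017:DAD} (principal plus finite $\dad$) rather than through nuclearity, which is slightly cleaner but equivalent in this context. With that one adjustment your argument goes through.
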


\begin{proof} 
Let  $\Dd$ be the  $\TT$-groupoid over $\widehat{\Aa} \rtimes\Rr$ constructed in \cite[\S4, page~3636]{Muhly-Williams-Renault:tr3}. Since 
the abelian stability subgroups vary continuously and $G$ is \etale, so are $\Rr$ and $\widehat{\Aa} \rtimes\Rr$ by \Cref{lem:summary}.
Since $\Rr$ has finite dynamic asymptotic dimension at most $d$, so does $\widehat{\Aa} \rtimes\Rr$ by 
\Cref{cor finite dad for transformation groupoid}.
By \Cref{holy grail}, $\Cst_\red(\Dd;\widehat{\Aa} \rtimes\Rr)$ has nuclear dimension at most $(N+1)(d+1)-1$.  

Since $\widehat{\Aa} \rtimes\Rr$ is principal and has finite dynamic asymptotic dimension, it is amenable by \cite[Corollary~8.25]{GWY:2017:DAD}. Since  $\Dd$ is an extension of $\widehat{\Aa} \rtimes\Rr$ by $\TT$, it follows that $\Dd$ is topologically amenable  by \cite[Proposition~5.1.2]{AnRe:Am-gps}. Hence, $\Cst_\red(\Dd)=\Cst(\Dd)$, and then also $\Cst_\red(\Dd;\widehat{\Aa} \rtimes\Rr)=\Cst(\Dd;\widehat{\Aa} \rtimes\Rr)$. 

Since the stability subgroups are abelian  and the orbits are closed, $G$ is amenable by \cite[Lemma~5.4]{vanWyk-Williams}, and hence $\Cst( G )=\Cst_\red(G)$. 
Since  the stability subgroups also vary continuously,
$\Cst(\Dd;\widehat{\Aa} \rtimes\Rr)$ and $\Cst( G )$ are isomorphic by \cite[Proposition~6.3]{Clark-anHuef-RepTh}. Now the \namecref{non-principal groupoids} follows.
\end{proof}
We now give two examples which illustrate  \Cref{non-principal groupoids}.

\begin{example}\label{ex: transformation} In \cite[Example~5.4]{Williams:cts_trace_transformation_groups} there is an action of $\RR$ on $\CC$ which is $\sigma$-proper (that is, proper relative to the stability subgroups); the orbits are closed and the stability subgroups vary continuously.  In the following, we restrict this action to $\ZZ$ and show that this gives an example of an \etale\ groupoid $ G $ which satisfies the hypotheses of \Cref{non-principal groupoids}.

Set \[X=\{z\in\CC : |z|\in\NN\}=\{ 0 \}\cup \bigsqcup_{n\geq 1}n\TT;\]
then $\ZZ$ acts on $X$ via
\[n\cdot z=
\begin{cases}
 0  & \text{ if } z= 0 
\\
e^{2\pi i (n/|z|)}z & \text{ if } z\neq  0. 
\end{cases}
\]
The orbits are concentric circles about the origin and the stability subgroups are
\[
S_z\coloneqq\{n\in \ZZ:n\cdot z=z\}
=
\begin{cases}
\ZZ&\text{if $z=0$}
\\
|z|\ZZ&\text{if $z\neq 0$.}\end{cases}
\]

Let $ G =\ZZ\ltimes X$ be the transformation-group groupoid,  which is \etale\ since $\ZZ$ is discrete. Then the isotropy groupoid is 
\[\Aa=\big(\ZZ\times \{ 0 \}\big)\cup\bigsqcup_{n\geq 1} \big(n\ZZ\times n\TT\big)\]
which is open in $\ZZ\times X$, i.e., open in $G$. Thus, $\Rr\coloneqq G /\Aa$ is \etale\ by Lemma~\ref{lem:summary}\ref{it:implies cR etale}. For $(n,z)\in G $ write $[(n,z)]$  for the equivalence class in  $\Rr$. Note that if $(k,w)\in\Aa$, then the product $(n,z)(k,w)$ is defined  if and only if $z=k\cdot w=w$. We have $[(n,0)]=\ZZ\times\{0\}$ and if $z\neq 0$ we have
\begin{align*}
    [(n,z)]&=\{(n, z)(m|z|, z) : m\in\ZZ\}
    =(n+|z|\ZZ)\times\{z\}.
\end{align*}
In particular,
$(n,z)\sim (n+|z|m, z)$ for every $m\in\ZZ$.

We claim that $\Rr$ is a proper groupoid, and hence $\dad(\Rr)=0$.
Fix a compact subset $K$ of the unit space $\Rr\z $ (identified  with $X$). 
Without loss of generality we can make $K$ larger, and so we can assume that $K=\{z\in X : |z|\leq r\}$ for some $r\in \NN$.  Since $\ZZ\cdot K=K$ we have 
\[
\restrict{\Rr}{K}=\{[(n,z)] : r([(n,z)]), s([(n,z)])\in K\}=\{[(n,z)] : z\in K\}.
\]
Consider a sequence $[(n_i,z_i)]$  in $\restrict{\Rr}{K}$; we will show that it admits a subsequence that converges. Since $K$ is compact, we may assume that $z_i\to z$ for some $z\in K$. Then  $|z_i|=|z|$ eventually. For each $n_i$, choose $m_i\in \ZZ$ such that $n_i+|z|m_i$ is in $\{0,\dots, |z|-1\}$.
Then $[(n_i,z_i)]=[(n_i+|z|m_i,z_i)]$ has a subsequence in which the first component is constant; thus, we may without loss of generality assume that our sequence is of the form $[(n,z_i)]$ for some $n\in \{0,\dots, |z|-1\}$. Since $(n,z_i)\to (n,z)$ in $ G $, this sequence converges to $[(n,z)]$ in $\restrict{\Rr}{K}$.  
Thus, $\restrict{\Rr}{K}$ is compact. It follows that $\Rr$ is a proper groupoid.
In particular, if $V\subset \Rr$ is precompact open, then since the unit space of $\Rr$ is open, $U_{0}\coloneqq r(V)\cup s(V)$ is precompact, open in $\Rr\z $. Now choose a compact $K$ large enough to contain the closure of $U_{0}$. Then $\langle V\cap \restrict{\Rr}{U_{0}}\rangle$ is contained in $\restrict{\Rr}{K}$, which is compact by the above argument, and so it follows that $\dad(\Rr)=0$.

We claim that the topological dimension of $\widehat{\Aa} $ is $1$.
Since $\widehat{\Aa} $ is the spectrum of a separable, commutative $\Cst$-algebra, it is \LCH\ and second-countable, and hence is normal.
Since the stability subgroups are constant along orbits, we have
\[
\widehat{\Aa} 
=\{(\chi, z) : z\in X, \chi\in\widehat{S}_{z}
\}
=\bigcup_{n\in  \NN}\widehat{S}_{n}
\times n\TT,
\]
which is a countable union of closed subsets of $\widehat{\Aa} $. 
By the countable-union theorem \cite[Theorem~1.7.1]{Coornaert}, $\dim \widehat{\Aa}$ is $\sup_{n\in\NN}\dim \big(\widehat{S}_{n}\times n\TT\big)$. The dimension of   $\widehat{S}_{0}\times 0\TT=\TT\times\{ 0 \}$ is $1$.
Each $\widehat{S}_{n}\times n\TT$ for $n\neq 0$ is homeomorphic to $\TT\times\TT$, which has topological dimension $1$ by writing it as the union $\TT\times\{1\}\cup\{1\}\times\TT$ of closed subsets of dimension $1$.
Thus, $\dim(\widehat{\Aa} )$ is  $1$.

So this example fits our hypotheses:  an \etale\ groupoid $ G $ with closed orbits, abelian and continuously varying stability subgroups such that $\Rr$ is \etale\ with  finite dynamic asymptotic dimension and such that $\widehat{\Aa} $ has finite topological dimension. In particular, \Cref{non-principal groupoids} gives 
\[
\dim_\nuc(\Cst( G ))\leq (1+1)(0+1)-1\leq 1.
\]

Here  $\Cst( G )\cong C_0(X)\rtimes\ZZ$ has continuous trace by \cite[Theorem~5.1]{Williams:cts_trace_transformation_groups}.  Thus
\[\dim_\nuc(\Cst( G ))=\dim\big(\Cst( G )^\wedge\big)=\dim\big(\Prim(C_0(X)\rtimes\ZZ)\big),\] and we could have used the description of the primitive ideal space from \cite[Theorem~5.3]{Williams:prim} to determine the topological dimension of $\Prim(C_0(X)\rtimes\ZZ)$. 
\end{example}

Our second example, \Cref{ex-CaH} below, is a $\Cst$-algebra of a graph groupoid.  We start with the required background on directed graphs.

Let  $E=(E^0,E^1,r,s)$  be a row-finite directed graph, so that $r\inv (v)$ is finite for every $v\in E^0$. A finite path is a finite sequence $\mu=\mu_{1}\mu_{2}\cdots\mu_k$ of edges $\mu_i\in E^1$ with $s(\mu_j)=r(\mu_{j+1})$ for $1\le j\le k-1$; write $s(\mu)=s(\mu_k)$ and $r(\mu)=r(\mu_{1})$, and call $|\mu|\coloneqq k$ the  length of $\mu$. An  infinite path $x=x_{1}x_{2}\cdots$ is defined similarly; note that $s(x)$ is undefined. Let $E^*$ and $E^\infty$ denote the set of all finite paths and infinite paths in $E$, respectively. If $\mu=\mu_{1}\cdots\mu_k$ and $\nu=\nu_{1}\cdots\nu_j$ are finite paths with $s(\mu)=r(\nu)$, then $\mu\nu$ is the path $\mu_{1}\cdots\mu_k\nu_{1}\cdots\nu_j$. When $x\in E^\infty$ with $s(\mu)=r(x)$ define $\mu x$ similarly. A return path is a finite path $\mu$ of non-zero length such that $s(\mu)=r(\mu)$.  By \cite[Corollary~2.2]{KumjianPaskRaeburnRenault}, the cylinder sets
\[
Z(\mu)\coloneqq\{x\in E^\infty: x_{1}=\mu_{1},\ldots,x_{|\mu|}=\mu_{|\mu|}\},
\]
parameterized by $\mu\in E^*$, form a basis of compact, open sets for a locally compact,   totally disconnected, Hausdorff topology on $E^\infty$.

Given a row-finite graph $E$, in  \cite{KumjianPaskRaeburnRenault}, Kumjian, Pask, Raeburn and Renault define a groupoid $G_E$ as follows. Two paths $x,y\in E^\infty$ are shift equivalent with lag $k\in\ZZ$ (written $x\sim_k y$) if there exists $N\in\NN$ such that $x_i=y_{i+k}$ for all $i\ge N$. Then the groupoid is
\[ G_E\coloneqq\{(x,k,y)\in E^\infty\times \ZZ\times E^\infty  : x\sim_k y\}.\]
with composable pairs
\[
G_E\comp \coloneqq\{\big((x,k,y),(y,l,z)\big):(x,k,y),(y,l,z)\in G_E\},
\]
and composition and inverse given by
\[
(x,k,y)\cdot (y,l,z)\coloneqq (x,k+l,z)\quad\text{and}\quad(x,k,y)\inv \coloneqq (y,-k,x).
\]
For  $\mu,\nu\in E^*$ with $s(\mu)=s(\nu)$, let $Z(\mu,\nu)$ be the set
\[
\{(x,k,y) : x\in Z(\mu), y\in Z(\nu), k=|\nu|-|\mu|, x_i=y_{i+k}\text{ for } i>|\mu|\}.
\]
By \cite[Proposition~2.6]{KumjianPaskRaeburnRenault}, the collection of sets
\[
\{Z(\mu,\nu) : \mu,\nu\in E^*, s(\mu)=s(\nu)\}
\]
is a basis of compact, open sets for a second-countable, locally compact, Hausdorff topology on $G_E$ such that  $G_E$ is an \etale\ groupoid.  After identifying $(x,0,x)\in G_E\z $ with $x\in E^\infty $, \cite[Proposition~2.6]{KumjianPaskRaeburnRenault} says that the topology on $G_E\z $ is identical to the topology on $E^\infty $.

To discuss \Cref{ex-CaH} below we  will need item~\ref{lemma-directed-graph-groupoids-1} of the  following  lemma which was proved by the third author and Lisa Orloff Clark.

\begin{lemma}\label{lemma-directed-graph-groupoids} Let $E$ be a row-finite directed graph with no sources. 
\begin{enumerate}[label=\textup{(\arabic*)}]
    \item\label{lemma-directed-graph-groupoids-1}  If $E$ has no return paths, then 
    $\dad(G_E)=0$. 
    \item\label{lemma-directed-graph-groupoids-2}  If $E$ has a return path, then 
    $\dad(G_E)=\infty$.
\end{enumerate}
\end{lemma}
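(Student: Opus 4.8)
The plan is to establish both parts by analyzing how precompact open subsets of $G_E$ and the subgroupoids they generate behave, using the basis of compact open bisections $Z(\mu,\nu)$.

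For part \ref{lemma-directed-graph-groupoids-1}, I would first observe that since $G_E\z = E^\infty$ is totally disconnected and locally compact but in general non-compact, $\dad(G_E)\geq 0$ always, so it suffices to show $\dad(G_E)\leq 0$, i.e.\ that the subgroupoid generated by any precompact open $V\subset G_E$ is again precompact (using the \namecref{remark} after \Cref{defn-dad}). Cover $\cl{V}$ by finitely many basic sets $Z(\mu_j,\nu_j)$; then $V$ is contained in a finite union of such bisections, and there is a uniform bound $L$ on the lags $|\nu_j|-|\mu_j|$ and on the lengths $|\mu_j|,|\nu_j|$ appearing. The key point is that when $E$ has no return paths, a path $x\in E^\infty$ cannot satisfy $x\sim_k x$ for any $k\neq 0$: a nonzero-lag self-shift-equivalence would force a return path in $E$. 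Hence every element of the isotropy of $G_E$ is a unit, i.e.\ $G_E$ is principal. Moreover, the absence of return paths means that composing elements of $V$ can only produce elements $(x,k,y)$ with $|k|$ and the "depth" of the initial segments bounded by a constant depending only on $L$ and the finite cover — intuitively, once two bi-infinite paths agree past some coordinate, their tails are rigid, and no return path means the finitely many initial edges cannot be extended or contracted arbitrarily. Thus $\langle V\rangle$ is contained in a finite union of sets $Z(\alpha,\beta)$ with $|\alpha|,|\beta|$ bounded, each of which is compact, so $\langle V\rangle$ is precompact. I would then invoke \Cref{lem:cpctness} as needed to move between closures in $G_E$.

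For part \ref{lemma-directed-graph-groupoids-2}, suppose $E$ has a return path $\mu$ with $r(\mu)=s(\mu)=v$. The idea is to exhibit, for every $d\in\NN$, a precompact open $V\subset G_E$ witnessing that $d$ does not suffice. Consider the infinite path $x=\mu\mu\mu\cdots\in E^\infty$ (using that $E$ has no sources to, if necessary, pre-compose; but $x$ itself is already infinite). The element $g=(x,|\mu|,x)\in G_E$ lies in the isotropy group at $x$, which is infinite cyclic, generated by $g$. Take $V=Z(\mu,\mu\mu)\cup Z(\mu\mu,\mu)\cup Z(\mu,\mu)$ or a similar finite union of basic bisections containing $g$ and $g\inv$ and the relevant units; this $V$ is precompact and open. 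Any open cover $U_0,\dots,U_d$ of $s(V)\cup r(V)$ by subsets of $G_E\z$ must have some $U_i$ containing $x$; then $U_i$ contains a cylinder neighborhood $Z(\mu^n)$ of $x$ for some $n$, so $V\cap\restrict{G_E}{U_i}$ contains $g$ (and $g\inv$), whence $\langle V\cap \restrict{G_E}{U_i}\rangle$ contains the full cyclic group $\{(x,k|\mu|,x):k\in\ZZ\}$, which is not precompact because its closure would have to be a compact subset of $G_E$ meeting the discrete fiber $\{x\}\times\ZZ\times\{x\}$ in an infinite set. Hence no $d$ works and $\dad(G_E)=\infty$.

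The main obstacle I anticipate is the bookkeeping in part \ref{lemma-directed-graph-groupoids-1}: one must argue carefully that "no return paths" genuinely forces a global bound on the combinatorial complexity ($|k|$, initial segment lengths) of elements in $\langle V\rangle$, rather than just of elements of $V$ itself — the subtlety being that products of elements of $V$ could a priori escape to arbitrarily large lag. The cleanest way around this is probably to show directly that $G_E$ with no return paths is already a \emph{proper} groupoid, or even an increasing union of compact open subgroupoids: shift equivalence with no return paths means the lag is uniquely determined once it exists and the "tails agree" relation is an equivalence with all classes behaving rigidly, so that $\restrict{G_E}{K}$ is compact for every compact $K\subset E^\infty$, by an argument parallel to the one in \Cref{ex: transformation}. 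Properness immediately gives $\dad(G_E)=0$, and I would structure the proof of \ref{lemma-directed-graph-groupoids-1} around establishing properness.
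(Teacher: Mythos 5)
Your part \ref{lemma-directed-graph-groupoids-2} is correct and is essentially the paper's argument: once some $U_i$ contains $\mu^\infty$, the generated subgroupoid contains all powers $(\mu^\infty,n|\mu|,\mu^\infty)$, which form an infinite closed discrete subset of the fibre over $(\mu^\infty,\mu^\infty)$ and hence cannot be precompact.

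Part \ref{lemma-directed-graph-groupoids-1}, however, has a genuine gap, and the repair you propose does not work. The crux --- that products of elements of $V$ cannot escape to arbitrarily large lag or merge depth --- is exactly what needs proof, and your ``once two paths agree past some coordinate their tails are rigid'' sentence does not supply it. More seriously, your proposed clean route, namely that a row-finite graph with no sources and no return paths has a \emph{proper} groupoid (so that $\restrict{G_E}{K}$ is compact for every compact $K\subset E^\infty$), is false. Consider the stationary Bratteli-diagram graph with vertices $v_0,v_1,v_2,\dots$ and two parallel edges with range $v_n$ and source $v_{n+1}$ for every $n$: it is row-finite with no sources and no return paths, and $Z(v_0)\cong\{0,1\}^{\NN}$ is compact, but $\restrict{G_E}{Z(v_0)}$ is the tail-equivalence groupoid underlying the CAR algebra and is not compact --- the elements $(x_n,0,y)$, where $y$ is constant and $x_n$ disagrees with $y$ in exactly the first $n$ coordinates, have no convergent subnet because the merge depth tends to infinity. (The graph $F$ of \Cref{ex-CaH} is another counterexample, using the two parallel edges $f'_{n,0},f'_{n,1}$.) The analogy with \Cref{ex: transformation} breaks down because there the quotient groupoid was proper with closed orbits, whereas acyclic graph groupoids are typically AF groupoids with dense orbits, hence never proper. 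So properness cannot be the mechanism; what is true, and what the paper proves, is only that $\langle V\rangle$ is precompact: cover $\cl{V}$ by finitely many cylinders $Z(\mu,\nu)$, let $V_F$ be the finite set of vertices occurring on these paths, observe that acyclicity forces the set of finite paths all of whose vertices lie in $V_F$ to be finite, and show by induction on word length that every product of elements of $V$ lies in some $Z(\alpha,\beta)$ with $\alpha,\beta$ such paths, so that $\langle V\rangle$ is contained in a finite union of compact cylinders. Your one-clause alternative --- exhibiting $G_E$ as an increasing union of compact open subgroupoids --- would also suffice, but establishing it requires essentially the same combinatorial induction, which your proposal does not carry out.
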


\begin{proof}For~\ref{lemma-directed-graph-groupoids-1}, suppose that $E$ has no return paths. Fix an open, precompact  subset $K$ of $G_E$. Without loss of generality, we can replace $K$ by $K\cup K\inv $.  Let $U_0=E^\infty$. There exists a finite subset $F$ of the fibered product $E^*\times_s E^*\coloneqq\{(\mu,\nu)\in E^*\times E^*: s(\mu)=s(\nu)\}$ 
such that $K\subset \cup_{(\mu,\nu)\in F} Z(\mu,\nu)$. Without loss of generality, we may assume that $(\mu,\nu)\in F$ implies $(\nu,\mu)\in F$. For a path $\mu$, write $V(\mu)$ for the subset of vertices on $\mu$. Set $V_F=\cup\{V(\mu) : (\mu,\nu)\in F\}$ and let $E_F$ be the subgraph of $E$ of paths with vertices in $V_F$.

Since there are no return paths in $E$, the set $\{(\alpha,\beta)\in E_F^*\times_s E_F^*\}$ is finite.  We claim that the groupoid 
\[
G_K=\bigcup_{n\in\NN}\{k_{1}\cdots k_n : k_i\in K\text{ for } 1\leq i\leq n\}
\]
generated by $K$ is contained in the compact subset
\begin{equation}\label{eq-union}
\bigcup_{\{(\alpha,\beta)\in E_F^*\times_s E_F^*\}}Z(\alpha,\beta)
\end{equation}
of $G_E$.  We prove the claim by induction on the number $n$ of products.  Let $n\geq 1$ and assume that any product $k_{1}\cdots k_n$ with $k_i\in K$ is in the union at \eqref{eq-union}. Now consider a product $l_{1}\cdots l_i\cdots l_{n+1}$  with $l_i\in K$. By the induction hypothesis there exist $(\eta,\zeta)\in E_F^*\times_s E_F^*$ and $x\in E^\infty$ such that $l_{1}\cdots l_n=(\eta x, |\eta|-|\zeta|, \zeta x)$. Since $l_{n+1}\in K$, there exists $(\mu, \nu)\in F$ such that $l_{n+1}\in Z(\mu,\nu)$. Thus, $\mu, \nu\in E_F^*$  and there exists $y\in E^\infty$ such that $l_{n+1}=(\mu y, |\mu|-|\nu|, \nu y)$. We have \[l_{1}\cdots l_nl_{n+1}=(\eta x, |\eta|-|\zeta|, \zeta x)(\mu y, |\mu|-|\nu|, \nu y).\]
Thus, $\zeta x=\mu y$.  Suppose that $|\mu|\geq |\zeta|$. Then there exists $\mu'\in E_F^*$ such that $\mu=\zeta\mu'$ and $x=\mu'y$. 
Now
\begin{align*}
(\eta x, |\eta|-|\zeta|, \zeta x)(\mu y, |\mu|-|\nu|, \nu y)&=(\eta\mu' y, |\eta|-|\zeta|, \mu y)(\mu y, |\mu|-|\nu|, \nu y)\\
&=(\eta\mu' y, |\mu|-|\nu|+|\eta|-|\zeta|, \nu y).
\end{align*}
Now notice that both $\eta\mu'$ and $\nu$ are in $E_F^*$. Thus, $l_{1}\cdots l_nl_{n+1}$ is in the union at \eqref{eq-union},  which proves the claim. This shows that $\dad(G_E)=0$. 

\medskip

For~\ref{lemma-directed-graph-groupoids-2}, suppose that $E$ has a return path $\mu$.  We argue by contradiction: suppose that $G_E$ has finite dynamic asymptotic dimension $d$. The length $|\mu|$ of $\mu$ is at least 1. We write $\mu^\infty$ for the infinite path obtained by concatenating $\mu$ with itself infinitely many times. Then $(\mu^\infty, |\mu|,\mu^\infty)\in G_E$.  Let $K$ be an open, precompact subset of $G_E$ containing $(\mu^\infty, |\mu|,\mu^\infty)$. There exist open neighborhoods $U_0,\dots, U_d$ which cover $r(K)\cup s(K)$ and the groupoid $G_i$ generated  by $\{g\in K : s(g)=r(r)\in U_i\}$ is  precompact. 
Without loss of generality,  $\mu^\infty\in U_0$. Then for each $n\geq 1$ the composition $(\mu^\infty, n|\mu|,\mu^\infty)$ of $(\mu^\infty, |\mu|,\mu^\infty)$ with itself $n$ times is in $G_0$.
But $\{(\mu^\infty, n|\mu|,\mu^\infty)\}_{n\geq 1}$ has no convergent subsequence, which is a contradiction. 
\end{proof}

\begin{example}\label{ex-CaH}  Let $E$ be the graph of Figure~\ref{graph E} which was considered in \cite[Example~1 in \S7]{Clark-anHuef-RepTh}:
\begin{figure}[!htb]
\[
\xymatrix{\overset{v_0}\bullet&&\overset{v_{1}}\bullet\ar[ll]&&\overset{v_{2}}\bullet\ar[ll]&&\dots\ar[ll]\\
\overset{w_{0,0}}\bullet\ar@(dr,dl)^{e_{0,0}}\ar@/^/[u]^{f_{0,0}}\ar@/_/[u]_{f_{0,1}}
&&\overset{w_{1,0}}\bullet\ar@/^/[u]^{f_{1,0}}\ar@/_/[u]_{f_{1,1}}\ar@/_/[d]_{e_{1,1}}
&&\overset{w_{2,0}}\bullet\ar@/^/[u]^{{f}_{2,0}}\ar@/_/[u]_{{f}_{2,1}}
\ar@/_/[ld]_{e_{2,2}}&&\dots\\
&&\underset{w_{1,1}}\bullet\ar@/_/[u]_{e_{1,0}}&\underset{w_{2,1}}\bullet
\ar@/_/[rr]_{e_{2,1}}&&\underset{w_{2,2}}\bullet\ar@/_/[lu]_{e_{2,0}}&\dots
}
\]
\caption{The graph $E$}\label{graph E}
\end{figure}
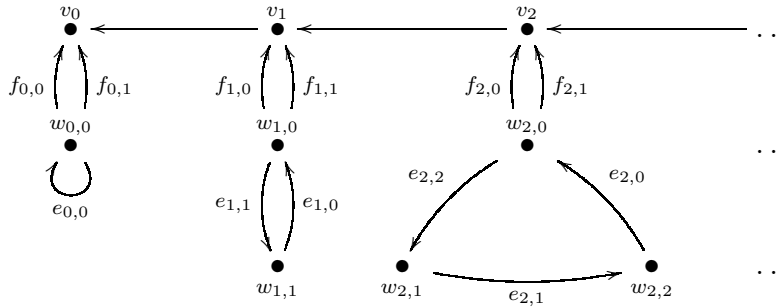
We write  $ G_E$ for the graph groupoid and identify its $\Cst$-algebra  with the $\Cst$-algebra $\Cst(E)$ of the graph.   
We know a lot about $\Cst(G_E)$: it has bounded trace but is not a Fell algebra by \cite[p.\ 188]{Clark-anHuef-RepTh}, hence does not have continuous trace.  Since $E$ has return paths, $\Cst( G_E)$ is not AF by \cite[Theorem 2.4]{KumjianPaskRaeburn-graphs}, and since the return paths do not have entries, $\Cst( G_E)$ is AF-embeddable by \cite[Theorem 1.1]{Schafhauser-AFE}.  Here we will use  \Cref{non-principal groupoids}  to show that $\Cst( G_E)$ has nuclear dimension $1$. 

For $x\in E^{\infty}$, we write  $\Aa_x=\{ g\in G_E : r(g)=x=s(g) \}$ for the stability subgroup at $x$. 
Let $\alpha^{k,n}$ be a return path of length $n+1$ with range and source some $w_{n,k}$. If  $x_{\alpha^{k,n}}=\alpha^{k,n}\alpha^{k,n}\ldots $ then the stability subgroup at $x_{\alpha^{k,n}}$ is isomorphic to  $(n+1)\ZZ$; all other stability subgroups are trivial.  
The stability subgroups vary continuously and the
orbits are closed.

We first claim that the topological dimension of $\widehat{\Aa} $ is $1$. We write $\widehat{\Aa}_x$ for the dual of the stability subgroup $\Aa_x$. Since  $E^\infty$ is  a countable set,
\[
\widehat{\Aa} =\bigcup_{x\in E^\infty} \widehat{\Aa}_{x}\times\{x\}
\]
is a countable union of closed subsets. Each $\widehat{\Aa}_{x}\times\{x\}$ is either $\{(0,x)\}$ or homeomorphic to $\TT$. Since $\TT$ has topological dimension $1$,  the countable-union theorem \cite[Chapter 2, Theorem~2.5]{Pears:dim} or \cite[Theorem~1.7.1]{Coornaert} gives $\dim(\widehat{\Aa} )=1$, as claimed.  
\goodbreak

Second, we claim that the dynamic asymptotic dimension of $\Rr\coloneqq G_E/\Aa$ is zero.
To see this, let $F$ be the graph shown in Figure~\ref{graph F}.  
\begin{figure}[!htb]
\[\xymatrix{\overset{v'_0}\bullet&&\overset{v'_{1}}\bullet\ar[ll]&&\overset{v'_{2}}\bullet\ar[ll]&&\dots\ar[ll]\\
\overset{w'_{0,0}}\bullet\ar@/^/[u]^{f'_{0,0}}\ar@/_/[u]_{f'_{0,1}}
&&\overset{w'_{1,0}}\bullet\ar@/^/[u]^{f'_{1,0}}\ar@/_/[u]_{f'_{1,1}}&&\overset{w'_{2,0}}\bullet
\ar@/^/[u]^{f'_{2,0}}\ar@/_/[u]_{f'_{2,1}}&&\dots\\
\bullet\ar[u]^{e'_{0,0}}&&\overset{w'_{1,1}}\bullet\ar[u]^{e'_{1,0}}&&\overset{w'_{2,1}}\bullet
\ar[u]^{e'_{2,0}}&&\dots\\
\bullet\ar[u] &&\bullet\ar[u]^{e'_{1,1}}&&\overset{w'_{2,2}}\bullet
\ar[u]^{e'_{2,1}}&&\dots\\
\vdots\ar[u]&&\vdots\ar[u] &&\vdots\ar[u]^{e'_{2,2}}}\] 
\caption{The graph $F$}\label{graph F}
\end{figure}
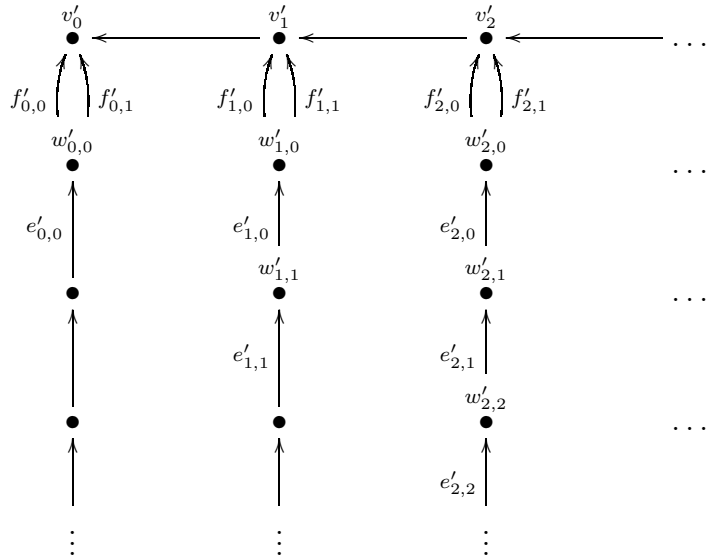

 \noindent Consider the subset  \[
 U=\bigcup_{i\geq 0}\big(Z(v'_i)\cup_{j\leq i} Z(w'_{i,j})\big)
 \] 
 of $F^\infty$. Notice that $U$ is both open and closed. (But $U$ is not an invariant subset. To see this, let $x$ be the infinite path ending with the edge  $f'_{0,0}$. Let $\sigma$ be the shift map on $E^\infty$.  Then $(x, 2,\sigma^2(x))\in G_F$, 
 its range $x$ is in $U$ but its source $\sigma^2(x)$ is not in $U$.) 

By \cite[p.~189]{Clark-anHuef-RepTh}, $\Rr$ is isomorphic to the reduction $(G_F)|_U$ to $U$ of  $G_F$. In particular, $\Rr$ is \etale\ because $U$ is open and because $G_{F}$ is \etale.
Since $F$ is row-finite with no sources or return paths, $G_F$  has dynamic asymptotic dimension  $0$ by \Cref{lemma-directed-graph-groupoids}.  Since $U$ is both open and closed, it follows that the dynamic asymptotic dimension of $(G_F)|_U$ is also $0$ by \Cref{lem:dad of restriction'}. Thus, $\dad(\Rr)=0$ 
as well. Now \Cref{non-principal groupoids} applies to give $\dim_\nuc(\Cst(E))\leq 2\cdot 1-1=1$; we know it is at least $1$ since $\Cst(E)$ is not AF.

\end{example}

\appendix

\section{Notation Index}
\[\begin{array}{cl}
    G\z &  \text{unit space of } G\\
    r\colon G\to G\z & \text{range map} \\
    s\colon G\to G\z & \text{source map}\\
    G\comp  & \text{composable pairs in } G\\
    \langle X\rangle & \text{subgroupoid generated by } X\\
    \restrict{G}{W} &
\{e\in G : s(e), r(e)\in W\subset G\z \}\\
{[x]}& \text{orbit of } x\in G\z \\
xG & \{e\in G : r(e)=x\}\\
Gx & \{e\in G : s(e)=x\}\\
C_c(G) & \text{compactly supported } \CC\text{-valued functions on } G\\
\sigma^{x} & \text{measure in left Haar system for } x\in G\z \\
\sigma_x(U) & \sigma^x(U\inv )\\
\Cst(G,\sigma) & \text{full } \Cst\text{-algebra of } G \text{ with respect to Haar system } \sigma\\
\Cst_\red(G,\sigma) & \text{reduced } \Cst\text{-algebra of } G \text{ with respect to Haar system } \sigma\\
(E, \iota, \pi)& \text{ a twist $G\z\times\TT \overset{\iota}{\to} E\overset{\pi}{\to} G$, see \Cref{def:twist2}.}\\
C_c(E;G)& \text{$f\in C_c(E)$ such that $f(z\cdot e)=zf(e)$ for  $z\in\TT$ and $e\in E$}\\ & \text{for a twist  $(E, \iota, \pi)$  over  $G$} \\
\Cst(E; G) & \text{full  twisted groupoid } \Cst\text{-algebra for a twist  $(E, \iota, \pi)$  over  $G$}\\
\Cst_\red(E; G) & \text{reduced twisted groupoid } \Cst\text{-algebra for a twist  $(E, \iota, \pi)$  over  $G$} \\
C_{K}(E;G) & \{f\in C_c(E;G):\supp f\subset K \} \text{ where } K\subset E\\
X_+ & \text{Alexandrov one-point-compactification of } X\\
\widetilde{A} & \text{minimal unitization of a } \Cst\text{-algebra } A\\
\cG & \text{Alexandrov groupoid for a groupoid } G \text{, see \Cref{lem:etale:tilde G:topologically'}}\\
(\widetilde{E},\widetilde{\iota},\widetilde{\pi}) & \text{Alexandrov twist for a twist } (E,\iota,\pi)
    \text{, see \Cref{tilde E' v2}}\\
\text{[}f,h\text{]} & \text{commutator, as in Equation } \eqref{eq: comm}\\
\end{array}\]

\printbibliography

\end{document}